 \newtheorem{thm}{Theorem}[section]
 \newtheorem{lem}[thm]{Lemma}
 \newtheorem{prop}[thm]{Proposition}
 \theoremstyle{definition}
 \theoremstyle{remark}
 \newtheorem{rem}[thm]{Remark}
 \numberwithin{equation}{section}
\DeclareMathOperator*\dif{\mathop{}\!\mathrm{d}}
\newcommand{\R}{\mathbb{R}}
\begin{document}

\title{Threshold solutions for nonlocal reaction diffusion equations}

\author[H. Zhang, Y. Li and X. Yang]{He Zhang\affil{1,3},
                                        Yong Li\affil{1,2}\comma\corrauth~and Xue Yang\affil{1,2} }
\address{\affilnum{1}\ School of Mathematics,
                       Jilin University,
                       Changchun, Jilin, 130012, P.R. China.\\
         \affilnum{2}\ School of Mathematics and Statistics,
                       Center for Mathematics and Interdisciplinary Sciences,
                       Northeast Normal University,
                       Changchun, 130024, P.R. China.\\
         \affilnum{3}\ Tianyuan Mathematical Center in Northeast China,
                       Jilin University,
                       Changchun, Jilin, 130012, P.R. China.\\
}

\emails{{\tt he\_zhang@jlu.edu.cn} (H.~Zhang),
        {\tt liyong@jlu.edu.cn}  (Y.~Li),
        {\tt xueyang@jlu.edu.cn}  (X.~Yang)  }

\begin{abstract}
We study the Cauchy problem for nonlocal reaction diffusion equations with bistable nonlinearity in 1D spatial domain and investigate the asymptotic behaviors of solutions with a one-parameter family of monotonically increasing and compactly supported initial data. We show that for small values of the parameter the corresponding solutions decay to 0, while for large values the related solutions converge to 1 uniformly on compacts. Moreover, we prove that the transition from extinction (converging to 0) to propagation (converging to 1) is sharp. Numerical results are provided to verify the theoretical results.
\end{abstract}

\ams{35K57, 35B40}    
\keywords{Nonlocal reaction diffusion equation, asymptotic behaviors, threshold solution, sharp transition.}    

\maketitle

\section{Introduction}
Reaction-diffusion equation
\begin{equation}\label{17}
  u_t=D\triangle u+ru(1-u),
\end{equation}
was first proposed by Fisher \cite{fisher1937wave} and  Kolmogorov, Petrovskii and Piskunov \cite{kolmogorov1937etude} in 1937 to describe the spatial spread of an advantageous allele in a given species (here $D$ is the diffusion constant and $r$ is the growth rate of the species).

 Since then, reaction-diffusion equations have attracted considerable attention, as they have proved to give an accurate description of various natural phenomena, ranging from heat and mass transfer, chemical kinetics, population dynamics, neuroscience and biomedical processes to spatial ecology. For more details on reaction-diffusion equations and their applications, the interested reader is
referred to  \cite{volpert2014elliptic} and the references therein.

In this paper, we  consider the following nonlocal reaction-diffusion equation
\begin{equation}\label{1}
   u_t=J*u-u-f(u),~x\in\mathbb{R},~t\geq0,
\end{equation}
where $J*u=\int_{\mathbb{R}}J(x-y)u(t,y)\,{\dif}y$.  This type of equations arises in applications where there exists nonlocal interaction between the variables involved. In population dynamics models, for instance, there are many situations when there exists long-range mechanism (such as the individuals are competing for a resource which
can redistribute itself, etc.) which is important in the process of evolution described (cf.~\cite{furter1989local,Gourley2000,coville2007non}). In this context, the value of the population density $u$ at a certain point $x$ may depend explicitly not only on the value of the function at $x$ but also on the values at other points: let $J(x-y)$ be the probability distribution of rates jumping from location $y$ to location $x$, then $J*u$ is the rate at which individuals are arriving at position $x$ from all other places and $-u(t,x)$ is the rate at which they are leaving location $x$ to all other locations (cf.~\cite{fife2003some}).

A number of attempts were made to use nonlocal reaction-diffusion models similar to \eqref{1} in other fields such as material science \cite{CHMAJ2000135},  solid phase transitions \cite{bates1997traveling,duncan_grinfeld_stoleriu_2000,0951-7715-14-2-303,orlandi1997travelling}, and finite scale microstructures in nonlocal elasticity \cite{Ren2000}. In particular, the relationship between \eqref{1}
and lattice models
\begin{equation}\label{37}
  u_t=D[u(x+1,t)-2u(x,t)+u(x-1,t)]-f(u).
\end{equation}
has been explored in \cite{bates1999discrete}.

There is an extensive literature on other problems involving nonlocal terms of a different type.
See, for instance, \cite{orlandi1997travelling,de1995travelling,DeMasi1993} for the continuum Ising model
\[
  u_t=\tanh(\beta J*u-h)-u,
\]
where $\beta>1$ and $h$ are constants, and \cite{caffarelli2010variational,Caffarelli2008,CPA21379,Chen2006} for nonlocal models involving the fractional Laplacian.

Throughout this paper, we make the following assumptions:\\
\textbf{(H1)} $J\in C^1(\mathbb{R}),~J\geq0,~ J(x)=J(-x)$~{for all}~$x$,~$\int_{\mathbb{R}}J\,{\dif} x=1$,~$J'\in L^1(\mathbb{R})$,~$\int_{\mathbb{R}}J(x)|x|\,{\dif} x<\infty$.\\
\textbf{(H2)} $f\in C^2(\mathbb{R})$,~$f(s)>0$ for $0<s<\alpha$, $f(s)<0$ for $\alpha<s<1$, $f(0)=f(1)=0,~f'(0)>0,~f'(1)>0$,~$1+f'(s)>0$~for $s\in(0,1)$,~$f$~{has only one zero}~$\alpha$~{in}~$(0,1)$,~{and no zeros outside}~$[0,1]$.

A prototypical example of such function is $f(s)=s(s-1)(s-\alpha)$ with $\alpha\in(0,1)$. Such equations play a fundamental role in population ecology. In addition, we assume that\\
\textbf{(H3)} $\int_0^1 f(s)\,{\dif} s<0$, and define $\beta$ by $\int_0^{\beta} f(s)\,{\dif} s=0.$

%

We should mention that the Laplacian operator $\triangle u$ in the classical reaction-diffusion equation
\begin{equation}\label{71}
  u_t=\triangle u
\end{equation}
can also be written as the $L^2$ gradient flow of the energy functional
\begin{equation}
  E_0[u]:=\frac{1}{2}\int_{\mathbb{R}}|\nabla u|^2\,{\dif} x,
\end{equation}
which measures how much $u$ deviates from being constant. Then $E_0[u] \geq 0$ and $E_0[u] = 0$ if and only if $u\equiv$ constant.

Consider the similar functional
\begin{equation}\label{36}
E_1[u]:=\frac{1}{4}\iint_{\mathbb{R}^2}J(x-y){\big(u(t,x)-u(t,y)\big)}^2\,{\dif} x{\dif} y.
\end{equation}
It is easy to check that $E_1[u]\geq0$ and $E_1[u] = 0$ only
when $u\equiv$ constant. Moreover, if $J$ satisfies \textbf{(H1)}, then $J*u-u$ is the $L^2$ gradient flow of \eqref{36}. In the sense that $E_0$ is analogous to $E_1$, the operator $J*u-u$ shares some essential properties with the Laplacian operator $\triangle u$, such as a form of the maximum principle (cf.~\cite{bates1997traveling,fife1998convolution,orlandi1997travelling}).

When we incorporate the nonlinear term $f(u)$, it is easy to check that \eqref{1} is the $L^2$ gradient flow of the energy functional
\begin{equation}
  E[u]:=\frac{1}{4}\iint_{\mathbb{R}^2}J(x-y){\big(u(t,x)-u(t,y)\big)}^2\,{\dif} x{\dif} y+\int_{\mathbb{R}} F(u)\,{\dif} x,
\end{equation}
where $F(u)=\int_0^uf(s)\,{\dif}s$.

There has been substantial research effort devoted to nonlocal reaction-diffusion equations of this kind. The existence, uniqueness and stability of monotone travelling wave solutions of \eqref{1} with bistable nonlinearity in one space dimension was investigated by Bates~ \cal{et~al} \cite{bates1997traveling}, Carr-Chmaj \cite{carr2004uniqueness}, Covillel-Dupaigne \cite{coville2007non} and Sun \cal{et~al} \cite{sun2011entire}. For higher dimensions, properties of solutions of nonlocal reaction diffusion equations with bistable nonlinearities were studied by Fife-Wang \cite{fife1998convolution}. Deriving the properties of solutions of non-autonomous nonlocal reaction diffusion equations is another challenge. We refer the readers to \cite{lim2015transition} and references therein for works of the reaction diffusion equations with spatially inhomogeneous nonlinearities $f=f(x,u)$.

In this paper, we study the asymptotic behaviors of solutions of the Cauchy problem for \eqref{1} with initial value
\begin{equation}\label{16}
u(0,x)=1_{[-L,L]}(x).
\end{equation}
The system \eqref{1}-\eqref{16} exhibits some interesting dynamics. By \textbf{(H2)}, the system has two constant steady states, 0 and 1. We are interested in the size of domains of attraction of the two constant steady states and the transition from one to the other when the initial conditions are varied. We are also interested in whether there exists some initial values for which the corresponding solutions (referred to as threshold solutions) do not converge to 0 or 1.

Threshold solution problems are difficult even for the classical diffusion. They were first put forth by Kanel' in \cite{kanel1964stabilization} for ignition nonlinearities in the context of reaction diffusion equation
\begin{equation}\label{38}
  u_t=u_{xx}+f(u),
\end{equation}
with the initial condition \eqref{16}, where $x\in\mathbb{R}$ and the nonlinearity $f$ is Lipschitz and satisfies \textbf{(H2)}. Kanel' proved that there exists some $L_0$ and $L_1$ such that if $L<L_0$, then $u(t,x)$ decays to 0 as $t\rightarrow\infty$; and if $L>L_1$, then the solution $u(t,x)$ converges to 1 as $t\rightarrow\infty$ uniformly on compacts. Recently, Zlato\v{s} \cite{zlatovs2006sharp} proved that $L_0=L_1$ and the transition is sharp: there is exactly one critical value (threshold value), and if the initial datum equals the threshold value, $u(t,x)$ converges to some stationary waves of \eqref{17}. Du and Matano \cite{du2010convergence} extended the results of \cite{zlatovs2006sharp} to more general families of initial data. In \cite{polacik2011threshold}, Pol{\'a}\v{c}ik addressed the question for nonautonomous parabolic equations on $\mathbb{R}^N$.

In this paper, we prove the similar results for \eqref{1}. To the best of our knowledge, this is the first study of threshold solutions in the context of nonlocal reaction-diffusion equations. We state our results in the following theorem, which will be proved in Section 3.
\begin{thm}\label{main}
Suppose that $J$ and $f$ satisfy \textbf{(H1)} and \textbf{(H2)}. Let $u^L\colon[0,\infty)\times\mathbb{R}\rightarrow[0,1]$ be the solution of \eqref{1} and \eqref{16}. Then there exists some $L^*>0$ such that

(1) if $L<L^*$, then $u^L\rightarrow0$ uniformly on $\mathbb{R}$ as $t\rightarrow\infty$;

(2) if $L=L^*$, then $u^{L^*}(t_n,\cdot)$ has a subsequence that converges to some $U$ uniformly in $x$ as $t_n\rightarrow\infty$, where $U$ is some solution of
\begin{equation}\label{8}
\begin{split}
   & J*U-U+f(U)=0 \\
    & U(0)=\beta;
\end{split}
\end{equation}

(3) if $L>L^*$, then $u^L\rightarrow1$ uniformly on compacts as $t\rightarrow\infty$.
\end{thm}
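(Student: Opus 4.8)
The plan is to follow the classical strategy of Kanel'--Zlato\v{s}--Du--Matano, adapted to the nonlocal setting by systematic use of the comparison principle and the energy functional $E$. The key structural facts I will rely on are: (i) a comparison (maximum) principle for \eqref{1}, which holds since $1+f'(s)>0$ and $J\ge 0$, so that $u^L$ is monotone nondecreasing in $L$ and $0\le u^L\le 1$; (ii) monotonicity of $t\mapsto E[u^L(t,\cdot)]$ along the flow, since \eqref{1} is the $L^2$ gradient flow of $E$; and (iii) $C^1$-type regularity/compactness estimates for orbits, coming from the fact that $J*u-u$ is a bounded operator on $L^\infty$ and $f$ is smooth, so that $u_t$ and the spatial modulus of continuity of $u^L(t,\cdot)$ are uniformly controlled for $t\ge 1$.

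\textbf{Step 1 (the two regimes are nonempty and define $L^*$).} For small $L$, I would construct a supersolution that decays to $0$: using \textbf{(H2)} near $s=0$ one has $f(s)\ge -Cs$ is false in general, but $f(s)>0$ for $0<s<\alpha$, so any solution starting below a small constant $\delta<\alpha$ is a subsolution of the linear-free decay; more robustly, I would exhibit a stationary or time-decaying supersolution of ``bump'' type with small mass — the standard device is to use that $\int_{\mathbb R} F(u)\,{\dif}x$ can be made positive and dominant when the support and height are small, forcing $E[u^L(t)]\to 0$, hence (by coercivity of $E_1$ modulo constants plus the boundary condition at infinity) $u^L(t,\cdot)\to 0$ uniformly. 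For large $L$, I would use the existence of a monotone bistable traveling wave connecting $0$ and $1$ (available for \eqref{1} under \textbf{(H1)}--\textbf{(H3)}; cited from Bates et al., Carr--Chmaj, Coville--Dupaigne): take $L$ so large that $1_{[-L,L]}$ sits above a pair of back-to-back shifted subsolutions built from the wave, and conclude $u^L\to 1$ on compacts. Then set
\[
L^* := \sup\{\,L>0 : u^L(t,\cdot)\to 0 \text{ uniformly as } t\to\infty\,\},
\]
which by the nonemptiness above and comparison is a well-defined positive real number, and one shows the ``extinction set'' of $L$'s is exactly $(0,L^*)$ (openness on the left) while $(L^*,\infty)$ is the ``propagation set'', using that both extinction and propagation are open conditions in $L$ (this openness is where the comparison principle and the instability of $0$ and $1$ are used: if $u^L(t_0,\cdot)$ is small in sup norm, a slightly larger $L'$ still gives something small; if $u^L(t_0,\cdot)$ exceeds a subsolution of the wave, so does $u^{L'}$ with $L'$ slightly larger — and by continuous dependence also slightly smaller in the propagation case).

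\textbf{Step 2 (the threshold solution neither extinguishes nor propagates).} At $L=L^*$ I must rule out both $u^{L^*}\to 0$ and $u^{L^*}\to 1$. Each is excluded precisely because extinction and propagation are \emph{open} in $L$: if $u^{L^*}\to 0$ uniformly, then by continuous dependence on initial data (again using that $J*\cdot-\cdot$ is bounded and $f$ is smooth) there is $\varepsilon>0$ with $u^{L^*+\varepsilon}(T,\cdot)$ still below the basin of attraction of $0$ for some large $T$, contradicting $L^*+\varepsilon>L^*$; symmetrically if $u^{L^*}\to 1$ on compacts, a slightly smaller $L^*-\varepsilon$ would also propagate, contradicting the definition of $L^*$. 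Hence $u^{L^*}(t,\cdot)$ has an $\omega$-limit point that is neither $0$ nor $1$.

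\textbf{Step 3 (identifying the limit and its initial value $\beta$).} Because $E[u^{L^*}(t,\cdot)]$ is nonincreasing and bounded below (here I use \textbf{(H3)}: $\int_0^1 F$ controls the sign so that $E$ is bounded below on the relevant class), the energy dissipation gives $\int_0^\infty \|u_t(t,\cdot)\|_{L^2}^2\,{\dif}t<\infty$; combined with the compactness from Step~0 (uniform spatial equicontinuity and $L^\infty$ bound of $u^{L^*}(t,\cdot)$, plus tightness coming from monotone compactly-seeded data and the wave-type barriers preventing escape), along a sequence $t_n\to\infty$ one extracts a locally uniform limit $U$ with $u_t(t_n,\cdot)\to 0$, so $U$ solves the stationary equation $J*U-U+f(U)=0$. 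It remains to pin down $U(0)=\beta$. Here I would use the symmetry and monotonicity inherited from the data ($u^{L^*}(t,\cdot)$ is even and, by a standard reflection/comparison argument, nonincreasing in $|x|$), so any limit $U$ is even and nonincreasing in $|x|$ with $0\le U\le 1$ and $U\not\equiv 0,1$; then a Hamiltonian / first-integral computation for the stationary nonlocal equation — multiplying by $U'$ and integrating, or using the conserved ``energy density'' associated with $E$ — forces the maximum value $U(0)$ to equal the Maxwell point $\beta$ determined by $\int_0^\beta f=0$ in \textbf{(H3)}. (For local equations this is the familiar fact that a ground state of $u''+f(u)=0$ has amplitude $\beta$; the nonlocal analogue requires a bit more care but follows from the same energy balance $E[U]=E[U]$ stationarity and the behavior at $\pm\infty$.)

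\textbf{Main obstacle.} I expect the hardest part to be Step~3, specifically proving that along the threshold orbit the solution is precompact in the right topology (uniform in $x$, not merely locally uniform) and that \emph{every} $\omega$-limit equilibrium has $U(0)=\beta$ rather than some other value in $(\alpha,1)$. The uniform-in-$x$ convergence claimed in part~(2) is delicate because the nonlocal operator provides no gain of regularity; one must instead exploit monotonicity in $|x|$ plus the fact that $U(x)\to 0$ as $|x|\to\infty$ (itself needing that the threshold solution cannot develop a nonzero plateau at infinity, which uses the barrier from the subsolution wave and the instability of $0$). Pinning $U(0)=\beta$ is the crux: it is exactly the nonlocal counterpart of the Maxwell condition, and I would isolate it as a separate lemma, proving that a nonconstant, even, $|x|$-nonincreasing bounded solution of $J*U-U+f(U)=0$ tending to $0$ at infinity must satisfy $\int_0^{U(0)} f(s)\,{\dif}s = 0$, by integrating the stationary equation against $U_x$ over $\mathbb R$ and carefully handling the resulting double integral involving $J$.
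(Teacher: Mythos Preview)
Your Steps~1--2 (define $L^*$ as the supremum of the extinction set, then use openness of both extinction and propagation in $L$ to trap the threshold orbit) match the paper's architecture. The genuine gap is in Step~3, and it is precisely the point you flag as the crux: the mechanism that produces $U(0)=\beta$.

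Your proposed Hamiltonian/first-integral argument does not go through for the nonlocal equation. In the local case $U''=f(U)$, multiplying by $U'$ and integrating over $(0,\infty)$ gives $-\tfrac12 U'(0)^2=-F(U(0))$, hence $F(U(0))=0$. For $J*U-U=f(U)$ the analogous computation leaves $\int_0^\infty (J*U)\,U'\,{\dif} x$ on the left, which has no useful closed form; if instead you integrate over all of $\mathbb R$ and use that $U$ and $J$ are even, both sides vanish identically and you learn nothing. There is no pointwise first integral for the nonlocal stationary problem, so the amplitude of a pulse is \emph{not} determined by a Maxwell balance on the stationary equation. The paper circumvents this completely by extracting $\beta$ from the dynamics rather than the statics: it first proves (Lemma~2.5) that $t\mapsto u^L(t,0)$ is nonincreasing on some $[0,t^*)$ and nondecreasing on $[t^*,\infty)$, so $u^L_*:=\lim_{t\to\infty}u^L(t,0)$ exists for every $L$; then, by comparison with travelling fronts of equations with \emph{modified} bistable nonlinearities $\tilde f\le f$ and $g\ge f$ (Propositions~3.1 and~3.3), it shows $u^L_*<\beta\Rightarrow$ extinction and $u^L_*>\beta\Rightarrow$ propagation. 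At the threshold this forces $u^L_*=\beta$, and the limit profile inherits $U(0)=\beta$ directly from the orbit. You are missing the eventual-monotonicity lemma for $u(t,0)$, and with it the entire device that delivers $\beta$. A secondary but related issue: the full energy $E[u^{L^*}(t,\cdot)]$ is not obviously finite or bounded along the orbit (there is no spatial decay uniform in $t$, and the nonlocal operator provides no regularity gain), so your dissipation bound $\int_0^\infty\|u_t\|_{L^2}^2\,{\dif}t<\infty$ is not justified as stated; the paper replaces $E$ by a truncated Lyapunov functional $V$ built from a cut-off $w$ of $u$, and the bulk of Proposition~3.5 is devoted to proving $V$ is bounded using the two-front sandwich (Lemma~2.9) and exponential tail bounds on the front (Lemma~2.8).
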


The rest of the paper is organised as follows. We devote Section 2 to a summary of relevant results, including the strong maximum principle and comparison principle, and some basic lemmas. In Section 3, we investigate the asymptotic behaviors of solutions depending on initial data and prove the main result Theorem \ref{main}. Numerical experiments are provided in Section 4 to illustrate our theoretical results. Some concluding remarks are presented in the last section.

\section{Preliminaries}
In this section, we state and prove some lemmas that we will use later.
We begin by considering the following initial value problem (IVP):
\begin{equation}\label{53}
  \begin{split}
     & u_t=J*u-u-f(u),~x\in\R,~t>0,\\
     & u(0,x)=u_0(x),~x\in\R.
  \end{split}
\end{equation}

\begin{lem}[Local well-posedness]  \label{exist}
Let $J$ and $f$ satisfy \textbf{(H1)} and \textbf{(H2)}. For any $0\leq u_0(x)\leq 1$, IVP \eqref{53} has a unique solution $u(t,x;u_0)$ which depends continuously on the initial condition $u_0(x)$. Moreover, $u(t,x;u_0)\in C([0,\infty)\times\R)$, if $u_0(x)\in C(\R)$.
\end{lem}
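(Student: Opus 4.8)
The plan is to recast \eqref{53} as an integral equation and apply the Banach fixed point theorem in a suitable complete metric space. Rewrite the equation as $u_t + u = J*u - f(u)$, which by the variation of constants formula is equivalent to
\begin{equation*}
u(t,x) = e^{-t}u_0(x) + \int_0^t e^{-(t-s)}\bigl[(J*u)(s,x) - f(u(s,x))\bigr]\dd s =: (\Phi u)(t,x).
\end{equation*}
I would work in the Banach space $X = L^\infty(\R)$ (or $C_b(\R)$ when $u_0$ is continuous, noting that $J*u$ and $e^{-t}u_0$ preserve continuity and uniform continuity on compact time intervals). For fixed $T>0$, let $Y_T = C([0,T];X)$ with the sup norm. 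The key estimates are: $\|J*u - J*v\|_\infty \le \|J\|_{L^1}\|u-v\|_\infty = \|u-v\|_\infty$ by \textbf{(H1)}, and $\|f(u)-f(v)\|_\infty \le \mathrm{Lip}(f|_{[0,1]})\,\|u-v\|_\infty$ since by \textbf{(H2)} $f\in C^2$ hence Lipschitz on the bounded interval $[0,1]$. These give $\|\Phi u - \Phi v\|_{Y_T} \le (1-e^{-T})(1+\mathrm{Lip}(f))\,\|u-v\|_{Y_T}$, so $\Phi$ is a contraction once $T$ is small enough; together with the obvious bound keeping the iterates in the ball of radius $1$ around $e^{-t}u_0$, one gets a unique local solution, which then extends globally in time because the a priori bound $0\le u\le 1$ (from the comparison principle, or rather the invariance argument below) prevents blow-up.

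The main obstacle, and the step requiring the most care, is showing that the solution actually satisfies $0 \le u(t,x) \le 1$ for all $t\ge 0$ — this is what makes $f$'s Lipschitz constant on $[0,1]$ the relevant one and what yields global existence. I would prove invariance of $[0,1]$ directly: the constants $0$ and $1$ are stationary solutions (since $f(0)=f(1)=0$ and $J*c - c = 0$ for constant $c$), so one wants a comparison principle. Since the strong maximum principle and comparison principle are stated as being collected in Section 2, I would either invoke them or, to avoid circularity, give a short self-contained argument: for the upper barrier, set $w = 1 - u$ and show $w\ge 0$ is preserved by examining $w_t = J*w - w + f(1-w)$ and using that $f(1-w)/w$ stays bounded near $w=0^+$ because $f(1)=0$, $f\in C^1$; a standard Gronwall/penalization argument on $\inf w$ or on $\int (w^-)^2$ then closes it. The lower barrier $u\ge 0$ is analogous using $f(0)=0$. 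One subtlety worth a remark: the nonlocal term $J*u$ is not a local operator, so the classical "first touching point" argument for the maximum principle needs the variant appropriate to nonlocal equations (at a point where $u-1$ attains a new positive maximum, $(J*(u-1))(x) \le (u-1)(x)\int J = (u-1)(x)$, giving the needed sign), which is precisely the maximum-principle property of $J*u-u$ alluded to in the introduction.

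Finally, for continuous dependence on $u_0$: if $u,v$ solve \eqref{53} with data $u_0,v_0$, subtracting the integral equations and applying the same Lipschitz estimates gives, via Gronwall, $\|u(t)-v(t)\|_\infty \le e^{(1+\mathrm{Lip}(f))\,t}\,\|u_0-v_0\|_\infty$ — uniform on compact time intervals — which is the asserted continuous dependence, and in particular uniqueness. The preservation of continuity of the solution when $u_0\in C(\R)$ follows since each $\Phi$-iterate lies in $C([0,T]\times\R)$: $e^{-t}u_0$ is continuous, $J*u$ is continuous jointly in $(t,x)$ when $u$ is (dominated convergence plus continuity of translation in the integral), and the time integral of a continuous integrand is continuous; the uniform (in $t\in[0,T]$) convergence of the iterates then transfers continuity to the limit.
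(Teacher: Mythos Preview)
Your proposal is correct and follows essentially the same route as the paper: recast \eqref{53} as an integral equation via variation of constants, apply the contraction mapping theorem in $L^\infty$ (or $C_b$), and deduce continuous dependence by subtracting and Gronwall. The only cosmetic difference is that the paper chooses the integrating factor $e^{-\mu t}$ with $\mu=\max_{[0,1]}|f'|+1$ rather than your $e^{-t}$, and is considerably more terse about invariance of $[0,1]$ and the continuity assertion---points on which your discussion is in fact more careful than the original.
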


\begin{proof}
This closely follows the proof of Theorem 2.3 from \cite{sun2011entire}. We write IVP \eqref{53} in integral form as
\begin{equation*}
  u(t,x)=u_0(x)e^{-\mu t}+\int_{0}^te^{-\mu (t-s)}(J*u-u-f(u)+\mu u)\,{\dif}s,
\end{equation*}
where $\mu=\max\limits_{u\in[0,1]}|f'(u)|+1$. The local existence and uniqueness follow from the contraction mapping theorem.

 Now we prove the continuous dependence of $u(t,x;u_0)$ on $u_0$. Let $u(t,x;u_0)$ and $v(t,x;v_0)$ be the solution of IVP \eqref{53} with initial values $u(0,x)=u_0$ and $v(0,x)=v_0$, respectively. Then
\begin{equation*}
\begin{split}
  &|u(t,x)-v(t,x)|\leq |u_0-v_0|e^{-\mu t}+(\mu+2+M)\int_0^t e^{-\mu (t-s)}\|u(s,\cdot)-v(s,\cdot)\|_{L^{\infty}(\mathbb{R})}\,{\dif}s,
  \end{split}
\end{equation*}
where $M:=\max\limits_{u\in[0,1]}|f'(u)|$. The continuous dependence can be easily proved by Gronwall's inequality.
\end{proof}

The following lemma was first proved by F. Chen \cite{Chen2002807}.
\begin{lem}\label{max}
Let $J$ satisfy \textbf{(H1)}. Assume that, for some $T>0$, $u\in C^1([0,T],L^{\infty}(\mathbb{R}))$ is the solution of
\begin{equation}\label{56}
  \begin{split}
     & u_t\geq J*u-u-h(t,x)u,~(t,x)\in(0,T]\times\mathbb{R},\\
     & u(0,x)=u_0(x),x\in\mathbb{R},\\
  \end{split}
\end{equation}
 where $h(t,x)\in L^{\infty}([0,T]\times\mathbb{R})$. If $u_0(x)\geq0$ for almost all $x\in\mathbb{R}$, then $u(t,x)\geq0$ for almost all $x\in\mathbb{R}$ and $t\in[0,T]$. Moreover, if $u(t,x)$ is bounded and uniformly continuous on $(t,x)\in[0,T]\times\mathbb{R}$, then either $u\equiv0$ or $u>0$ on $(0,T]\times\mathbb{R}$.
\end{lem}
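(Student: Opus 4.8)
The plan is to treat the two assertions separately, both by reformulating the differential inequality in Duhamel form with respect to the diagonal part $-u$ of the operator and then applying Gronwall. Since $u\in C^1([0,T],L^\infty(\R))$, after fixing a representative of $\partial_t u\in C([0,T],L^\infty(\R))$ we may regard $t\mapsto u(t,x)$ as $C^1$ for every $x$, with $u(t,x)=u_0(x)+\int_0^t\partial_t u(s,x)\dd s$. Writing the differential inequality in \eqref{56} as $\partial_t u+(1+h)u\ge J*u$ and multiplying by the integrating factor $e^{\Phi(t,x)}$, $\Phi(t,x):=\int_0^t\big(1+h(\tau,x)\big)\dd\tau$, gives $\partial_t\!\big(e^{\Phi}u\big)\ge e^{\Phi}\,J*u$, hence
\[
 u(t,x)\ \ge\ e^{-\Phi(t,x)}\Big(u_0(x)+\int_0^t e^{\Phi(s,x)}(J*u)(s,x)\dd s\Big).
\]
With $m(t):=\operatorname{ess\,inf}_{x}u(t,x)$ and $m_-(t):=\min(m(t),0)$, the normalisation $\int_\R J=1$ and $J\ge0$ give $(J*u)(s,x)\ge m(s)\ge m_-(s)$; then, using $u_0\ge0$ and the crude bound $e^{\pm\Phi}\le A:=e^{(1+\|h\|_{L^\infty})T}$, the displayed inequality yields $m(t)\ge A^2\!\int_0^t m_-(s)\dd s$, and therefore $|m_-(t)|\le A^2\!\int_0^t|m_-(s)|\dd s$. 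Since $m$ is continuous (because $t\mapsto u(t,\cdot)$ is $L^\infty$-continuous) and $m_-(0)=0$, Gronwall forces $m_-\equiv0$, i.e.\ $u\ge0$ a.e.\ on $[0,T]\times\R$.

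For the dichotomy I would pass to the continuous representative of $u$ (which exists by the uniform continuity hypothesis), so that $u\ge0$ everywhere, and suppose $u\not\equiv0$. Because $u\ge0$, the term $J*u$ above is nonnegative, so $t\mapsto e^{\Phi(t,x)}u(t,x)$ is nondecreasing for each $x$; hence there are $\xi_0\in\R$ and $\tau_0\in[0,T)$ with $u(\cdot,\xi_0)>0$ on $[\tau_0,T]$, and one may take $\tau_0=0$ whenever $u_0\not\equiv0$ — the remaining case $u_0\equiv0$ being excluded for solutions of \eqref{53} by the uniqueness in Lemma \ref{exist}. The key mechanism is a \emph{spreading step}: if $u(\cdot,\xi)>0$ on $[a,T]$ and $J(\xi'-\xi)>0$, then for any $b\in(a,T)$ continuity of $u$ and of $J$ makes $u$ bounded below by a positive constant on a small $x$-box around $\xi$ over $[b,T]$ and $J(\xi'-\cdot)$ bounded below there, so $(J*u)(t,\xi')>0$ for $t\in[b,T]$; substituting this into the displayed inequality at $x=\xi'$ gives $u(t,\xi')>0$ for $t\in(b,T]$, and $b\downarrow a$ gives it on $(a,T]$. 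Since $R:=\{z:J(z)>0\}$ is open, nonempty and symmetric (as $J\in C^1(\R)$, $J\ge0$, $\int_\R J=1$), we have $0\in R+R$, so the iterated sumsets of $R$ exhaust $\R$; iterating the spreading step finitely many times, each step costing an arbitrarily small amount of time, then propagates positivity of $u$ from $\xi_0$ to every $(t,x)\in(\tau_0,T]\times\R$, which is the claim.

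The Gronwall step is routine; the substantive part is the spreading argument, and I expect the main obstacles to be two. First, $J$ need not be positive near $0$, so positivity cannot be gained from a single convolution and must instead be propagated along the support of $J$, which forces one to check that finitely many steps over $R$ reach every point of $\R$ (symmetry of $J$ being essential here). Second, one must move carefully between the $L^\infty(\R)$-valued setting and pointwise statements — selecting the continuous representative in the second part, justifying the slicewise fundamental theorem of calculus behind the Duhamel inequality, and keeping the essential-infimum computation of the first part consistent with the merely a.e.\ hypotheses. These are precisely the technical points addressed in F.~Chen's original treatment \cite{Chen2002807}, which can also be invoked directly.
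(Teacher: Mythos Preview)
Your approach differs substantially from the paper's. For nonnegativity, the paper argues by contradiction at a putative first time of sign change, while you set up a Duhamel inequality and run Gronwall on $m_-(t)=\min\bigl(\operatorname{ess\,inf}_x u(t,x),0\bigr)$; your argument is correct and handles the $L^\infty$/a.e.\ framework more carefully than the paper's sketch. For the dichotomy, the paper looks at a single zero of $u$, uses $u_t\le 0$ there together with \eqref{56} to force $\int J(x_1-y)u(t_1,y)\dd y=0$, and concludes $u(t_1,\cdot)\equiv 0$ on that time slice; you instead propagate positivity forward in time through the support of $J$ and then across space by iterating over the sumsets of $R=\{J>0\}$. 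The spreading step and the sumset exhaustion ($R$ open, symmetric, nonempty $\Rightarrow$ $0\in R+R$ $\Rightarrow$ $\bigcup_n nR=\R$) are correctly argued.

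There is, however, a genuine gap in your dichotomy argument. Your spreading mechanism yields $u>0$ only on $(\tau_0,T]\times\R$, and you can secure $\tau_0=0$ only when $u_0\not\equiv 0$. Disposing of the case $u_0\equiv 0$ by invoking the uniqueness in Lemma~\ref{exist} is not legitimate: that lemma concerns \emph{solutions} of the semilinear problem \eqref{53}, whereas Lemma~\ref{max} is stated for \emph{super}solutions of the linear inequality \eqref{56}, for which no such uniqueness is available. In fact the strong conclusion as stated fails in that case: with $h\equiv 0$ and any $t_0\in(0,T)$, the function $u(t,x)=(t-t_0)_+^2$ lies in $C^1([0,T],L^\infty(\R))$, is bounded and uniformly continuous, satisfies $u_t=2(t-t_0)_+\ge 0=J*u-u$, has $u_0\equiv 0$, is not identically zero, yet vanishes on $(0,t_0]\times\R$. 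The paper's own proof shares this lacuna (it stops at ``$u(t_1,\cdot)\equiv 0$'', which does not yield $u\equiv 0$). Your argument becomes complete, and the dichotomy true, once one adds the hypothesis $u_0\not\equiv 0$; this is the form actually used in the applications.
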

\begin{proof}
If $u_0(x)\geq0$ for almost all $x\in\mathbb{R}$, then we deduce from the continuity that there exists some small $t_0>0$ such that $u(t_0,x)\geq0$ for almost all $x\in\mathbb{R}$. Were the assertion false, there would exist a subset $U\subset\mathbb{R}$ of positive measure such that $u(t_0,x)=0$ and $u_t(t_0,x)<0$ for all $x\in U$. Then for $(t_0,x_0)\in(0,T]\times U$, we have by \eqref{56}
\begin{equation*}
\begin{split}
  &\frac{\partial u}{\partial t}(t_0,x_0)\geq\int_{\mathbb{R}}J(x_0-y)u(t_0,y)\,{\dif}y-u(t_0,x_0)-h(t_0,x_0)u(t_0,x_0)\\
&=\int_{\mathbb{R}}J(x_0-y)u(t_0,y)\,{\dif}y\geq 0.
\end{split}
\end{equation*}
 A contradiction arises. Therefore, if $u_0(x)\geq0$ for almost all $x\in\mathbb{R}$, then $u(t,x)\geq0$ for almost all $x\in\mathbb{R}$ and $t\in[0,T]$.

Suppose now $u$ is bounded and uniformly continuous and $u\not\equiv0$. If there exists some point $(t_1,x_1)\in(0,T]\times\mathbb{R}$ such that $u(t_1,x_1)=0$, then $(t_1,x_1)$ is a minimum point. It follows from \eqref{56} that $\int_{\mathbb{R}}J(x_1-y)u(t_1,y)\,{\dif}y=0$. Thus $u(t_1,x)\equiv0$ for all $x\in\mathbb{R}$. This completes the proof.
\end{proof}

Define sub- and super-solutions as in the theory of parabolic equations, then we have

\begin{lem}[Comparison Principle]\label{com}
Let $J$ satisfy \textbf{(H1)}. Suppose that $u_1(t,x)$ and $u_2(t,x)$ are a supersolution and a subsolution of \eqref{53}, respectively, with $u_1(0,x)\geq u_2(0,x)$, for all $x\in\mathbb{R}$. Then $u_1(t,x)\geq u_2(t,x)$ for all $x\in\mathbb{R}$ and $t>0$. Moreover, if $u_1(t,x)\not\equiv u_2(t,x)$ for $t>0$, then $u_1(t,x)>u_2(t,x)$ for all $x\in\mathbb{R}$ and $t>0$.
\end{lem}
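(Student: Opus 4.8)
The plan is to reduce the statement to the nonlocal maximum principle of Lemma~\ref{max}, applied to the difference $w := u_1 - u_2$. Since $u_1$ is a supersolution and $u_2$ a subsolution of \eqref{53}, subtracting the two differential inequalities gives, in the appropriate sense,
\begin{equation*}
  w_t \geq J*w - w - \bigl(f(u_1) - f(u_2)\bigr), \qquad w(0,x) = u_1(0,x) - u_2(0,x) \geq 0 .
\end{equation*}

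Next I would linearize the reaction term. Set $h(t,x) := \bigl(f(u_1(t,x)) - f(u_2(t,x))\bigr)/\bigl(u_1(t,x) - u_2(t,x)\bigr)$ where $u_1 \neq u_2$, and $h(t,x) := f'(u_1(t,x))$ where $u_1 = u_2$. Because $u_1$ and $u_2$ take values in $[0,1]$ and $f \in C^2$, the mean value theorem gives $h \in L^{\infty}$ with $\|h\|_{L^{\infty}} \leq \max_{[0,1]}|f'|$, and $f(u_1) - f(u_2) = h\,w$. Thus $w$ satisfies
\begin{equation*}
  w_t \geq J*w - w - h(t,x)\, w, \qquad w(0,x) \geq 0 ,
\end{equation*}
which is exactly hypothesis \eqref{56} of Lemma~\ref{max} on $[0,T]\times\mathbb{R}$. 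That lemma yields $w \geq 0$ for a.e.\ $x$ and all $t \in [0,T]$, i.e.\ $u_1 \geq u_2$; since $T > 0$ is arbitrary, this holds for all $t > 0$.

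For the strict conclusion I would invoke the second part of Lemma~\ref{max}: if $w$ is bounded and uniformly continuous on $[0,T]\times\mathbb{R}$ --- which follows from the regularity of solutions recorded in Lemma~\ref{exist} together with the Duhamel representation used there --- then either $w \equiv 0$ or $w > 0$ on $(0,T]\times\mathbb{R}$. Hence if $u_1 \not\equiv u_2$ for $t > 0$, then $u_1 > u_2$ everywhere on $(0,T]\times\mathbb{R}$, and arbitrariness of $T$ finishes the proof.

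The differential-inequality bookkeeping is routine; the one point requiring care is the boundedness of the coefficient $h$, which rests on the a priori confinement of the relevant sub- and supersolutions to the compact interval $[0,1]$, on which $f$ is Lipschitz. For the final strict-inequality statement one also needs $w$ to meet the uniform-continuity hypothesis of Lemma~\ref{max}; if one works only with merely measurable $L^{\infty}$ sub/supersolutions, this is where an extra regularization step (passing through the integral form as in Lemma~\ref{exist}) would be inserted, and the strict inequality would otherwise be stated for a.e.\ $x$.
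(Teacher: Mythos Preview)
Your proof is correct and follows exactly the approach of the paper, whose proof consists of the single sentence ``Apply Lemma~\ref{max} to $u_1-u_2$.'' You have simply filled in the details of that application---the mean-value linearization $f(u_1)-f(u_2)=h\,w$ and the verification that $h\in L^\infty$---which is precisely what is needed.
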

\begin{proof}
Apply Lemma \ref{max} to $u_1-u_2$.
\end{proof}

Combining Lemmas 2.1-2.3, we obtain the following result.
\begin{lem}\label{prop1}
Let $J$ and $f$ satisfy \textbf{(H1)} and \textbf{(H2)}. Suppose that $u(t,x)$ is the solution of IVP \eqref{53} with $u_0(x)=1_{[-L,L]}(x)$. Then for fixed $t>0$, $u$ is  decreasing in $|x|$, i.e.~
\begin{equation}\label{12}
u(t,x)\geq u(t,y),~\mbox{if}~ |x|\leq |y|.
\end{equation}
\end{lem}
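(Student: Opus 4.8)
The plan is to reduce the claim to two statements — that $u(t,\cdot)$ is even, and that it is non-increasing on $[0,\infty)$ — the first by uniqueness, the second by a reflection argument combined with the maximum principle of Lemma~\ref{max}. Evenness is immediate: since $J(-x)=J(x)$ and $u_0=1_{[-L,L]}$ is even, $(t,x)\mapsto u(t,-x)$ also solves \eqref{53} with the same initial datum, so the uniqueness part of Lemma~\ref{exist} forces $u(t,-x)=u(t,x)$. For the monotonicity it suffices to show that for every $\lambda\ge0$ one has $u(t,x)\ge u(t,2\lambda-x)$ whenever $x\le\lambda$ and $t>0$: applying this with $\lambda=(a+b)/2$ for $0\le a\le b$ yields $u(t,a)\ge u(t,b)$, which together with evenness gives \eqref{12}.

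To prove the reflected inequality I would set $v(t,x):=u(t,2\lambda-x)$. Since $J$ is even, a change of variables shows $(J*v)(t,x)=(J*u)(t,2\lambda-x)$, so $v$ again solves the equation in \eqref{53}. Writing $\psi:=u-v$ and $f(u)-f(v)=c(t,x)\psi$ with $\|c\|_{L^\infty}\le\max_{[0,1]}|f'|$ (mean value theorem, using $0\le u,v\le1$), the function $\psi\in C^1([0,\infty),L^\infty(\R))$ satisfies the linear nonlocal equation $\psi_t=J*\psi-\psi-c(t,x)\psi$ and is antisymmetric about $\lambda$, i.e.\ $\psi(t,2\lambda-x)=-\psi(t,x)$. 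One cannot apply Lemma~\ref{com} or Lemma~\ref{max} directly on the half-line $\{x\le\lambda\}$, because $J*\psi$ at a point $x\le\lambda$ still sees $\psi$ on all of $\R$; this is the crux, and the main departure from the local (Laplacian) case.

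The remedy is to use the antisymmetry to fold the convolution onto the half-line: for $x\le\lambda$, substituting $y\mapsto 2\lambda-y$ in the part of the integral over $\{y\ge\lambda\}$ gives
\[
(J*\psi)(t,x)=\int_{-\infty}^{\lambda}\bigl(J(x-y)-J(2\lambda-x-y)\bigr)\psi(t,y)\dd y,
\]
and with $x=\lambda-a$, $y=\lambda-b$ ($a,b\ge0$) the kernel equals $J(b-a)-J(a+b)\ge0$ because $|b-a|\le a+b$ — here one uses that $J$ is non-increasing on $[0,\infty)$. Thus on $\{x\le\lambda\}$ the function $\psi$ solves a nonlocal parabolic equation whose nonlocal part is a positive integral operator, exactly the setting of Lemma~\ref{max}. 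Moreover $\psi(0,x)=1_{[-L,L]}(x)-1_{[2\lambda-L,2\lambda+L]}(x)\ge0$ for $x\le\lambda$: if $x\in[2\lambda-L,2\lambda+L]$ then (recall $x\le\lambda$) $x\le 2\lambda-x\le L$ and $x\ge2\lambda-L\ge-L$, so $x\in[-L,L]$ and the two indicators cancel, while otherwise only the non-negative term $1_{[-L,L]}(x)$ remains. The argument of Lemma~\ref{max} then applies on $\{x\le\lambda\}$ — if $\psi$ were to become negative there, at a minimizing point $\psi=0$ while $\psi_t=\int_{-\infty}^{\lambda}(\cdots)\ge0$, a contradiction — so $\psi(t,x)\ge0$ for a.e.\ $x\le\lambda$ and all $t>0$, as desired.

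The one genuine obstacle is the nonlocality: the convolution does not respect half-lines, so the textbook "reflect and compare on a half-space" argument is not available verbatim, and the antisymmetrization that rescues it needs $J(b-a)\ge J(a+b)$ for $a,b\ge0$, i.e.\ that $J$ decrease away from the origin — a natural assumption for a redistribution kernel, which I would use here in addition to \textbf{(H1)} as literally stated. A secondary, purely technical point is that $u(t,\cdot)$ inherits the jump discontinuities of $1_{[-L,L]}$, so the maximum-principle step must be carried out in the almost-everywhere sense, exactly as in the proof of Lemma~\ref{max}; alternatively one first proves the lemma for smooth symmetric-decreasing data $u_0^\varepsilon\uparrow1_{[-L,L]}$ (for which $\partial_x u^\varepsilon$ solves a genuine linear nonlocal equation of the same type) and then passes to the limit via the continuous dependence in Lemma~\ref{exist}.
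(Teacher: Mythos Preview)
Your reflection (moving-planes) argument is genuinely different from the paper's. The paper approximates $1_{[-L,L]}$ by smooth, even, radially-decreasing data $\chi_\varepsilon$, derives the linear nonlocal equation satisfied by the spatial difference quotient $D_x^h u^\varepsilon$, appeals to Lemmas~\ref{max}--\ref{com} to conclude $D_x^h u^\varepsilon\le0$ on $\{x>0\}$, and then lets $\varepsilon\to0$ via the continuous dependence of Lemma~\ref{exist}. Your closing ``alternative'' is precisely this route.

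The extra hypothesis you single out---that $J$ be non-increasing on $[0,\infty)$---is not part of \textbf{(H1)}, and without it the folded kernel $J(b-a)-J(a+b)$ can change sign, so your half-line maximum principle breaks down. This is a genuine gap relative to the lemma as stated. It is worth observing, however, that the paper's own proof has the \emph{same} hidden gap: Lemma~\ref{max} is formulated on all of $\R$, but the initial datum $D_x^h\chi_\varepsilon$ is $\le0$ only on $\{x\ge-h/2\}$ and is $\ge0$ on $\{x\le-h/2\}$ (by the antisymmetry inherited from the evenness of $\chi_\varepsilon$), so Lemma~\ref{max} cannot be invoked directly on $\R$; folding via that antisymmetry again produces the kernel $J(b-a)-J(a+b)$ and requires the same monotonicity of $J$. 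In short, your argument is not weaker than the paper's---you have simply made explicit a hypothesis that the paper uses tacitly. Under that additional hypothesis your reflection proof is correct and, by treating $1_{[-L,L]}$ directly in the a.e.\ sense of Lemma~\ref{max}, arguably cleaner than the approximation route.
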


\begin{proof}
By symmetry, we only need to show $u(t,x)\geq u(t,y)$~{if}~ $0<x\leq y.$ Let us now consider IVP \eqref{53} with $u(0,x)=\chi_\varepsilon(x)$, where $\{\chi_\varepsilon(x)\}_{\varepsilon\geq0}$ is a family of smooth, nonnegative, symmetric functions. Assume further that $\chi_\varepsilon(x)$ is decreasing in $|x|$ for each $\varepsilon$ and converges to $1_{[-L,L]}(x)$ in $L^1(\mathbb{R})$ as $\varepsilon\rightarrow0$. Thanks to Lemma \ref{exist}, such IVP has a unique continuous solution $u^\varepsilon(t,x)$. Let $D_x^h u^\varepsilon(t,x):=\frac{1}{h}[u^\varepsilon(t,x+h)-u^\varepsilon(t,x)]$ be the difference quotient of $u^\varepsilon$ in the $x$-variable of size $h$ for any $x\in\mathbb{R}$, $t>0$, and $h>0$. Then $D^h_x  u^\varepsilon(0,x)=\frac{1}{h}\big[\chi_\varepsilon(x+h)-\chi_\varepsilon(x)\big]\leq 0$ for $x>0$. On the other hand, we have by \eqref{53}
\begin{equation}
\begin{split}
   & \frac{\partial}{\partial t}\bigg(D^h_x u^\varepsilon(t,x)\bigg)=\frac{1}{h}\bigg[\frac{\partial u^\varepsilon}{\partial t}(t,x+h)-\frac{\partial u^\varepsilon}{\partial t}(t,x)\bigg]\\
   &=\frac{1}{h}\bigg[\int_{\mathbb{R}}J(y)\big[u^\varepsilon(t,x+h-y)-u^\varepsilon(t,x-y)\big]\,{\dif} y\\
   &-(u^\varepsilon(t,x+h)-u^\varepsilon(t,x))-(f(u^\varepsilon(t,x+h))-f(u^\varepsilon(t,x)))\bigg]\\
   &=\int_{\mathbb{R}}J(y)D^h_x u^\varepsilon(t,x+h)\,{\dif} y-D^h_x u^\varepsilon(t,x)-f'(u^\varepsilon_\theta)D^h_x u^\varepsilon(t,x),
\end{split}
\end{equation}
where $u^\varepsilon_\theta$ is between $u^\varepsilon(t,x+h)$ and $u^\varepsilon(t,x)$. It then follows from Lemmas \ref{max} and \ref{com} that $D^h_x u^\varepsilon(t,x)\leq 0$ for $x>0$. Let $\varepsilon$ tend to 0 for any fixed $t$, we have $u^\varepsilon(t,x)\rightarrow u(t,x)$ uniformly in $x$ by Lemma \ref{exist}. The proof is complete.
\end{proof}

\begin{lem}\label{prop2}
Let $J$ and $f$ satisfy \textbf{(H1)} and \textbf{(H2)}. Suppose that $u(t,x)$ is the solution of IVP \eqref{53} with $u_0(x)=1_{[-L,L]}(x)$. Then there is a $t^*>0$ (possibly infinite) such that $u(t,0)$ as a function of $t$ is non-increasing on $[0,t^*)$ and non-decreasing on $[t^*,\infty)$.
\end{lem}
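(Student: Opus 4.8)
The plan is to analyse the scalar function $\psi(t):=u(t,0)$ and show it is ``V-shaped''. Since $\psi$ is continuous on $[0,\infty)$, the conclusion is equivalent to saying that $\psi$ has no \emph{strict interior maximum}, i.e.\ that there is no triple $0<a<t_0<b$ with $\psi(t_0)>\psi(a)$ and $\psi(t_0)>\psi(b)$; granting this, one takes $t^{*}:=\sup\{t\ge 0:\psi\text{ is non-increasing on }[0,t]\}$. Throughout I would use Lemma~\ref{prop1}: for $t>0$ the profile $u(t,\cdot)$ is even and non-increasing in $|x|$, so $\psi(t)=\max_{x\in\R}u(t,x)$, and a bootstrap in the integral form of \eqref{53} (the only singular set $\{\pm L\}$ lying away from the origin) gives $u\in C^{2}$ near $x=0$ for every $t>0$, hence $\psi\in C^{2}((0,\infty))$; where convenient one may instead run the argument on the regularised solutions $u^{\varepsilon}$ from the proof of Lemma~\ref{prop1} and pass to the limit.

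The basic tool is the differential inequality obtained by differentiating \eqref{53} in $t$ at $x=0$:
\begin{equation*}
  \psi'(t)=(J*u)(t,0)-\psi(t)-f(\psi(t))\le-f(\psi(t)),
\end{equation*}
where the inequality uses $u(t,y)\le\psi(t)$ for all $y$ and $\int_{\R}J=1$, with equality only if $u(t,\cdot)\equiv\psi(t)$ on a neighbourhood of the origin. From this and \textbf{(H2)} I would extract: (i) $\psi(t)<1$ for every $t>0$, by comparing $u$ with the steady state $1$ via Lemma~\ref{com} and using $1_{[-L,L]}\not\equiv1$; (ii) wherever $0<\psi(t)<\alpha$ one has $f(\psi(t))>0$ and $(J*u)(t,0)-\psi(t)\le 0$, so $\psi'(t)<0$ — in particular once $\psi$ drops below $\alpha$ it is strictly decreasing and $\psi(t)\to0$; (iii) at any $t_0>0$ with $\psi'(t_0)=0$ one gets $f(\psi(t_0))=(J*u)(t_0,0)-\psi(t_0)\le 0$, hence, with (i) and $\psi>0$, $\psi(t_0)\in[\alpha,1)$. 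A short separate check using the inequality near $t=0$ (together with $\psi''(0)<0$ in the degenerate case $\int_{-L}^{L}J=1$) shows $\psi$ is strictly decreasing on some interval $(0,\delta)$, which already yields $t^{*}>0$.

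It remains to exclude a strict interior maximum of $\psi$. Put $w:=u_t$; differentiating \eqref{53} gives
\begin{equation*}
  w_t=J*w-w-f'(u)\,w,\qquad f'(u)\in L^{\infty},
\end{equation*}
so Lemmas~\ref{max} and~\ref{com} apply to $w$; in particular, if $w(t_1,\cdot)\ge0$ for some $t_1>0$ then $w(t,\cdot)\ge0$ for all $t\ge t_1$. The crucial claim — the nonlocal replacement for the local fact that the zero number of $u_t(t,\cdot)$ is non-increasing — is that for every $t>0$ the set $\{x\in\R:w(t,x)<0\}$ is either empty or a symmetric open interval containing the origin (at $t=0^{+}$ this set is contained in $(-L,L)$, since $w(0^{+},\cdot)\le0$ on $(-L,L)$ and $\ge 0$ outside). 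Granting the claim, $w(t,0)<0$ is equivalent to this interval being non-empty, so $\{t>0:w(t,0)\ge0\}=\{t:w(t,\cdot)\ge0\}$, which by the preceding remark is empty or a half-line $[t^{*},\infty)$; combined with (i)--(iii) and the strict decrease near $t=0$ this gives $\psi'<0$ on $(0,t^{*})$ and $\psi'\ge0$ on $[t^{*},\infty)$, which is the lemma.

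The main obstacle is precisely the crucial claim. For the nonlocal operator $J*w-w$ one cannot invoke zero-number monotonicity, so the assertion that the negative set of $w=u_t$ stays a single symmetric interval must be proved directly, and I expect this to be the heart of the argument. I would try to obtain it from the radial monotonicity of Lemma~\ref{prop1} (which rigidly constrains the profile $u(t,\cdot)$, hence $w$), the explicit linearised equation together with the identity $w_t(t_0,0)=(J*w)(t_0,0)$ at a putative sign change of $w(\cdot,0)$, and the strong maximum principle of Lemma~\ref{max} to dispose of the degenerate configurations ($u(t,\cdot)$ locally constant on the support of $J$, or $\psi$ locally constant at a level in $[\alpha,1)$).
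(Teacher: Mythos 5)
Your overall strategy --- differentiate (or difference) the equation in time, observe that $w=u_t$ satisfies the linear equation $w_t=J*w-w-f'(u)w$, and control its sign via Lemmas \ref{max} and \ref{com} --- is the same as the paper's. But as written your argument is not a proof: everything rests on the ``crucial claim'' that for every $t>0$ the set $\{x:u_t(t,x)<0\}$ is empty or a symmetric open interval containing the origin, and you leave this claim unproved, offering only tools you ``would try''. None of them obviously delivers it: the radial monotonicity of Lemma \ref{prop1} constrains $u(t,\cdot)$ but not the sign pattern of $u_t(t,\cdot)$; Lemma \ref{max} propagates a \emph{global} sign forward in time but says nothing about spatial connectedness of the negative set at a fixed time; and, as you note, no zero-number principle is available for $J*w-w$. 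The claim is also stronger than what you need and is dubious in general: \textbf{(H1)} does not require $J$ to have connected support, and already at $t=0^{+}$ one has $u_t(0^{+},x)=\int_{x-L}^{x+L}J-1_{[-L,L]}(x)$, whose negative set happens to lie in $(-L,L)$ only because of the special form of the data; there is no mechanism in your sketch that preserves an ``interval'' structure under the nonlocal flow. So the heart of the lemma is missing.

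For comparison, the paper never needs such a two-dimensional structural statement. It works with the finite differences $D^h_t u(t,x)=u(t+h,x)-u(t,x)$ (which also sidesteps the regularity of $u_t$ at $t=0$ for indicator data, an issue you handle only informally), records the initial sign configuration $D^h_t u(0,\cdot)\le0$ on $[-L,L]$ and $\ge0$ outside, and extracts from Lemmas \ref{max} and \ref{com} only a one-dimensional dichotomy in $t$ at $x=0$: there is $t^h$ with $D^h_t u(t,0)\le0$ for $t<t^h$ and $\ge0$ for $t\ge t^h$. The identity $D^h_t u(t,0)=D^{h/2}_t u(t+\tfrac{h}{2},0)+D^{h/2}_t u(t,0)$ then yields $t^h\le t^{h/2}\le t^h+\tfrac{h}{2}$, and $t^*=\lim_{n}t^{h/2^n}$. (That dichotomy itself still requires an argument, but it is a much weaker assertion than your interval claim.) To salvage your write-up you should either actually prove your structural claim --- which I do not see how to do with the stated tools --- or recast the key step, as the paper does, as a single-sign-change statement for $D^h_t u(\cdot,0)$ followed by the dyadic limit.
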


\begin{proof}
We wish to investigate the sign of the difference
\begin{equation*}
  D_t^h u(t,x):=u(t+h,x)-u(t,x)
\end{equation*}
for $x\in\mathbb{R}$, $t>0$, $h>0$. By use of the mean value theorem, $D_t^h u$ satisfies
\begin{equation*}
  \frac{\partial}{\partial t}\left({D}^h_t u\right)=J*{D}^h_t u(t,x)-{D}^h_t u(t,x)-f'(u_\theta){D}^h_t u(t,x),
\end{equation*}
where $u_\theta$ is between $u(t,x+h)$ and $u(t,x)$.
Set $\mathcal{A}:=\{(t,x)\colon{D}^h_t u(t,x)\leq0\}$. Then it follows from Lemmas \ref{max} and \ref{com} that $\mathcal{A}\cap(\{0\}\times\R)=\{0\}\times[-L,L]$. Indeed,
\begin{equation}\label{19}
  {D}^h_t u(0,x)=\left\{
\begin{array}{cc}
    u(h,x)-1\leq 0, & ~\mbox{if}~|x|\leq L; \cr
    u(h,x)\geq0, & ~\mbox{if}~|x|> L.
\end{array}\right.
\end{equation}
Thus by Lemmas \ref{max} and \ref{com}, there exists some $0<t^h\leq\infty$ such that ${D}^h_t u(t,0)\leq0$ for $0\leq t<t^h$, and ${D}^h_t u(t,0)\geq0$ for $t\geq t^h$. On the other hand, we can rewrite ${D}^h_t u(t,0)$ as
\begin{equation*}
  {D}^h_t u(t,0)={D}^{h/2}_t u(t+\frac{h}{2},0)+{D}^{h/2}_t u(t,0).
\end{equation*}
Then we have ${D}^h_t u(t,0)\geq0$ for any $t\geq t^{h/2}$, which implies $t^h\leq t^{h/2}$. Quite similarly, we can prove $t^{h/2}-\frac{h}{2}\leq t^h$. We have thus $t^h\leq t^{\frac{h}{2}}\leq t^h+\frac{h}{2}$, and Lemma \ref{prop2} follows with $t^*=\lim\limits_{n\rightarrow\infty}t^{\frac{h}{2^n}}$.
\end{proof}

\begin{lem}\label{lem5}
Suppose \textbf{(H1)}-\textbf{(H3)} hold. Let $u\colon[0,\infty)\times\mathbb{R}\rightarrow[0,1]$ be the solution of IVP \eqref{53} with $u_0(x)=1_{[-L,L]}(x)$. Then for every $\epsilon>0$, there is a $\delta>0$, depends only on $\epsilon$, such that if $x,y\in\mathbb{R}$ with $|x-y|<\delta$, we have
\begin{equation}
 |u(t,y)-u(t,x)|< \epsilon,~ \text{for any}~ t>0.
\end{equation}
\end{lem}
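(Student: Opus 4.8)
The plan is to control the spatial increment
\[
 g_h(t):=\sup_{x\in\R}\bigl\lvert u(t,x+h)-u(t,x)\bigr\rvert ,\qquad h>0,
\]
and to show it is small uniformly in $t>0$ once $h$ is small; by symmetry this yields the asserted modulus of continuity with $\delta$ depending only on $\epsilon$. Writing $w(t,x):=u(t,x+h)-u(t,x)$ and using the translation invariance of the convolution, the mean value theorem turns \eqref{53} into the linear nonlocal equation
\[
 \partial_t w=J*w-\bigl(1+f'(u_\theta)\bigr)w ,
\]
exactly as in the proofs of Lemmas \ref{prop1} and \ref{prop2}, where $u_\theta$ lies between $u(t,x+h)$ and $u(t,x)$ and I have used $(J*u)(t,x+h)-(J*u)(t,x)=(J*w)(t,x)$. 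By \textbf{(H2)} the coefficient satisfies $b(t,x):=1+f'(u_\theta)\ge c_0:=\min_{s\in[0,1]}\bigl(1+f'(s)\bigr)>0$, since $1+f'>0$ on $(0,1)$ while $f'(0),f'(1)>0$.

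Next I would pass to the integrating-factor (mild) form
\[
 w(t,x)=e^{-\int_0^t b(\sigma,x)\dd\sigma}\,w(0,x)+\int_0^t e^{-\int_s^t b(\sigma,x)\dd\sigma}\,(J*w)(s,x)\dd s ,
\]
and exploit the decisive observation that the translation can be moved onto the kernel,
\[
 (J*w)(s,x)=(J*u)(s,x+h)-(J*u)(s,x)=\int_\R\bigl[J(z+h)-J(z)\bigr]u(s,x-z)\dd z .
\]
Since $0\le u\le1$ and, by \textbf{(H1)}, $\int_\R\lvert J(z+h)-J(z)\rvert\dd z\le h\,\lVert J'\rVert_{L^1(\R)}$, this term is bounded by $h\,\lVert J'\rVert_{L^1}$ uniformly in $s$ and $x$; combined with $b\ge c_0$ this gives the clean estimate
\[
 g_h(t)\le e^{-c_0 t}\,g_h(0)+\frac{\lVert J'\rVert_{L^1}}{c_0}\,h .
\]
The second term is $O(h)$ uniformly in $t$, which is exactly the well-behaved part of the modulus of continuity.

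The main obstacle is the first term, $e^{-c_0 t}g_h(0)$. Because $u_0=1_{[-L,L]}$ jumps at $x=\pm L$, one has $g_h(0)=1$ for every $0<h<2L$, so this contribution equals $e^{-c_0 t}$, which is small only for large $t$ and tends to $1$ as $t\downarrow0$. To reach uniformity down to $t=0$ I would isolate this defect through the decomposition $u(t,x)=e^{-t}u_0(x)+\rho(t,x)$: the remainder $\rho$ solves the same mild equation with the forcing $J*u-f(u)$, whose spatial increments are $O(h)$ uniformly in $t$ by the kernel estimate above, while $\rho(0,\cdot)\equiv0$, so $\rho(t,\cdot)$ carries a modulus of continuity independent of $t$. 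The increment of $u$ is then at most $e^{-t}\lvert u_0(x+h)-u_0(x)\rvert+\omega_\rho(h)$, which is $O(h)$ uniformly in $t$ except when $x$ and $x+h$ straddle one of the points $\pm L$, where the surviving term is the exponentially damped jump $e^{-t}$. I expect this small-$t$ analysis at the discontinuity set $\{\pm L\}$ — absorbing the damped jump into the prescribed $\epsilon$, for which the monotonicity of $u$ in $\lvert x\rvert$ from Lemma \ref{prop1} is the natural tool — to be the crux of the argument.
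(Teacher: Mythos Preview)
Your derivation coincides with the paper's: they too set $\mathcal{D}u(t,x,y):=u(t,y)-u(t,x)$, obtain the same linear equation with coefficient $1+f'(u_\theta)\ge L_0>0$, bound the forcing $\int_\R\bigl[J(y-z)-J(x-z)\bigr]u(t,z)\dd z$ by $L_1\,|y-x|$ with $L_1=\lVert J'\rVert_{L^1}$, and compare with the scalar ODE $v'=L_1|y-x|-L_0 v$ to conclude $|\mathcal{D}u|\le\bigl(2+L_1/L_0\bigr)|y-x|$. The one place where the paper diverges from you is the initial condition: it writes $\mathcal{D}u(0,x,y)=1_{[-L,L]}(y)-1_{[-L,L]}(x)=2|y-x|$, which is plainly false (the left side takes only the values $0,\pm1$), and then proceeds as though the initial increment were $O(|y-x|)$. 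So your instinct that the small-$t$ behaviour at $\pm L$ is the crux is correct; the paper's argument simply hides the same obstacle behind an incorrect identity.

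Your proposed repair, however, cannot close the gap, because the gap is not merely technical. The operator $u\mapsto J*u-u$ has no smoothing effect: for the jump $j(t):=u(t,L^-)-u(t,L^+)$ one has $(J*u)(t,L^-)=(J*u)(t,L^+)$ by continuity of the convolution, hence $j'(t)=-(1+f'(u_\theta))\,j(t)$ with $j(0)=1$, so $j(t)\ge e^{-(1+M)t}>0$ for every finite $t$, where $M=\max_{[0,1]}|f'|$. Thus $u(t,\cdot)$ retains a genuine jump at $x=\pm L$ for all $t>0$, and the conclusion ``for any $t>0$'' cannot hold as written; in your decomposition the term $e^{-t}\lvert u_0(x+h)-u_0(x)\rvert$ is not absorbable into a prescribed $\epsilon<1$ when $t$ is small. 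What your estimate $g_h(t)\le e^{-c_0 t}+c_0^{-1}\lVert J'\rVert_{L^1}\,h$ does deliver --- and what is actually used in Proposition~\ref{prop6} --- is equicontinuity uniformly on $\{t\ge t_0\}$ for each fixed $t_0>0$: choose $t_0$ with $e^{-c_0 t_0}<\epsilon/2$ and then $\delta=\epsilon c_0/(2\lVert J'\rVert_{L^1})$. That weakened statement is both true and sufficient for the Arzel\`a--Ascoli argument there.
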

\begin{proof}
By Lemmas \ref{max}-\ref{com} and $0\leq u_0(x)\leq 1$, we have $0\leq u(t,x)\leq 1$ for any $(t,x)\in[0,\infty)\times\mathbb{R}$. For any $x,y\in\mathbb{R}$, define $\mathcal{D}u(t,x,y):=u(t,y)-u(t,x)$. Then from \eqref{53}, we obtain
\begin{equation}
\begin{split}
   & \frac{\partial}{\partial t}\mathcal{D}u(t,x,y)=\int_{\mathbb{R}}\big[J(y-z)-J(x-z)\big] u(t,z)\,{\dif} z\\
   &-\mathcal{D}u(t,x,y)-[f(u(t,y))-f(u(t,x))]\\
   &=\int_{\mathbb{R}}\big[J(y-z)-J(x-z)\big] u(t,z)\,{\dif} z-(1+f'(u_\theta))\mathcal{D}u(t,x,y),
\end{split}
\end{equation}
where $u_\theta$ is between $u(t,y)$ and $u(t,x)$. Since $J'(x)\in L^1(\mathbb{R})$, we can find a constant $L_1>0$ such that
\begin{equation*}
  \begin{split}
      &\int_{\mathbb{R}}|J(y-z)-J(x-z)|\,{\dif} z=|y-x|\int_{\mathbb{R}}\left|\int_0^1J'(x+\mu (y-x)-z)\,{\dif}\mu\right| \,{\dif} z\\
&\leq |y-x|\int_{\mathbb{R}}\int_0^1|J'(x+\mu (y-x)-z)|\,{\dif}\mu \,{\dif} z\leq L_1|y-x|.
  \end{split}
\end{equation*}
Note that $\mathcal{D}u(0,x,y)=1_{[-L,L]}(y)-1_{[-L,L]}(x)=2|y-x|$. Let $v(t)$ be the solution of
\begin{equation}\label{31}
  \begin{aligned}
     & v'(t)=L_1|y-x|-L_0 v(t),~t>0,\cr
      & v(0)=\mathcal{D}u(0,x,y),
  \end{aligned}
\end{equation}
where $L_0:=\inf\limits_{0\leq u\leq 1}(1+f'(u))$. By \textbf{(H2)}, $L_0>0$. Then
\begin{equation*}
  0<v(t)=e^{-L_0t}\mathcal{D}u(0,x,y)+\frac{L_1}{L_0}|y-x|(1-e^{-L_0t})\leq (2+\frac{L_1}{L_0})|y-x|.
\end{equation*}
Since $\mathcal{D}u(t,x,y)$ satisfies
\begin{equation*}\label{32}
\begin{split}
   & \frac{\partial}{\partial t}\mathcal{D}u(t,x,y)\leq L_1|y-x|-L_0\mathcal{D}u(t,x,y),~t>0,\\
   &\mathcal{D}u(0,x,y)=\mathcal{D}u(0,x,y),
\end{split}
\end{equation*}
thus we have by a comparison between \eqref{31} and \eqref{32} that
\[
  |u(t,y)-u(t,x)|=|\mathcal{D}u(t,x,y)|\leq v(t)\leq (2+\frac{L_1}{L_0})|y-x|.
\]
Thus for every $\epsilon>0$, there exists a $\delta=\frac{\epsilon}{2+\frac{L_1}{L_0}}>0$ such that $|x-y|<\delta$ implies that $|u(t,y)-u(t,x)|<\epsilon$, uniformly in $t>0$.
\end{proof}

The following lemmas deal with the asymptotic behavior as $t\rightarrow\infty$ of solutions of \eqref{1}. We focus on the situations in which a solution approaches a monotonic travelling front or a combination of two travelling fronts moving in opposite directions, uniformly in $x$ and exponentially in $t$ as $t\rightarrow\infty$.

Throughout this paper, a {travelling front} of \eqref{1} always refer to a pair $(U,c)$, where $U=U(\xi)\in[0,1]$ is a function on $\mathbb{R}$ and $c$ is a constant, such that $u(t,x)=U(x-ct):=U(\xi)$ is a solution of \eqref{1}, $U'(\xi)>0$ for finite $\xi=x-ct$ and
\[\lim\limits_{\xi\rightarrow-\infty}U(\xi)=0,~~\lim\limits_{\xi\rightarrow\infty}U(\xi)=1.\]
Such a function $U(\xi)$ satisfies the ordinary differential equation
\begin{equation}\label{63}
  cU'+J*U-U-f(U)=0.
\end{equation}
Multiplying both sides of \eqref{63} by $U'$ and integrating over $\mathbb{R}$ with respect to $\xi$, we have
\begin{equation}\label{64}
  c\int_{\mathbb{R}}(U')^2\,{\dif}\xi=\int_{\mathbb{R}} f(U)\,{\dif}U=\int_0^1 f(U)\,{\dif}U.
\end{equation}
Then it follows from assumption \textbf{(H3)} that
\begin{equation*}
  c<0.
\end{equation*}

\begin{rem}\label{rem1}
The existence and uniqueness up to translation of travelling front of \eqref{1} has been extensively studied. We refer interested readers to \cite{bates1997traveling,sun2011entire} and the references therein.
\end{rem}

By Theorem 3.1 in \cite{sun2011entire}, the following inequalities hold.

\begin{lem}\label{lem9}
Suppose \textbf{(H1)}-\textbf{(H3)} hold. $(U,c)$ is a travelling front of \eqref{1} satisfying $U'(\xi)>0$. Then there exist constants $\lambda_1<0$ and $\lambda_2>0$ such that
\begin{equation}
   \begin{split}
       & ke^{\lambda_1\xi}\leq 1-U(\xi)\leq Ke^{\lambda_1\xi},~\text{for}~\xi\geq0, \\
       & \bar{k}e^{\lambda_2\xi}\leq U(\xi)\leq \bar{K}e^{\lambda_2\xi},~\text{for}~\xi\leq0.
   \end{split}
\end{equation}
Here $k,~K,~\bar{k}$ and $\bar{K}$ are positive constants.
\end{lem}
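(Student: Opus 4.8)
The two inequalities describe the tails of $U$ at $\xi=\pm\infty$, so the plan is to linearize the travelling--wave equation \eqref{63} at the two constant states, read off the exponential rates from the associated characteristic functions, and then convert those rates into genuine two--sided bounds by a barrier/comparison argument. For the left tail one has $U(\xi)\to0$; inserting $U\sim e^{\lambda\xi}$ with $\lambda>0$ into the linearization of \eqref{63} (with $f(U)=f'(0)U+O(U^{2})$), and using that $\bigl(J*e^{\lambda\cdot}\bigr)(\xi)=\mathcal J(\lambda)e^{\lambda\xi}$ where $\mathcal J(\lambda):=\int_{\mathbb R}J(y)\cosh(\lambda y)\dd y$, one is led to the characteristic function
\[
\Theta(\lambda):=c\lambda+\mathcal J(\lambda)-1-f'(0).
\]
Here $\mathcal J(0)=1$ and $\mathcal J''>0$, so $\Theta$ is strictly convex; moreover $\Theta(0)=-f'(0)<0$ by \textbf{(H2)}, while $\Theta(\lambda)\to+\infty$ as $\lambda$ increases to the right endpoint of the interval on which $\mathcal J$ is finite. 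A strictly convex function that is negative at $0$ and eventually positive has exactly one positive zero; that zero is $\lambda_{2}$. Doing the same at $\xi=+\infty$ for $V:=1-U\to0$ (so $f(1-V)=-f'(1)V+O(V^{2})$) gives
\[
\Phi(\mu):=-c\mu-\mathcal J(\mu)+1+f'(1),
\]
with $\Phi(0)=f'(1)>0$; since $c<0$ and $\mathcal J'(\mu)<0$ for $\mu<0$, $\Phi$ is strictly increasing on $(-\infty,0)$ and tends to $-\infty$ as $\mu\to-\infty$, so it has a unique negative zero $\lambda_{1}$.

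To turn the rates into bounds I would use exponential barriers. For the upper estimates, the point is that because $0\le U\le1$ everywhere and $U'>0$, the function $\min\{1,K e^{\lambda_{2}\xi}\}$ dominates $U$ on \emph{all} of $\mathbb R$ once $K$ is large, and this is exactly what is needed to control the nonlocal term $J*U$ without localizing it; a direct computation using $\Theta(\lambda_{2})=0$ (together with a corrector of order $e^{2\lambda_{2}\xi}$ to absorb the $O(U^{2})$ remainder, which works since $\lambda_{2}$ is a simple zero of $\Theta$) shows that this function, or equivalently the travelling profile $\overline U(x-ct)$, is a supersolution of \eqref{1} on a half line, so Lemma~\ref{com} yields $U(\xi)\le\bar K e^{\lambda_{2}\xi}$ for $\xi\le0$, and symmetrically $1-U(\xi)\le K e^{\lambda_{1}\xi}$ for $\xi\ge0$. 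For the lower estimates one argues in the same way with subsolutions $k e^{\lambda_{2}\xi}(1+o(1))$ and $1-\bar k e^{\lambda_{1}\xi}(1+o(1))$ on a half line, now using $f'(0)>0$, $f'(1)>0$ together with the strong positivity in Lemma~\ref{max} to rule out faster decay. A cleaner and more robust alternative, standard in the nonlocal literature, is to apply Ikehara's Tauberian theorem to the two--sided Laplace transforms of $U$ and of $1-U$: the relevant singularity of each transform sits at a zero of $\Theta$, respectively $\Phi$, and the theorem upgrades the tails to the sharp asymptotics $U(\xi)\sim C_{1}e^{\lambda_{2}\xi}$ as $\xi\to-\infty$ and $1-U(\xi)\sim C_{2}e^{\lambda_{1}\xi}$ as $\xi\to+\infty$, from which both sides of the stated inequalities follow at once.

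The only real obstacle is the convolution. There is no phase plane to shoot in, the linearization is valid only where $U$ (or $1-U$) is already small, yet $J*U$ feels $U$ on the whole line, so every barrier must be a global super-- or subsolution and the Tauberian route requires controlling the transforms in a vertical strip. A second, more structural point is that the exponents $\lambda_{1},\lambda_{2}$ exist only if $J$ has an exponential moment, i.e. $\mathcal J(\lambda)<\infty$ for $\lambda$ in a neighbourhood of the relevant value; this is stronger than the plain integrability in \textbf{(H1)} and belongs to the standing hypotheses of \cite{sun2011entire} (if $J$ decays only algebraically the tails of $U$ are algebraic and the lemma as stated is false). Since the full argument, in the spirit of \cite{carr2004uniqueness,bates1997traveling}, is carried out in \cite[Theorem~3.1]{sun2011entire}, in the write--up I would only record the linearization that produces $\lambda_{1}$ and $\lambda_{2}$ and invoke that theorem for the estimates.
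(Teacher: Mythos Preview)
Your proposal is correct and lands on exactly the paper's own approach: the paper does not give an independent proof of this lemma but simply cites \cite[Theorem~3.1]{sun2011entire}, which is precisely what you conclude after your sketch of the linearization and barrier/Ikehara argument. Your additional remark that the exponential tails require $\int_{\mathbb R}J(y)e^{\lambda|y|}\dd y<\infty$ for some $\lambda>0$, a hypothesis stronger than the first--moment condition in \textbf{(H1)}, is a valid technical point that the paper does not address explicitly; it is indeed part of the standing assumptions in \cite{sun2011entire}.
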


\begin{lem}\label{lem7}
Suppose \textbf{(H1)}-\textbf{(H3)} hold. Let $u\colon[0,\infty)\times\mathbb{R}\rightarrow[0,1]$ be the solution of \eqref{1}. Suppose that
\begin{equation*}
  0\leq u_0(x)\leq1,~\limsup\limits_{x\rightarrow-\infty}u_0(x)<\alpha,~~\liminf\limits_{x\rightarrow\infty}u_0(x)>\alpha.
\end{equation*}
Then there exist constants $x_1,~x_2,~\kappa>0,~\varepsilon_0>0$ such that
\begin{equation}\label{59}
  U(x-ct-x_1)-\varepsilon_0e^{-\kappa t}\leq u(t,x)\leq U(x-ct-x_2)+\varepsilon_0e^{-\kappa t}
\end{equation}
for all $x\in\mathbb{R}$ and $t>0$. Here $\alpha\in(0,1)$ is the same as in assumption \textbf{(H2)}, i.e. $f(\alpha)=0$. $(U,c)$ is a travelling front of \eqref{1}.
\end{lem}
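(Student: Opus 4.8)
The plan is to run the classical Fife--McLeod ``squeezing'' argument for stability of bistable travelling fronts, adapted to the convolution operator. The only structural facts about $J*u-u$ that the argument uses are that it is translation invariant and that it annihilates constants (the latter because $\int_{\mathbb R}J=1$ by \textbf{(H1)}), so the profile equation \eqref{63} does the real work and nothing beyond \textbf{(H1)}--\textbf{(H3)} is needed. I would split the proof into two stages: first, show that at some finite time $t_0$ the solution is already ``front--like'', i.e.\ trapped between two translates of $U$ up to a small additive constant; second, trap $u$ for $t\ge t_0$ between a sub-- and a super-solution built from $U$ by adding an exponentially decaying correction together with a shift that converges exponentially.

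\emph{Stage 1 (development into a front).} Fix $\eta\in(0,\alpha)$ and $R>0$ with $u_0\le\alpha-\eta$ on $(-\infty,-R]$ and $u_0\ge\alpha+\eta$ on $[R,\infty)$; this is possible by the hypotheses on $u_0$. Let $\hat u$ solve \eqref{53} with the non-increasing datum that equals $\alpha-\eta$ on $(-\infty,-R]$ and $1$ on $(-R,\infty)$, and $\check u$ with the non-decreasing datum that equals $0$ on $(-\infty,R)$ and $\alpha+\eta$ on $[R,\infty)$; since the data are ordered, $\check u\le u\le\hat u$ by Lemma \ref{com}. By the difference-quotient argument of Lemma \ref{prop1} (valid for monotone, not necessarily symmetric, data) $\hat u(t,\cdot)$ stays non-increasing and $\check u(t,\cdot)$ non-decreasing, so the limits $\hat u(t,-\infty)$ and $\check u(t,+\infty)$ exist; letting $x\to\mp\infty$ in the integral form of \eqref{53} and using $\int_{\mathbb R}J=1$ with dominated convergence, they solve the scalar equation $\dot s=-f(s)$ with data $\alpha-\eta$ and $\alpha+\eta$. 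By \textbf{(H2)} these tend to $0$ and $1$ respectively, whence
\[
\limsup_{x\to-\infty}u(t,x)\to0,\qquad \liminf_{x\to+\infty}u(t,x)\to1\qquad\text{as }t\to\infty .
\]
Thus, given a small $\delta_0>0$, there are $t_0>0$ and $X_0>0$ with $u(t_0,x)<\delta_0$ for $x\le-X_0$ and $u(t_0,x)>1-\delta_0$ for $x\ge X_0$; choosing $\xi_-$ very negative and $\xi_+$ very positive and using $0\le u\le1$ gives
\[
U(x-ct_0-\xi_+)-\delta_0\le u(t_0,x)\le U(x-ct_0-\xi_-)+\delta_0,\qquad x\in\mathbb R .
\]

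\emph{Stage 2 (squeezing).} For $t\ge t_0$ put $q(t):=\delta_0 e^{-\kappa(t-t_0)}$ and
\[
w^\pm(t,x):=U\!\big(x-ct-\xi_\mp\pm\sigma(1-e^{-\kappa(t-t_0)})\big)\pm q(t),
\]
with $\sigma,\kappa>0$ to be chosen. Writing $\xi$ for the argument of $U$ and using \eqref{63} to substitute for $J*U-U$, a direct computation gives
\[
\pm\big(\partial_t w^\pm-J*w^\pm+w^\pm+f(w^\pm)\big)=q(t)\Big(\tfrac{\sigma\kappa}{\delta_0}U'(\xi)+f'(\theta^\pm)-\kappa\Big),
\]
where $\theta^\pm$ is between $U(\xi)$ and $w^\pm(t,x)$. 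To make the bracket nonnegative, fix $\kappa\in(0,\tfrac12\min\{f'(0),f'(1)\})$; by continuity of $f'$ and \textbf{(H2)}, pick $\rho>0$ and then $\delta_0\in(0,\rho]$ with $f'\ge\kappa$ on $[-\rho,2\rho]\cup[1-2\rho,1+\rho]$. On the tail set $\{U(\xi)\le\rho\}\cup\{U(\xi)\ge1-\rho\}$ one has $f'(\theta^\pm)\ge\kappa$ and $U'(\xi)\ge0$, so the bracket is $\ge0$. On the core $\{\rho\le U(\xi)\le1-\rho\}$, a compact $\xi$-interval on which $U'\ge m_\rho>0$, one has $|f'(\theta^\pm)|\le M:=\max_{[-\rho,1+\rho]}|f'|$, so the bracket is $\ge\tfrac{\sigma\kappa}{\delta_0}m_\rho-M-\kappa\ge0$ once $\sigma\ge\delta_0(M+\kappa)/(\kappa m_\rho)$. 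With such $\sigma$, $w^+$ is a super-solution and $w^-$ a sub-solution of \eqref{53} on $[t_0,\infty)$, and $w^-(t_0,\cdot)\le u(t_0,\cdot)\le w^+(t_0,\cdot)$ by Stage 1; hence $w^-\le u\le w^+$ for $t\ge t_0$ by Lemma \ref{com} applied to the equation started at $t_0$. Since $0\le\sigma(1-e^{-\kappa(t-t_0)})<\sigma$ and $U$ is increasing, replacing the shift by its limit yields, for $t\ge t_0$,
\[
U(x-ct-x_1)-\varepsilon_0 e^{-\kappa t}\le u(t,x)\le U(x-ct-x_2)+\varepsilon_0 e^{-\kappa t},
\]
with $x_1:=\xi_++\sigma$, $x_2:=\xi_--\sigma$, $\varepsilon_0:=\delta_0 e^{\kappa t_0}$; enlarging $\varepsilon_0$ covers $0<t<t_0$ since there $|u-U(\cdot)|\le1$. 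This is \eqref{59}.

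\emph{Main obstacle.} Stage 2 is a mechanical verification once the ansatz is fixed, so the substance is Stage 1 --- proving the one-sided tail limits for the \emph{nonlocal} equation and then positioning the shifts $\xi_\pm$; I would obtain the tail limits by comparison with monotone profiles whose spatial limits obey the scalar bistable ODE, where $\int_{\mathbb R}J=1$ is what makes the convolution term converge. The one delicate choice in Stage 2 is that $\kappa$ must lie below $\min\{f'(0),f'(1)\}$, so that the correction $q(t)$ is absorbed where $U$ is near the stable states; this is exactly where the conditions $f'(0),f'(1)>0$ in \textbf{(H2)} are essential (and, in variants of the argument placing the correction multiplicatively in the tails, the exponential bounds of Lemma \ref{lem9} would enter).
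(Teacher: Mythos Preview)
Your argument is correct; Stage~2 is the same Fife--McLeod trap as in the paper and Stage~1 is a sound way to produce the initial ordering. The paper, however, dispenses with Stage~1 entirely: it builds the super-solution $v=\min\{1,U(x-ct-p(t))+\varepsilon(t)\}$ directly from $t=0$ and bounds the error term via the \emph{secant} $\Phi(u,\varepsilon)=\frac{1}{\varepsilon}[f(u)-f(u+\varepsilon)]$ rather than $f'(\theta)$. Since $\Phi(0,\varepsilon)=-f(\varepsilon)/\varepsilon<0$ for every $\varepsilon\in(0,\alpha)$, one may take $\varepsilon_0$ as large as any number in $(\limsup_{x\to-\infty}u_0,\alpha)$, and then a single shift $x^*$ makes $U(x-x^*)+\varepsilon_0\ge u_0(x)$ for all $x$. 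Your choice to work with $f'(\theta^\pm)\ge\kappa$ forces $\delta_0$ to be small (so that $\theta^\pm$ stays in a neighbourhood of $0$ or $1$), and that is precisely what makes the preliminary tail-development step necessary. What you gain is a clean decomposition (tails first, then trap), at the cost of the comparison with the monotone profiles $\hat u,\check u$ and the ODE argument for their spatial limits. One small slip: both $\hat u_0$ and $\check u_0$ are non-\emph{decreasing} in $x$, not non-increasing; the relevant limits $\hat u(t,-\infty)$ and $\check u(t,+\infty)$ are the correct ones and your conclusion stands.
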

\begin{proof}
We prove only the right-hand inequality, the other is similar. Functions $p(t)$ and $\varepsilon(t)$ will be chosen such that
\begin{equation}
  v(t,x):=\min\{1,U(x-ct-p(t))+\varepsilon(t)\}
\end{equation}
is a supersolution of IVP \eqref{53}. Let $\varepsilon_0$ be any number such that
\begin{equation*}
\limsup\limits_{x\rightarrow-\infty}u_0(x)<\varepsilon_0<\alpha.
\end{equation*}
Then take sufficiently large $x^*>0$ such that
\begin{equation*}
  U(\tilde{x}-x^*)+\varepsilon_0\geq u_0(\tilde{x})~\text{for all}~\tilde{x}:=x-ct.
\end{equation*}
Let
\begin{equation}
  \Phi(u,\varepsilon):=\left\{
                         \begin{array}{ll}
                           \frac{1}{\varepsilon}\big[f(u)-f(u+\varepsilon)\big], & \hbox{$\varepsilon>0$;} \\
                           -f'(u), & \hbox{$\varepsilon=0$.}
                         \end{array}
                       \right.
\end{equation}
Then $\Phi(u,\varepsilon)$ is continuous for $\varepsilon\geq0$, and for $0<\varepsilon\leq\varepsilon_0$ we have $0<\varepsilon\leq\varepsilon_0<\alpha$. It follows from \textbf{(H2)} that $\Phi(0,\varepsilon)=-\dfrac{f(\varepsilon)}{\varepsilon}<0$ and  $\Phi(0,0)=-f'(0)<0$. Thus we can find some $\kappa>0$ such that
\begin{equation*}
  \Phi(0,\varepsilon)\leq-2\kappa,~\text{for}~0\leq\varepsilon\leq\varepsilon_0.
\end{equation*}
By continuity, we can find some small $d>0$ so that
\begin{equation}\label{65}
  \Phi(u,\varepsilon)\leq-\kappa,~\text{for}~0\leq u\leq d,~0\leq\varepsilon\leq\varepsilon_0.
\end{equation}
In this range, we have
\begin{equation*}
  f(u)-f(u+\varepsilon)\leq -\kappa\varepsilon.
\end{equation*}
Setting $\eta(t):=x-ct-p(t)$, and using the fact that
\begin{equation*}
  cU'+J*U-U-f(U)=0,
\end{equation*}
we obtain that
\begin{equation*}
\begin{split}
   &v_t-J*v+v+f(v) \\
   &=(-c-p'(t))U'(\eta)+\varepsilon'(t)-J*U(\eta)+U(\eta)+f(U(\eta)+\varepsilon(t))\\
   &=-p'(t)U'(\eta)+\varepsilon'(t)-f(U(\eta))+f(U(\eta)+\varepsilon(t)),
\end{split}
\end{equation*}
if $v<1$ (noting the definition of $v$). Thus when $0\leq u\leq d,~0\leq\varepsilon\leq\varepsilon_0$, we have
\begin{equation*}
\begin{split}
   &v_t-J*v+v+f(v) \geq -p'(t)U'(\eta)+\varepsilon'(t)+\kappa\varepsilon(t)\geq +\varepsilon'(t)+\kappa\varepsilon(t),
\end{split}
\end{equation*}
provided $p'(t)<0$, since $U'>0$ (see the definition of $U$). We choose $\varepsilon(t)=\varepsilon_0 e^{-\kappa t}$, which results in
\begin{equation*}
  v_t-J*v+v+f(v)\geq 0.
\end{equation*}
By possibly further reducing the size of $\kappa$ and $d$ and using the same
arguments, we may also obtain that
\begin{equation*}
  v_t-J*v+v+f(v)\geq 0,~\text{when}~1-d\leq u\leq 1.
\end{equation*}

For intermediate values $d\leq u\leq 1-d$, by the monotonicity of $U$ and \textbf{(H2)}, we can find a $c_0>0$ and a $\kappa_1>0$ such that
\begin{equation}\label{66}
  U'\geq c_0~\text{and}~f(U+\varepsilon)-f(U)\geq -\kappa_1\varepsilon.
\end{equation}
Therefore, if $v<1$, then
\begin{equation*}
\begin{split}
   &v_t-J*v+v+f(v)\geq -c_0p'(t)-\kappa\varepsilon_0 e^{-\kappa t}-\kappa_1\varepsilon_0 e^{-\kappa t}.
\end{split}
\end{equation*}
We now set
\begin{equation*}
  p(t)=x^*-c_2+c_2e^{-\kappa t},
\end{equation*}
where $c_2=\dfrac{\varepsilon_0}{\kappa c_0}(\kappa+\kappa_1)$. Thus $p'(t)=-c_2\kappa e^{-\kappa t}=-\dfrac{\varepsilon_0}{ c_0}(\kappa+\kappa_1)e^{-\kappa t}<0$. Consequently,  $p(t)$ is decreasing and approaches a finite limit $x^*-c_2$ as $t\rightarrow\infty$. Thus
\begin{equation*}
  v_t-J*v+v+f(v)\geq 0
\end{equation*}
whenever $v<1$ in this range, and by our assumption on $x^*$, $v$ is a supersolution of \eqref{1} if $v<1$. Therefore
\begin{equation}
\begin{split}
  u(t,x)\leq v(t,x) &\leq U(x-ct-p(t))+\varepsilon_0e^{-\kappa t} \\
    & \leq U(x-ct-x^*+c_2)+\varepsilon_0e^{-\kappa t}
\end{split}
\end{equation}
Taking $x_2=x^*-c_2$, we complete the proof.
\end{proof}

\begin{lem}\label{lem4}
Suppose \textbf{(H1)}-\textbf{(H3)} hold. Let $u\colon[0,\infty)\times\mathbb{R}\rightarrow[0,1]$ be the solution of IVP \eqref{53} with $u_0(x)$ satisfying $0\leq u_0(x)\leq 1$, and
\begin{equation*}
  \limsup\limits_{|x|\rightarrow\infty}u_0(x)<\alpha,~u_0(x)>\alpha+\bar{\alpha},~\text{for}~|x|<\bar{L},
\end{equation*}
where $\bar{\alpha}$ and $\bar{L}$ are positive constants.  Then if $\bar{L}$ is sufficiently large (depending on $\bar{\alpha}$ and $f$), there exist constants $x_1,~x_2,\kappa>0$ and $\varepsilon_0>0$ such that
\begin{equation}\label{15}
\begin{aligned}
  &U(x-ct-x_1)+U(-x-ct-x_1)-1-\varepsilon_0 e^{-{\kappa} t}\leq u(t,x)\cr
  &\quad\leq U(x-ct-x_2)+U(-x-ct-x_2)-1+\varepsilon_0 e^{-{\kappa} t}
\end{aligned}
\end{equation}
for all $x\in\mathbb{R}$ and $t>0$.
\end{lem}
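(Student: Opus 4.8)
The plan is to run the Fife--McLeod method of sub- and super-solutions, but now with comparison functions built from two copies of the bistable front $(U,c)$ of \eqref{63} travelling in opposite directions. Since $c<0$, for fixed $x$ both $U(x-ct-a)$ and $U(-x-ct-a)$ tend to $1$ as $t\to\infty$, so a profile $U(x-ct-a)+U(-x-ct-a)-1$ is a ``$V$-shaped'' bump that invades both to the left and to the right, and \eqref{15} simply asserts that $u$ is squeezed between two such bumps up to an error $\varepsilon_0e^{-\kappa t}$. Concretely I would seek
\[
\overline v(t,x):=\min\bigl\{1,\ U(x-ct-p(t))+U(-x-ct-p(t))-1+\varepsilon(t)\bigr\},
\]
with $\varepsilon(t)=\varepsilon_0e^{-\kappa t}$ and $p(t)$ decreasing to a finite limit, as a supersolution of \eqref{53}, and the mirror function $\underline v(t,x):=\max\{0,\ U(x-ct-q(t))+U(-x-ct-q(t))-1-\varepsilon(t)\}$, with $q(t)$ increasing to a finite limit, as a subsolution. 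Granting these, one takes $x_1=\lim_{t\to\infty}q(t)$ and $x_2=\lim_{t\to\infty}p(t)$ and applies the comparison principle (Lemma~\ref{com}), provided the initial orderings $\underline v(0,\cdot)\le u_0\le\overline v(0,\cdot)$ hold.

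The first step is a routine algebraic reduction: writing $\xi=x-ct-p(t)$, $\eta=-x-ct-p(t)$ and using $J*1=1$, $J*[U(\,\cdot-a)]=(J*U)(\,\cdot-a)$ together with the evenness of $J$ for the reflected term, and the identity $cU'+J*U-U-f(U)=0$, the supersolution inequality $\overline v_t-J*\overline v+\overline v+f(\overline v)\ge0$ collapses, on $\{\overline v<1\}$, to
\[
-p'(t)\bigl[U'(\xi)+U'(\eta)\bigr]+\varepsilon'(t)+\Bigl[f\bigl(U(\xi)+U(\eta)-1+\varepsilon\bigr)-f(U(\xi))-f(U(\eta))\Bigr]\ \ge\ 0,
\]
while on $\{\overline v=1\}$ it is immediate from $f(1)=0$ and the fact that the minimum of supersolutions is a supersolution; the subsolution reduces to the same inequality with $p\to q$, $\varepsilon\to-\varepsilon$, reversed. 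To verify these scalar inequalities I would split $\R$ at each $t$ into three $x$-zones according to the pair $(U(\xi),U(\eta))$. In the \emph{outer} zone, where one of $U(\xi),U(\eta)$ is $\le d$, the combination sits near $0$ and one argues exactly as in the ``$0\le u\le d$'' case of Lemma~\ref{lem7}, using $f'(0)>0$ and $f(s)<0$ for $s<0$. In the \emph{front} zone, where one of $U(\xi),U(\eta)$ lies in $[d,1-d]$ and hence $U'\ge c_0>0$, a small but strictly monotone choice of $p$ (resp.\ $q$), with $|p'(t)|$ of order $\varepsilon_0e^{-\kappa t}$ as in Lemma~\ref{lem7}, absorbs the $O(\varepsilon)$ coming from the $f$-difference. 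In the \emph{middle} zone, where $U(\xi),U(\eta)\ge1-d$, a Taylor expansion at $u=1$ turns the $f$-difference into $f'(1)\varepsilon$ (resp.\ $-f'(1)\varepsilon$) plus an error bounded by $C(1-U(\xi))(1-U(\eta))\le CK^2e^{\lambda_1(\xi+\eta)}$ by Lemma~\ref{lem9}; since $\xi+\eta=-2ct-2p(t)\to+\infty$ this error is $O(e^{-2|\lambda_1||c|t})$, hence dominated by $\tfrac12 f'(1)\varepsilon_0e^{-\kappa t}$ once $\kappa<\min\{2|\lambda_1||c|,\tfrac12 f'(1)\}$, once $\varepsilon_0,d$ are small, and once the onset time of the middle zone is pushed far enough out (automatic for $d$ small, since that zone begins only when $\xi+\eta\gtrsim2U^{-1}(1-d)$). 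Choosing the constants in the order $\kappa$, then $\varepsilon_0$, then $d$ makes the three zones compatible.

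The main obstacle — and where ``$\bar L$ sufficiently large'' genuinely enters — is the subsolution during the \emph{merging phase}, i.e.\ the times at which both $U(\xi),U(\eta)$ still lie in the interior of $(0,1)$ simultaneously. There $f(U(\xi)+U(\eta)-1-\varepsilon)-f(U(\xi))-f(U(\eta))$ is genuinely positive and of order $1$ (at $x=0$ it is $\approx\gamma(U(\xi))$ with $\gamma(a):=f(2a-1)-2f(a)>0$ for $a\in(\tfrac12,1)$ by \textbf{(H2)}), so it cannot be cancelled by the $O(\varepsilon_0)$ terms and the phase must be removed separately. I would do this in two stages, as in the related nonlocal analyses \cite{bates1997traveling,sun2011entire}: first, using $u_0>\alpha+\bar\alpha$ on $(-\bar L,\bar L)$ with $\bar L$ large, compare $u$ from below with a compactly supported expanding bump subsolution (of box or single-front type), whose spreading is guaranteed precisely by $\bar L$ being large relative to $\bar\alpha$ and $f$; this yields a finite time $T$ and a large interval $[-\bar L_1,\bar L_1]$ on which $u(T,\cdot)\ge1-\delta$. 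From time $T$ on, choosing $q(T)$ so that the plateau of $\underline v(T,\cdot)$ is near $1$ (hence $\xi+\eta$ already large there) and $\underline v(T,\cdot)\le u(T,\cdot)$, the two fronts of $\underline v$ are separated for all $t\ge T$, the merging phase is over, there is no interior-overlap zone, and the outer/front/middle analysis applies on $[T,\infty)$; for $0<t<T$ the claimed lower bound in \eqref{15} is vacuous since its right-hand side is then negative. The right inequality of \eqref{15} needs no preliminary step: taking $p(0)$ very negative makes $\overline v(0,\cdot)\equiv1$ on a huge interval about the origin and $\ge\varepsilon_0-o(1)$ outside it, so $\overline v(0,\cdot)\ge u_0$ directly once $\varepsilon_0$ is fixed strictly between $\limsup_{|x|\to\infty}u_0$ and $\alpha$. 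The point I expect to cost the most work is the spreading estimate of the first stage (and the bookkeeping that glues it to the travelling-front comparison), since it is there that the hypothesis on $\bar L$ is really consumed.
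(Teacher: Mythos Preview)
Your overall strategy is sound, but you have taken a detour at both ends that the paper avoids.

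\medskip
\textbf{Upper bound.} You build a two-front supersolution from scratch. The paper instead recycles Lemma~\ref{lem7}: applying it once to $u_0(\cdot)$ and once to $u_0(-\cdot)$ gives
\[
u(t,x)\le \min\bigl\{U(x-ct-x_2),\,U(-x-ct-x_2)\bigr\}+\varepsilon_1 e^{-\kappa_1 t},
\]
and then the exponential tail estimate of Lemma~\ref{lem9} converts the $\min$ into the sum-minus-one form (on $x\ge0$ one has $1-U(x-ct-x_2)\le Ke^{\lambda_1(-ct-x_2)}$, so adding this to the displayed inequality costs only another exponential term). Your direct construction would also work, but it repeats the three-zone analysis that Lemma~\ref{lem7} already packaged.

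\medskip
\textbf{Lower bound.} Your ``merging phase'' is a phantom obstacle, and the two-stage plan (preliminary spreading to time $T$, then front comparison) is unnecessary. The paper kills the merging phase at $t=0$ simply by choosing the initial shift $\zeta(0)$ (your $q(0)$) sufficiently negative. Since $c<0$ and $\zeta$ is increasing, for every $x\ge0$ and $t\ge0$ one has
\[
\zeta_+=x-ct-\zeta(t)\ \ge\ -\zeta(0),
\]
so $U(\zeta_+)$ is uniformly close to $1$ for all time, and $1-U(\zeta_+)\le Ke^{\lambda_1(-ct-\zeta(t))}$ by Lemma~\ref{lem9}. Thus on $x\ge0$ only $U(\zeta_-)$ ever crosses the interior of $(0,1)$; the analysis becomes a one-front computation in $U(\zeta_-)$ with an exponentially small cross term, and your three-zone split (now according to $U(\zeta_-)\in[0,d]$, $[d,1-d]$, $[1-d,1]$) goes through directly from $t=0$. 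By symmetry the same holds on $x\le0$. No preliminary spreading lemma is needed, which is fortunate, since the natural way to prove such a lemma is precisely the two-front subsolution you are trying to build.

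\medskip
\textbf{Where $\bar L$ is actually consumed.} Once $\zeta(0)$ is fixed (large and negative, as above), the subsolution at $t=0$ is a bump of height at most $2U(-\zeta(0))-1-\varepsilon_0'<1-\varepsilon_0'<\alpha+\bar\alpha$, positive only on some interval $[-L_0,L_0]$ with $L_0$ determined by $\zeta(0)$. The initial ordering $\underline v(0,\cdot)\le u_0$ then holds automatically on $|x|\le\bar L$ (since $u_0>\alpha+\bar\alpha$ there) and on $|x|\ge L_0$ (since $\underline v\le0$ there), so it suffices that $\bar L\ge L_0$. That is the sole role of ``$\bar L$ sufficiently large'': it matches the width of $u_0$'s plateau to the width of the already-separated two-front subsolution, not to any dynamical spreading estimate.
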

\begin{proof}
First we prove the right-hand inequality. We get from Lemma \ref{lem7} that there exist constants $x_2$, $\kappa_1>0$ and $\varepsilon_1>0$ such that
\begin{equation*}
u(t,x)\leq U(x-ct-x_2)+\varepsilon_1 e^{-\kappa_1 t}.
\end{equation*}
The same argument applied to $u(t,-x)$ leads to
\begin{equation}\label{67}
u(t,x)\leq U(-x-ct-\bar{x_2})+\bar{\varepsilon_1} e^{-\kappa_1 t}
\end{equation}
for some $\bar{x_2}$, $\bar{\varepsilon_1}>0$, and $\kappa_1>0$. Since decreasing $x_2$ and $\bar{x_2}$ and increasing $\varepsilon_1$ and $\bar{\varepsilon_1}$ strengthens the inequality, we may assume that $x_2=\bar{x_2}<0$ and $\varepsilon_1=\bar{\varepsilon_1}$. Hence
\begin{equation}\label{33}
  u(t,x)\leq\min\{U(x-ct-x_2),U(-x-ct-x_2)\}+\varepsilon_1 e^{-\kappa_1 t}.
\end{equation}
If $x\geq0$, the monotonicity of $U$ implies
\[
  U(x-ct-x_2)\geq U(-x-ct-x_2).
\]
Furthermore, by Lemma \ref{lem9}, there exists some $K>0$ and $\lambda_1<0$ such that
\begin{equation*}
  1-U(x-ct-x_2)\leq1-U(-ct-x_2)\leq K e^{\lambda_1(-ct-x_2)}.
\end{equation*}
Hence from \eqref{33}, for suitable $\varepsilon_0>\varepsilon_1$ and $-x_2$ large enough, we have
\[
  \begin{aligned}
     u(t,x) & \leq U(-x-ct-x_2)+\varepsilon_1 e^{-\kappa_1 t} \cr
       & \leq U(-x-ct-x_2)+\varepsilon_1 e^{-\kappa_1 t}+U(x-ct-x_2)-1+K e^{\lambda_1(-ct-x_2)}\cr
       & \leq U(-x-ct-x_2)+U(x-ct-x_2)-1+\varepsilon_0 e^{-\kappa_1 t}.
   \end{aligned}
\]
A similar argument may be used for $x\leq0$. Thus the right-hand inequality holds.

We now prove the left-hand inequality of \eqref{15}. Define
\begin{equation*}
{v}(t,x):=\max\{U(\zeta+)+U(\zeta-)-1-\varepsilon(t),0\},
\end{equation*}
where $U(\zeta+):=U(x-ct-\zeta(t))$, $U(\zeta-):=U(-x-ct-\zeta(t))$, for suitable $\varepsilon(t)>0$ and $\zeta(t)<0$ (with $\zeta'(t)>0$). We intend to prove that if $v>0$, $v$ is a subsolution. By \eqref{63}, if $v>0$, we easily obtain
\begin{equation*}
\begin{split}
   &v_t-J*v+v+f(v)\\
&=U'(\zeta+)(-c-\zeta'(t))+U'(\zeta-)(-c-\zeta'(t))-\varepsilon'(t)\\
&-J*(U'(\zeta_+)+U'(\zeta_-))+U'(\zeta_+)+U'(\zeta_-)+f\big(U(\zeta+)+U(\zeta-)-1-\varepsilon(t)\big)\\
& =-\zeta'(t)(U'(\zeta_+)+U'(\zeta_-))-\varepsilon'(t)\\
&-f(U(\zeta_+))-f(U(\zeta_-))+f\big(U(\zeta+)+U(\zeta-)-1-\varepsilon(t)\big).
\end{split}
\end{equation*}
Let $\varepsilon_0'$ and $\varepsilon_2$ be positive constants such that
\begin{equation*}
  \alpha<1-\varepsilon_2<1-\varepsilon_0'<\alpha+\bar{\alpha},
\end{equation*}
and let $d$ be as in the proof of Lemma \ref{lem7}.  We then see that for some $\kappa>0$,
\begin{equation*}
\begin{split}
   & f\big(U(\zeta+)+U(\zeta-)-1-\varepsilon(t)\big)-f(U(\zeta_-))\leq -\kappa(1-U(\zeta_+)+\varepsilon(t))
\end{split}
\end{equation*}
for $1-d\leq U(\zeta_-)\leq 1$, $0\leq 1-U(\zeta_+)+\varepsilon(t)\leq \varepsilon_2$. The latter inequality holds if $0\leq \varepsilon(t)\leq \varepsilon_0'$, $x\geq0$, and sufficiently large $-\zeta(t)$. Indeed, noting $c<0$, we obtain, by the monotonicity of $U$ and Lemma \ref{lem9}, that
\begin{equation*}
\begin{split}
1-U(\zeta_+)+\varepsilon(t)&\leq 1-U(-\zeta(t))+\varepsilon_0'\leq Ke^{\lambda_1(-\zeta(t))}+\varepsilon_0'\leq \varepsilon_2.
\end{split}
\end{equation*}
Finally note that
$$-f(U(\zeta_+))=f(1)-f(U(\zeta_+))\leq M(1-U(\zeta_+)),$$
 for $M=\max\limits_{\theta\in[0,1]}|f'(\theta)|>0.$ Therefore, for $1-d\leq U(\zeta_-)\leq 1$, $0\leq \varepsilon(t)\leq \varepsilon_0'$, $x\geq0$, sufficiently large $-\zeta(t)$ and $\zeta'(t)>0$, we have
\begin{equation*}
\begin{split}
   &v_t-J*v+v+f(v)\\
& =-\zeta'(t)(U'(\zeta_+)+U'(\zeta_-))-\varepsilon'(t)-f(U(\zeta_+))-f(U(\zeta_-))\\
&+f\big(U(\zeta+)+U(\zeta-)-1-\varepsilon(t)\big)\\
&\leq -\varepsilon'(t)+M(1-U(\zeta_+))-\kappa(1-U(\zeta_+)+\varepsilon(t))\\
&\leq (M-\kappa)(1-U(\zeta_+))-\varepsilon'(t)-\kappa\varepsilon(t)\\
&\leq (M-\kappa)Ke^{\lambda_1(-ct-\zeta(t))}-\varepsilon'(t)-\kappa\varepsilon(t).
\end{split}
\end{equation*}
Setting  $\varepsilon(t):=\varepsilon_0' e^{-\kappa_2 t}$ for $0<\kappa_2<\kappa$,
we obtain for the above range,
\begin{equation}
\begin{split}
   &v_t-J*v+v+f(v)\\
&\leq (M-\kappa)Ke^{\lambda_1(-ct-\zeta(t))}-(\kappa-\kappa_2)\varepsilon_0' e^{-\kappa_2 t}\leq0,
\end{split}
\end{equation}
provided $\kappa_2<\lambda_1c$ and $-\zeta$ large enough.

A similar argument holds for $0\leq U(\zeta_-)\leq d$, $0\leq \varepsilon(t)\leq \varepsilon_0'$, $x\geq0$, provided $v>0$. Finally for $d\leq U(\zeta_-)\leq 1-d$, $x\geq0$, we have
\begin{equation*}
   \begin{split}
       & f\big(U(\zeta+)+U(\zeta-)-1-\varepsilon(t)\big)-f(U(\zeta_-))\leq \kappa_3 \big(1-U(\zeta_+)+\varepsilon(t)\big), \\
       & U'(\zeta+)+U'(\zeta-)\geq 2c_0>0,
   \end{split}
\end{equation*}
for some $c_0>0$ and $\kappa_3>0$. Therefore,
\begin{equation*}
\begin{split}
&{v}_t-J*{v}+{v}+f({v})\\
& =-\zeta'(t)(U'(\zeta_+)+U'(\zeta_-))-\varepsilon'(t)-f(U(\zeta_+))-f(U(\zeta_-))\\
&+f\big(U(\zeta+)+U(\zeta-)-1-\varepsilon(t)\big)\\
&\leq -2c_0\zeta'(t)+(\kappa_3+M) Ke^{\lambda_1(-ct-\zeta(t))}+(\kappa_2+\kappa_3)\varepsilon_0' e^{-\kappa_2 t}.
\end{split}
\end{equation*}
Choose $\zeta(t)$ such that
\begin{equation*}
  -2c_0\zeta'(t)+(\kappa_3+M) Ke^{-c\lambda_1t}+(\kappa_2+\kappa_3)\varepsilon_0' e^{-\kappa_2 t}=0,
\end{equation*}
with $\zeta(0)$ sufficiently large and negative. Then from above we have
\begin{equation*}
{v}_t-J*{v}+{v}+f({v})\leq 0,
\end{equation*}
for all $x\geq0$ with $v>0$. A similar argument shows that this conclusion holds for $x\leq0$ as well. Now $v$ will be a subsolution if we can prove that $v(0,x)\leq u_0(x)$. We note that
\begin{equation*}
  v(0,x)=U(x-\zeta(0))+U(-x-\zeta(0))-1-\varepsilon_0'<1-\varepsilon_0'<\alpha+\bar{\alpha}\leq u_0(x)
\end{equation*}
for $|x|\leq \bar{L}$, and
\begin{equation*}
  v(0,x)\leq U(-L_0-\zeta(0))-\varepsilon_0'\leq 0\leq u_0(x)
\end{equation*}
for $|x|\geq L_0$, for some $L_0$ depending on $\zeta(0)$. Therefore if $\bar{L}\geq L_0$, we shall have $v(0,x)\leq u_0(x)$ for all $x$. Thus $v(t,x)>0$ is a subsolution to \eqref{53}. It now follows that
\begin{equation*}
  u(t,x)\geq v(t,x)\geq U(x-ct-\zeta(\infty))+U(-x-ct-\zeta(\infty))-1-\varepsilon_0'e^{-\kappa_2 t}.
\end{equation*}
Set $x_1=\zeta(\infty)$ and ${\kappa}=\min\{\kappa_1,\kappa_2\}$.
This completes the proof.
\end{proof}

\begin{lem}
There exit functions $\omega(\epsilon)$ and $T(\epsilon)$, defined for small positive $\epsilon$ and satisfying $\lim\limits_{\epsilon\downarrow0}\omega(\epsilon)=0$, such that if
\begin{equation}\label{2}
  |u(t_0,x)-U(x-ct_0-x_0)|<\epsilon
\end{equation}
for some $x_0$, $t_0>T(\epsilon)$, and all $x<0$, then
\begin{equation*}
  |u(t,x)-U(x-ct-x_0)|<\omega(\epsilon)
\end{equation*}
for all $t>t_0$ and $x<0$.
\end{lem}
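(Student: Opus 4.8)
The plan is to prove the lemma by the classical \emph{squeezing} technique: for $t>t_0$ one traps $u$ between two explicit barriers, each obtained from the front $U$ by adding a small, exponentially decaying amplitude together with a small \emph{moving} phase, and then invokes the comparison principle (Lemma~\ref{com}). The barrier computations are exactly the ones already carried out in the proofs of Lemmas~\ref{lem7} and \ref{lem4} (and, classically, in \cite{bates1997traveling,sun2011entire}); what is new is that every constant has to be tracked quantitatively in $\epsilon$, so that the resulting $\omega(\epsilon)\to0$. I would use throughout that the solutions at hand arise from even data $1_{[-L,L]}$, so by uniqueness (Lemma~\ref{exist}) $u(t,-x)=u(t,x)$; this evenness, together with the monotonicity of $U$, is what lets one promote the one-sided hypothesis at $t_0$ into \emph{two-sided} comparison data --- which the nonlocal term $J*u$ makes mandatory, since it samples $u$ over all of $\mathbb{R}$ and so precludes barriers supported on $\{x<0\}$ alone.

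First, at time $t_0$ I would compare $u(t_0,\cdot)$ with
\[
\overline v_0(x):=\min\{1,\,U(x-ct_0-x_0)+C_0\epsilon\},\qquad
\underline v_0(x):=\max\{0,\,U(x-ct_0-x_0)+U(-x-ct_0-x_0)-1-C_0\epsilon\},
\]
for a constant $C_0\ge1$. On $x<0$ the hypothesis gives $\underline v_0\le u(t_0,\cdot)\le\overline v_0$ at once (for the lower one, using $U(-x-ct_0-x_0)-1\le0$). On $x\ge0$, since $-x\le x$ and $U$ is increasing, $u(t_0,x)=u(t_0,-x)<U(-x-ct_0-x_0)+\epsilon\le\overline v_0(x)$, and $\underline v_0(x)=\underline v_0(-x)\le u(t_0,-x)=u(t_0,x)$ because $\underline v_0$ is even. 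A genuine two-front profile is unavoidable for the lower bound: a single translated front would exceed $u(t_0,\cdot)$ far out on the right.

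Next I would let these profiles evolve. With $q(t):=C_0\epsilon\,e^{-\kappa(t-t_0)}$ and monotone phases $\overline\xi(t),\underline\xi(t)=x_0\mp\gamma C_0\epsilon(1-e^{-\kappa(t-t_0)})$ (the signs chosen, as in the proofs of Lemmas~\ref{lem7} and \ref{lem4}, so that the upper barrier's phase runs slightly ahead and the lower barrier's slightly behind; in particular $|\overline\xi(t)-x_0|,|\underline\xi(t)-x_0|\le\gamma C_0\epsilon$), put $\overline v(t,x):=\min\{1,\,U(x-ct-\overline\xi(t))+q(t)\}$ and $\underline v(t,x):=\max\{0,\,U(x-ct-\underline\xi(t))+U(-x-ct-\underline\xi(t))-1-q(t)\}$. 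For $\kappa,\gamma,d>0$ depending only on $J$ and $f$ (through $U$ and the constants of Lemma~\ref{lem9}), the computation of Lemma~\ref{lem7} --- and, for the two-front object, of Lemma~\ref{lem4} --- shows that $\overline v$ is a supersolution and $\underline v$ a subsolution of \eqref{53} on $(t_0,\infty)\times\mathbb{R}$: in the regions $U\le d$ and $U\ge1-d$ the strict signs $f'(0)>0$, $f'(1)>0$ make the linearization contract, so the decaying amplitude $q$ wins; for $d\le U\le1-d$ one has $U'\ge c_0>0$, so the moving phase absorbs the error. Since $\underline v(t_0,\cdot)\le u(t_0,\cdot)\le\overline v(t_0,\cdot)$ by the previous step, Lemma~\ref{com} gives $\underline v\le u\le\overline v$ for all $t>t_0$.

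It remains to read this off on $x<0$. There, by Lemma~\ref{lem9},
\[
u(t,x)\ \ge\ U(x-ct-\underline\xi(t))-\bigl(1-U(-x-ct-\underline\xi(t))\bigr)-q(t)\ \ge\ U(x-ct-\underline\xi(t))-Ke^{\lambda_1(-x-ct-\underline\xi(t))}-C_0\epsilon,
\]
and $c<0$, $-x\ge0$ give $-x-ct-\underline\xi(t)\ge|c|t_0-x_0-\gamma C_0\epsilon$, which exceeds $|\lambda_1|^{-1}\log(K/\epsilon)$ once $t_0>T(\epsilon)$ with $T(\epsilon)$ large enough (here one uses that in the setting of the lemma the front position $x_0+ct_0$ stays bounded, so $|c|t_0-x_0$ grows with $t_0$), so the middle term is $\le\epsilon$. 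Together with the matching upper bound $u(t,x)\le U(x-ct-\overline\xi(t))+C_0\epsilon$, and using that $U$ is Lipschitz (from \eqref{63} and $|J*U-U|\le2$) to replace $\overline\xi(t),\underline\xi(t)$ by $x_0$ at the extra cost $\gamma C_0\epsilon\sup_{\mathbb{R}}|U'|$, one obtains $|u(t,x)-U(x-ct-x_0)|<C\epsilon$ for all $t>t_0$ and $x<0$; taking $\omega(\epsilon):=C\epsilon$, so that $\lim\limits_{\epsilon\downarrow0}\omega(\epsilon)=0$, completes the proof. The one genuinely delicate point is this quantitative phase control --- that an $\epsilon$-perturbation of the profile at $t_0$ displaces the limiting front by only $O(\epsilon)$ --- which is exactly why the barriers must carry a phase slaved to the decaying amplitude $q$ rather than a fixed one; the nonlocality of $J*u$ is a secondary nuisance, disposed of once and for all by the evenness-plus-monotonicity reduction in the second step.
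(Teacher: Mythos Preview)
Your approach is correct and in fact handles the nonlocal structure more carefully than the paper's own argument. The paper follows the classical Fife--McLeod route: it builds a single-front subsolution $v(t,x)=\max\{U(x-ct-p(t))-\varepsilon_0e^{-\kappa t},0\}$, checks $v(t_0,\cdot)\le u(t_0,\cdot)$ on $\{x<0\}$ from the hypothesis, uses Lemma~\ref{lem4} to secure $u(t,0)>v(t,0)$ for $t\ge t_0$, and then invokes Lemmas~\ref{max}--\ref{com} to conclude $v\le u$ on the half-line. For the local Laplacian that is the standard half-space comparison with Dirichlet data at $x=0$, but Lemmas~\ref{max}--\ref{com} are stated only on the whole line, and the convolution $J*w$ at points $x<0$ feels $w$ on all of $\{x\ge0\}$, not just at $x=0$; indeed the single-front subsolution satisfies $v(t,x)\to1-\varepsilon_0e^{-\kappa t}$ while $u(t,x)\to0$ as $x\to+\infty$, so the exterior ordering fails. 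Your device---exploiting the evenness $u(t,-x)=u(t,x)$ to promote the one-sided hypothesis to a two-sided ordering at $t_0$, and replacing the lower barrier by the symmetric two-front profile of Lemma~\ref{lem4}---allows Lemma~\ref{com} to be applied on all of $\mathbb{R}$, which is exactly what it delivers. This is a genuine improvement in the nonlocal setting.

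One wording slip to fix: ``the front position $x_0+ct_0$ stays bounded, so $|c|t_0-x_0$ grows with $t_0$'' is self-contradictory (if $x_0+ct_0$ is bounded then $|c|t_0-x_0=-2ct_0-(x_0+ct_0)$ and your claim would require the wrong sign). What you actually need---and what does hold---is that the \emph{phase} $x_0$ stays bounded: letting $x\uparrow0$ in \eqref{2} and comparing with the lower bound of Lemma~\ref{lem4} forces $U(-ct_0-x_0)\ge 1-\epsilon-\varepsilon_0e^{-\kappa t_0}-K e^{\lambda_1(-ct_0-x_1)}$, hence $-ct_0-x_0=|c|t_0-x_0$ is large once $t_0>T(\epsilon)$. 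With that correction, your interaction-term estimate $Ke^{\lambda_1(-x-ct-\underline\xi(t))}\le\epsilon$ goes through and you obtain $\omega(\epsilon)=C\epsilon$ as claimed.
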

\begin{proof}
Define $v(t,x)=\max\{U(x-ct-p(t))-\varepsilon_0e^{-\kappa t},0\}$, where $p(t)=c_1+c_2e^{-\kappa t}$. If $\kappa>0$ is sufficiently small and $c_2=c_{\kappa}\varepsilon_0$ for a certain constant $c_{\kappa}$ depending only on $\kappa$, then for arbitrary $c_1$ and $\varepsilon_0$, using the same procedure as in Lemma \ref{lem7}, we have
\begin{equation*}
  {v}_t-J*{v}+{v}+f({v})\leq 0, ~\text{if}~v>0.
\end{equation*}
From \eqref{2}, we have
\begin{equation*}
  u(t_0,x)\geq U(x-ct_0-x_0)-\epsilon.
\end{equation*}
If we now set $\varepsilon_0=\epsilon e^{\kappa t_0}$, $c_2=\epsilon c_{\kappa}e^{\kappa t_0}$, and $c_1=x_0-\epsilon c_{\kappa}$, then
\begin{equation*}
  v(t_0,x)=U(x-ct_0-x_0)-\epsilon\leq u(t_0,x).
\end{equation*}
From Lemma \ref{lem4} and Lemma \ref{lem9}, we have, for some $K>0$ and $\lambda_1<0$
\begin{equation*}
   \begin{split}
   u(t,0)&\geq 2U(-ct-x_1)-1-\varepsilon_0' e^{-{\kappa'} t}\\
   &\geq1-\varepsilon_0' e^{-{\kappa'} t}-2Ke^{\lambda_1(-ct-x_1)}.
   \end{split}
\end{equation*}
On the other hand, for $t\geq t_0$, if $v>0$,
\begin{equation*}
  v(t,0)=U(-ct-p(t))-\varepsilon_0e^{-\kappa t}<1-\varepsilon_0e^{-\kappa t}=1-\epsilon e^{\kappa t_0-\kappa t}.
\end{equation*}
Thus
\begin{equation}
   u(t,0)-v(t,0)\geq -\varepsilon_0' e^{-{\kappa'} t}-2Ke^{\lambda_1(-ct-x_1)}+\epsilon e^{\kappa t_0-\kappa t}.
\end{equation}
Choose $\kappa$ such that $0<\kappa<\kappa'$, $\kappa<\lambda_1c$. Thus
\begin{equation}
   u(t,0)-v(t,0)\geq [\epsilon-(\varepsilon_0'+2Ke^{-\lambda_1x_1})e^{-\kappa t_0}] e^{\kappa t_0-\kappa t}>0
\end{equation}
for sufficiently large $t_0$. Therefore, it follows from Lemmas \ref{max} and \ref{com} that $v(t,x)\leq u(t,x)$ in the region $x<0$ and $t\geq t_0$. That is to say, if $v>0$, then
\begin{equation*}
  u(t,x)\geq v(t,x)=U(x-ct-p(t))-\varepsilon_0e^{-\kappa t}\geq U(x-ct-x_0)-\omega(\epsilon),
\end{equation*}
for $x<0$ and $t\geq t_0$. A similar argument can be used to show that $u(t,x)\leq U(x-ct-x_0)+\omega(\epsilon)$. This completes the proof.
\end{proof}

%
%
%

\section{Proof of Theorem 1.1}
We can now prove our main result. Assume again that $J$ satisfies \textbf{(H1)}, and $f$ satisfies \textbf{(H2)}-\textbf{(H3)}. Let $u^L$ be the solution of the initial value problem
  \begin{align}
      &u_t=J*u-u-f(u),~x\in\R,~t\geq0, \label{57}\\
      &u(0,x)=1_{[-L,L]}(x),x\in\mathbb{R}. \label{58}
  \end{align}
Define two sets $M_0$ and $M_1$ as follows:
\begin{equation}
  M_0:=\{L>0\colon\lim_{t\rightarrow\infty} \|u^L(t,\cdot)\|_{L^{\infty}(\mathbb{R})}=0\},
\end{equation}
and
\begin{equation}
  M_1:=\{L>0\colon u^L(t,\cdot)\rightarrow 1,~\text{as}~ t\rightarrow\infty,~\text{uniformly on compact sets in}~\mathbb{R}\}.
\end{equation}
Let $T$ deonote the threshold set
\begin{equation}
  T:=(0,\infty)\setminus(M_0\cup M_1).
\end{equation}
 The comparison principle (Lemma \ref{com}) implies the threshold set $T$, if nonempty, lies between $M_0$ and $M_1$. The following lemma gives a necessary and sufficient condition for checking whether a value $L$ is an element of $M_0$.


\begin{prop}\label{prop3}
Let $\beta$ be such that $\int_0^{\beta} f(s)\,{\dif} s=0$. Then $L\in M_0$ if and only if $\lim\limits_{t\rightarrow\infty}u^L(t,0)<\beta$.
\end{prop}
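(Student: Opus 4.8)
Before seeing the authors' argument, here is how I would approach Proposition \ref{prop3}.

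The plan is to first reduce the statement to a claim about the single number $m:=\lim_{t\to\infty}u^L(t,0)$. By Lemma \ref{prop1} the profile $u^L(t,\cdot)$ attains its maximum at $x=0$, so $u^L(t,0)=\|u^L(t,\cdot)\|_{L^{\infty}(\mathbb{R})}$; by Lemma \ref{prop2} this quantity is eventually monotone and, being bounded in $[0,1]$, the limit $m$ exists. Hence $L\in M_0\iff m=0$, and since $\beta>0$ the implication ``$L\in M_0\Rightarrow m<\beta$'' is immediate. It therefore remains to show: if $m<\beta$ then $m=0$. I would argue by contradiction, supposing $0<m<\beta$, and split into the cases $m\le\alpha$ and $m\in(\alpha,\beta)$.

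If $m\le\alpha$, then for all large $t$ (for $m<\alpha$), and for all $t$ (for $m=\alpha$, after using Lemma \ref{max} to rule out $u^L(\cdot,0)\equiv\alpha$ on a time interval, which by the strong maximum principle would force $u^L\equiv\alpha$, against the spatial decay of $u^L(t,\cdot)$), one has $u^L(t,x)\le u^L(t,0)<\alpha$ for all $x$, hence $f(u^L)\ge0$ and $u^L$ is a subsolution of the linear equation $w_t=J*w-w$. Since $u^L(t_0,\cdot)\in L^1(\mathbb{R})$ and the associated semigroup sends $L^1$ to $0$ in $L^\infty$ as $t\to\infty$ (its kernel $G_t=e^{-t}\sum_n\frac{t^n}{n!}J^{*n}$ satisfies $\|G_t\|_{L^\infty}\to0$), we get $\|u^L(t,\cdot)\|_{L^\infty}\to0$, i.e.\ $m=0$, a contradiction. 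So necessarily $m\in(\alpha,\beta)$.

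For that remaining case I would exploit the gradient-flow structure: $\frac{d}{dt}E[u^L(t,\cdot)]=-\|u^L_t(t,\cdot)\|_{L^2}^2\le0$, and since $m<\beta$ we have $u^L(t,x)<\beta$, hence $F(u^L(t,x))\ge0$, for all large $t$, so $E[u^L(t,\cdot)]\ge0$ eventually. Thus $E[u^L(t,\cdot)]$ decreases to a finite $E_\infty\ge0$ and $\int^\infty\|u^L_t\|_{L^2}^2\,dt<\infty$, so there is $t_n\to\infty$ with $\|u^L_t(t_n,\cdot)\|_{L^2}\to0$. Using the equicontinuity of Lemma \ref{lem5} with Arzelà--Ascoli and a diagonal argument, along a subsequence $u^L(t_n,\cdot)\to\phi$ locally uniformly, with $\phi\in C(\mathbb{R})$, $0\le\phi\le m$, $\phi$ even and non-increasing in $|x|$ (Lemma \ref{prop1}), $\phi(0)=m$; passing to the limit in $u^L_t(t_n,\cdot)=J*u^L(t_n,\cdot)-u^L(t_n,\cdot)-f(u^L(t_n,\cdot))$ gives $J*\phi-\phi-f(\phi)=0$, so $\phi$ is a bounded stationary solution. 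Its limit $\ell=\lim_{|x|\to\infty}\phi(x)$ is a constant stationary solution with $\ell\le m<\beta<1$, hence $\ell\in\{0,\alpha\}$; but $\ell=\alpha$ would give $F(\phi)\to F(\alpha)>0$ and so $\int_{\mathbb{R}}F(\phi)=+\infty$, contradicting $E[\phi]\le\liminf_n E[u^L(t_n,\cdot)]=E_\infty<\infty$ (Fatou, valid because $F\circ u^L(t_n,\cdot)\ge0$). Thus $\ell=0$ and $\phi$ is a nontrivial, even, $|x|$-monotone, bounded stationary pulse decaying to $0$ with $\sup\phi=m\in(\alpha,\beta)$.

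The last and hardest step is to rule out such a $\phi$. In the classical case ($J*{-}\,\mathrm{Id}$ replaced by $\Delta$) this is immediate: multiplying $\phi''=f(\phi)$ by $\phi'$ gives the pointwise conservation law $\tfrac12(\phi')^2=F(\phi)$, which at the maximum forces $F(\sup\phi)=0$, hence $\sup\phi=\beta$. The nonlocal operator admits no such pointwise ``momentum'' identity, and I expect this to be the main obstacle; the substitute is an argument at the level of the energy $E[\phi]=E_1[\phi]+\int_{\mathbb{R}}F(\phi)$ (with both summands strictly positive when $m\in(\alpha,\beta)$), combined with a comparison/sliding argument against the travelling front $(U,c)$ of \eqref{1} (recall $c<0$), or, alternatively, invoking the classification of bounded stationary solutions of \eqref{1} available in the companion literature, to conclude that a pulse decaying to $0$ must satisfy $\sup\phi\ge\beta$. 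Granting this, $\phi\equiv0$, so $m=\phi(0)=0$, contradicting $m>0$; therefore $m=0$, i.e.\ $L\in M_0$, which completes the proof.
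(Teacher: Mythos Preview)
Your proposal has a genuine gap: the final and, as you yourself acknowledge, hardest step---ruling out a nontrivial even stationary pulse $\phi$ with $\sup\phi=m\in(\alpha,\beta)$---is not actually carried out. You sketch two possible routes (an energy/sliding argument against the front $(U,c)$, or invoking a classification from ``companion literature''), but neither is executed, and you conclude with ``Granting this, $\phi\equiv0$.'' In the nonlocal setting such a classification of bounded ground states is itself a nontrivial result, not something one can simply cite; nor is it clear how an energy inequality alone forces $\sup\phi\ge\beta$ when no pointwise first integral is available. Without this step the case $m\in(\alpha,\beta)$ remains open and the proof is incomplete. (There is also a smaller issue in your $m=\alpha$ sub-case: since $u^L(0,0)=1$ and, by Lemma~\ref{prop2}, $u^L(t,0)$ is non-increasing on $[0,t^*)$, one may well have $u^L(t,0)>\alpha$ for every finite $t$; your parenthetical about ruling out $u^L\equiv\alpha$ does not address this, so the inequality $u^L(t,0)<\alpha$ is unjustified.)

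The paper's argument sidesteps all of this with a direct comparison trick you may find instructive. Once $u^L(t,x)\le\tfrac12(u_*^L+\beta)<\beta$ for all $x$ and $t\ge t_0$, the authors build, for each $\varepsilon>0$, an auxiliary bistable nonlinearity $\tilde f$ with $\tilde f\le f$ on $[0,\tfrac12(u_*^L+\beta)]$, $\tilde f\equiv0$ on $[0,\varepsilon]$, and---crucially---$\int_0^1\tilde f>0$. By \eqref{64} the travelling front $\phi$ of the $\tilde f$-equation connecting $\varepsilon$ to $1$ has \emph{positive} speed $\nu>0$. Since $u^L$ is a subsolution of the $\tilde f$-equation for $t\ge t_0$ and $u^L(t_0,\cdot)$ decays at infinity, a suitable translate of $\phi$ lies above $u^L(t_0,\cdot)$; the front then sweeps rightward and forces $u^L(t,0)\le\phi(x_0-\nu t)\to\varepsilon$. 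As $\varepsilon>0$ is arbitrary, $m=0$. This handles all $m<\beta$ in one stroke, with no case split and no need to classify stationary pulses.
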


\begin{proof}
It follows from Lemma \ref{com} and Lemma \ref{prop2} that $u_*^L:=\lim\limits_{t\rightarrow\infty}u^L(t,0)$ is well defined and non-decreasing in $L$. It suffices to show that if $u_*^L<\beta$, then $L\in M_0$. Suppose now $u_*^L<\beta$. Consequently, by Lemma 2.4 and Lemma 2.5, it is possible to find a $t_0$ such that for all $x\in\mathbb{R}$ and $t\geq t_0$,
\begin{equation*}
u^L(t,x)\leq \frac{1}{2}\big(u_*^L+\beta\big).
\end{equation*}
Given any $\varepsilon>0$, we can define a Lipschitz function $\tilde{f}\colon[0,1]\rightarrow\mathbb{R}$ such that
\begin{equation*}
\left\{
\begin{aligned}
  &\tilde{f}=0 ~\text{on}~ [0,\varepsilon],~\tilde{f}'(\varepsilon)>0, \\
  &\tilde{f}\leq f ~\text{on}~ \big(\varepsilon,\frac{1}{2}(u_*^L+\beta)\big] ~\text{and}~ \tilde{f}~ \text{has a single zero within this interval},\\
  &\tilde{f}<0 ~\text{on}~ \big(\frac{1}{2}(u_*^L+\beta),1\big), \tilde{f}(1)=0, \tilde{f}'(1)>0,\\
  &\int_0^1 \tilde{f}(\theta)\,{\dif} \theta>0.
  \end{aligned}
  \right.
\end{equation*}
Since $\tilde{f}\leq f$ on $\big(0,\frac{1}{2}(u_*^L+\beta)\big)$, starting from time $t_0$, we have
\begin{equation*}
  u^L_t\leq J*u^L-u^L-\tilde{f}(u^L),
\end{equation*}
that is, $u^L$ is a subsolution of the equation
\begin{equation}\label{9}
v_t= J*v-v-\tilde{f}(v).
\end{equation}
Let $v(t,x)=\phi(x-\nu t)$ be a travelling front of \eqref{9} satisfying $\phi(-\infty)=\varepsilon$ and $\phi(\infty)=1$. It follows by \eqref{64} that in this case $\nu>0$. Note that
\begin{equation*}
  u^L_t\leq J*u^L-u^L+Mu^L,
\end{equation*}
where $M=\max\limits_{s\in[0,1]}\{-f'(s)\}$. Hence by Lemma \ref{max} and Lemma \ref{com}, we have
\begin{equation*}
  u^L(t,x)\leq e^{Mt}w(t,x),
\end{equation*}
where $w$ is the solution of the following Cauchy problem:
\begin{equation*}
   \begin{split}
       & w_t=J*w-w, w(0,x)=u_0(x).
   \end{split}
\end{equation*}
Applying the Fourier transform to this equation, we have $\hat{w}(t,\xi)=e^{(\hat{J}(\xi)-1)t}\hat{u_0}(\xi).$ Taking the inverse Fourier transform, it follows from $J\in L^1(\mathbb{R})$ and the compactness of the support of $u_0(x)=1_{[-L,L]}(x)$ that, $w(t,x)\rightarrow0,~\text{as}~|x|\rightarrow\infty$, for fixed $t_0<\infty$. And so
\begin{equation*}
  u^L(t_0,x)\rightarrow0,~\text{as}~|x|\rightarrow\infty,~\text{for fixed}~t_0<\infty.
\end{equation*}
Consequently, for sufficiently large $x_0$, we have (recall that $\phi(\infty)=1$, $u^L(t_0,\cdot)\in(0,1)$)
\begin{equation*}
u^L(t_0,x)\leq \phi(x+x_0-\nu t_0).
\end{equation*}
Since $\phi$ is a solution and $u^L$ is a subsolution of \eqref{9}, it immediately follows that
\begin{equation*}
u^L(t,x)\leq \phi(x+x_0-\nu t)~\text{ for all}~ t\geq t_0.
\end{equation*}
Then $u^L(t,0)\leq \phi(x_0-\nu t)\rightarrow\varepsilon$, as $t\rightarrow\infty$ (note that $\nu>0$). Thus for any $\varepsilon>0$, $u_*^L\leq\varepsilon$. Therefore, $u_*^L=0$ and $L\in M_0$.
\end{proof}

\begin{prop}\label{prop7}
$M_0$ is open.
\end{prop}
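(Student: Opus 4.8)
The plan is to combine three ingredients. First, the comparison principle (Lemma \ref{com}) makes $u^{L}$ monotone in $L$, so every $L<L_0$ lies in $M_0$ as soon as $L_0$ does; thus only perturbations $L_1>L_0$ require work. Second, although the $L^{\infty}$ continuous dependence of Lemma \ref{exist} is useless here (the data $1_{[-L,L]}$ stay at fixed $L^{\infty}$-distance $1$ from one another), they do converge in $L^{1}$, and the difference of two solutions can be controlled in $L^{1}$ and then upgraded to $L^{\infty}$ using the uniform (in $t$ and in the parameter) spatial regularity furnished by Lemma \ref{lem5}. Third, once a solution lies everywhere below a small level $\epsilon$ at some time, the spatially constant function $\epsilon e^{-ct}$ is a supersolution, forcing decay to $0$.

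In detail, I would fix $L_0\in M_0$. For $0<L<L_0$ we have $1_{[-L,L]}\le1_{[-L_0,L_0]}$, so $0\le u^{L}\le u^{L_0}\to0$ uniformly by Lemma \ref{com}, i.e. $L\in M_0$. Hence it suffices to find $\delta>0$ with $(L_0,L_0+\delta)\subset M_0$; together with $L_0\in M_0$ and the previous sentence this gives $(0,L_0+\delta)\subset M_0$, so $L_0$ is interior, and since $L_0$ was arbitrary $M_0$ is open.

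Given $L_1>L_0$, I would set $w:=u^{L_1}-u^{L_0}$, nonnegative by Lemma \ref{com}. Subtracting the two copies of \eqref{57} gives $w_t=J*w-(1+g)w$ with $g(t,x)=\int_0^1 f'(u^{L_0}+sw)\dd s$, and $1+g>0$ by \textbf{(H2)}. Integrating in $x$ and using $\int_{\R}(J*w)\dd x=\int_{\R}w\dd x$ (Tonelli, $w\ge0$, $\int_{\R}J=1$), one gets $\frac{\dd}{\dd t}\|w(t,\cdot)\|_{L^{1}}=\|w(t,\cdot)\|_{L^{1}}-\int_{\R}(1+g)w\dd x\le\|w(t,\cdot)\|_{L^{1}}$, so Gronwall yields $\|w(t,\cdot)\|_{L^{1}}\le2(L_1-L_0)e^{t}$ (the $x$-integrability of $w(t,\cdot)$ following by comparison with the linear flow $W_t=J*W$ of the compactly supported datum $w(0,\cdot)=1_{\{L_0<|x|\le L_1\}}$). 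On the other hand, by Lemma \ref{lem5} both $u^{L_0}(t,\cdot)$ and $u^{L_1}(t,\cdot)$, hence $w(t,\cdot)$, are Lipschitz in $x$ with a constant $C$ depending only on $J$ and $f$; and any nonnegative $C$-Lipschitz $\varphi$ on $\R$ satisfies $\|\varphi\|_{L^{\infty}}^{2}\le C\|\varphi\|_{L^{1}}$ (if $\varphi(x_0)>\|\varphi\|_{L^{\infty}}-\rho$ then $\varphi\ge\|\varphi\|_{L^{\infty}}-\rho-C|x-x_0|$, whence $\|\varphi\|_{L^{1}}\ge(\|\varphi\|_{L^{\infty}}-\rho)^{2}/C$; let $\rho\downarrow0$). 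Combining,
\begin{equation*}
  \|u^{L_1}(t,\cdot)-u^{L_0}(t,\cdot)\|_{L^{\infty}(\R)}\le\sqrt{2C(L_1-L_0)}\,e^{t/2},\qquad t>0.
\end{equation*}
Now I would fix $\epsilon>0$ small enough that $f(s)\ge\frac12 f'(0)s=:cs$ for $0\le s\le\epsilon$ (possible since $f(0)=0$, $f'(0)>0$, $f\in C^{1}$), choose $t_0$ with $\|u^{L_0}(t_0,\cdot)\|_{L^{\infty}}<\epsilon/2$ (possible since $L_0\in M_0$), and then choose $\delta>0$ so small that $0<L_1-L_0<\delta$ forces the displayed bound at $t=t_0$ to be $<\epsilon/2$, hence $\|u^{L_1}(t_0,\cdot)\|_{L^{\infty}}<\epsilon$. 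Since $J*\bar v-\bar v-f(\bar v)=-f(\bar v)\le-c\bar v=\bar v_t$ for $\bar v(t):=\epsilon e^{-c(t-t_0)}$ (using $0<\bar v\le\epsilon$), $\bar v$ is a supersolution on $[t_0,\infty)$ dominating $u^{L_1}(t_0,\cdot)$, so Lemma \ref{com} gives $u^{L_1}(t,\cdot)\le\bar v(t)\to0$, i.e. $L_1\in M_0$. This proves $(L_0,L_0+\delta)\subset M_0$.

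The step I expect to be the main obstacle is precisely this $L^{1}\!\to\!L^{\infty}$ upgrade of continuous dependence: since the operator $u\mapsto J*u-u$ has no smoothing effect, one cannot recover $L^{\infty}$ control after time $0$ from $L^{1}$ control alone, and the argument hinges entirely on the solutions $u^{L}(t,\cdot)$ being equi-Lipschitz in $x$, uniformly in $t$ and in $L$ (Lemma \ref{lem5}). Everything else --- the monotonicity in $L$, the elementary integral $L^{1}$ estimate, and the constant supersolution $\epsilon e^{-ct}$ --- is routine.
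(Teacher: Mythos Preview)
Your proof is correct and follows the same skeleton as the paper's: both reduce openness to showing that if $L_0\in M_0$ then $u^{L_1}$ is uniformly small at some fixed time $t_0$ for $L_1$ slightly above $L_0$, and then conclude extinction from smallness. The paper organises this as a contradiction (supposing $\sup M_0\in M_0$) and simply invokes ``continuous dependence of $u^{L}$ on the initial value'' to get $|u^{\tilde L}(t_0,\cdot)-u^{L_0}(t_0,\cdot)|<\varepsilon$, then asserts $\tilde L\in M_0$.

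Where you genuinely add something is the continuous-dependence step. You correctly note that Lemma~\ref{exist} as stated gives $L^{\infty}$-continuous dependence on $u_0$, which is useless for the data $1_{[-L,L]}$ since these sit at mutual $L^{\infty}$-distance $1$; the paper does not address this. Your workaround --- an $L^{1}$ Gronwall for $w=u^{L_1}-u^{L_0}\ge0$ (using $\int_{\R}J*w=\int_{\R}w$ and $1+f'>0$), upgraded to $L^{\infty}$ via the uniform spatial Lipschitz bound of Lemma~\ref{lem5} and the elementary inequality $\|\varphi\|_{L^{\infty}}^{2}\le C\|\varphi\|_{L^{1}}$ for nonnegative $C$-Lipschitz $\varphi$ --- fills this gap cleanly. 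Your final ``once small, stays small'' step via the spatially constant supersolution $\epsilon e^{-c(t-t_0)}$ is also more explicit than the paper's, which just writes ``$\lim_{t\to\infty}u^{\tilde L}(t,x)=0$'' without naming the mechanism (implicitly Proposition~\ref{prop3}). In short: same strategy, but your version is the more complete of the two.
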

\begin{proof}
It follows from Proposition \ref{prop3} that $L\in M_0$ if $L$ is small enough. Thus $M_0\neq\emptyset$. Set $L^0:=\sup M_0$, then $0<L^0\leq\infty$, and the comparison principle (Lemma 2.3) gives $(0,L^0)\subset M_0$. It remains to show that $L^0\notin M_0$ if $L^0<\infty$. Suppose the contrary, then given any $\varepsilon_0>0$, there would exist some $t_0>0$ such that
\begin{equation*}
u^{L_0}(t,\cdot)\leq\varepsilon_0 ~\text{for all}~ t\geq t_0.
\end{equation*}
By the continuous dependence of $u^L$ on initial value, for arbitrary small positive constant $\varepsilon$, we can find a $\delta>0$ sufficiently small such that
\begin{equation*}
|u^{\tilde{L}}(t,x)-u^{L_0}(t,x)|<\varepsilon,
\end{equation*}
for $\tilde{L}:=L_0+\delta$. Consequently, if $t\geq t_0$,
\begin{equation*}
u^{\tilde{L}}(t,x)\leq \varepsilon_0+\varepsilon,
\end{equation*}
which follows that $\lim\limits_{t\rightarrow\infty}u^{\tilde{L}}(t,x)=0$ and $\tilde{L}\in M_0$. This contradicts the definition of $L_0$.
\end{proof}

\begin{prop}\label{prop4}
$L\in M_1$ if and only if $\lim\limits_{t\rightarrow\infty}u^L(t,0)>\beta$.
\end{prop}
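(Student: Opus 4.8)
The plan is as follows. The ``only if'' implication is immediate: if $L\in M_1$ then $u^L(t,\cdot)\to 1$ uniformly on compact sets, so $u^L(t,0)\to 1$, and $\beta\in(\alpha,1)$ since by \textbf{(H3)} the potential $F(s)=\int_0^s f$ vanishes at $0$, is positive on $(0,\alpha]$, strictly decreasing on $[\alpha,1]$ and has $F(1)<0$; hence $\lim_{t\to\infty}u^L(t,0)=1>\beta$.

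For the converse, assume $u_*^L:=\lim_{t\to\infty}u^L(t,0)>\beta$ (the limit exists by Lemma~\ref{prop2} and $0\le u^L\le1$). Put $\bar\alpha:=\tfrac12(\beta-\alpha)>0$ and let $\bar L=\bar L(\bar\alpha,f)$ be the threshold width provided by Lemma~\ref{lem4}. I distinguish two cases. If there is a time $t_0$ with $u^L(t_0,\bar L)>\tfrac12(\alpha+\beta)$, then radial monotonicity (Lemma~\ref{prop1}) gives $u^L(t_0,x)>\tfrac12(\alpha+\beta)=\alpha+\bar\alpha$ for $|x|<\bar L$, while $\limsup_{|x|\to\infty}u^L(t_0,x)=0<\alpha$ by the Fourier argument of Proposition~\ref{prop3} (comparison with $w_t=J*w-w$ and compactness of the support of the datum). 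Thus $u^L(t_0,\cdot)$ satisfies the hypotheses of Lemma~\ref{lem4}; applying it with initial time $t_0$ yields the two-front lower bound of~\eqref{15}, and since $c<0$ (see~\eqref{64}) each translated front $U(\pm x-c(t-t_0)-x_1)$ tends to $1$ as $t\to\infty$ uniformly for $x$ in a compact set, so $u^L(t,\cdot)\to1$ uniformly on compacts and $L\in M_1$.

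It remains to exclude the other case, $u^L(t,\bar L)\le\tfrac12(\alpha+\beta)$ for every $t>0$. By Lemma~\ref{prop1} this forces $u^L(t,x)\le\tfrac12(\alpha+\beta)<\beta$ for all $|x|\ge\bar L$ and all $t$, so $\{x:u^L(t,x)>\beta\}\subset[-\bar L,\bar L]$; since $F\ge0$ on $[0,\beta]$ the energy $E[u^L(t,\cdot)]$ is then bounded below by $-2\bar L\max_{[0,1]}|F|$ (and it is finite and non-increasing because~\eqref{1} is the $L^2$-gradient flow of $E$ and the datum $1_{[-L,L]}$ has finite energy). The dissipation identity gives $\int^{\infty}\|u^L_t(t,\cdot)\|_{L^2(\R)}^2\,{\dif} t<\infty$, hence $t_n\to\infty$ with $\|u^L_t(t_n,\cdot)\|_{L^2}\to0$; by Lemma~\ref{lem5} and Arzel\`a--Ascoli, along a subsequence $u^L(t_n,\cdot)\to\phi$ locally uniformly, with $\phi\in C(\R,[0,1])$ even, non-increasing in $|x|$, $\phi(0)=u_*^L>\beta$, and $\phi\le\tfrac12(\alpha+\beta)$ off $[-\bar L,\bar L]$. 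Passing to the limit in the equation, $\phi$ is a stationary solution $J*\phi-\phi-f(\phi)=0$, so its end state $\ell:=\phi(\pm\infty)$ is a zero of $f$ in $[0,\tfrac12(\alpha+\beta)]$, i.e.\ $\ell\in\{0,\alpha\}$.

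Thus we must rule out any nontrivial, even, $|x|$-decreasing stationary solution $\phi$ with $\phi(0)>\beta$. The case $\ell=\alpha$ is easy: integrating $J*\phi-\phi=f(\phi)$ over $[-N,N]$ and letting $N\to\infty$ (using $\int_\R J=1$, $\int_\R J(y)|y|\,{\dif} y<\infty$ and $\phi\to\alpha$) makes the regularized integral of $f(\phi)$ vanish, which is impossible since $\alpha\le\phi\le1$ makes $f(\phi)\le0$ with strict sign near $x=0$. The case $\ell=0$ is the heart of the matter, and I expect it to be the main obstacle: one must show that no nontrivial nonnegative stationary solution decaying to $0$ can have peak value exceeding $\beta$ --- the nonlocal counterpart of the classical first-integral identity $\tfrac12(\phi')^2=F(\phi)$, which forces the peak of a bistable ground state (with $\int_0^1 f<0$) to be exactly $\beta$. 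The natural attempt is a nonlocal first integral: pairing $J*\phi-\phi-f(\phi)=0$ with $\phi'$ and integrating over $(-\infty,0)$ gives $\int_{-\infty}^0(J*\phi-\phi)(x)\,\phi'(x)\,{\dif} x=F(\phi(0))$, and one would try to bound the convolution term on the left by $0$ from above using the evenness of $J$ and the radial monotonicity of $\phi$; pinning down the sign of that term is the delicate point, and as a fallback one can invoke the known uniqueness and classification of stationary solutions of nonlocal bistable equations. Once the $\ell=0$ case is excluded, this second case is contradicted and the proof is complete.
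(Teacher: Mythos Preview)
Your ``only if'' direction and Case~1 of the ``if'' direction are fine; applying Lemma~\ref{lem4} at a time $t_0$ when the profile has become wide enough is a natural move. The difficulty lies entirely in Case~2.

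The nonlocal first-integral idea does not go through. The identity $\int_{-\infty}^{0}(J*\phi-\phi)\,\phi'\,{\dif}x=F(\phi(0))$ follows just by substituting $J*\phi-\phi=f(\phi)$ and integrating, so it gives no constraint on $\phi(0)$ unless you can evaluate the left side \emph{independently} from the convolution structure. In the local case this works because $\int_{-\infty}^{0}\phi''\phi'=\tfrac12\phi'(0)^2=0$; for $J*\phi-\phi$ there is no such pointwise identity, and no sign is available either --- note in addition that even proving the left side is $\le0$ would only yield $F(\phi(0))\le0$, i.e.\ $\phi(0)\in\{0\}\cup[\beta,1]$, which is the wrong direction (you would need $\ge0$ to force $\phi(0)\le\beta$). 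Your fallback of invoking a ``known classification'' of nonlocal bump steady states is not a proof: no such off-the-shelf result applies here, and in fact without the monotonicity hypothesis $1+f'>0$ in \textbf{(H2)} the equation can admit continua of (discontinuous) steady states (cf.\ Remark~\ref{small}), so any classification would itself require substantial work. Case~2 is therefore a genuine gap.

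The paper bypasses the steady-state question entirely by a comparison trick. Assuming $\beta<u_*^L<1$, fix $\gamma\in(u_*^L,1)$; since $\gamma>\beta$ one has $\int_0^{\gamma}f<0$, so one can construct a Lipschitz $g\ge f$ that is bistable between $0$ and $\gamma$ with $\int_0^{\gamma}g<0$. The solution $v$ of $v_t=J*v-v-g(v)$ with the same datum $1_{[-L,L]}$ satisfies, by Lemma~\ref{lem4} applied to the $g$-problem (whose fronts have negative speed by~\eqref{64}), $v(t,\cdot)\to\gamma$ in $L^\infty_{\mathrm{loc}}$. But $g\ge f$ makes $v$ a subsolution of the $f$-equation, so $u^L\ge v$ and hence $u_*^L\ge\gamma>u_*^L$, a contradiction. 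This forces $u_*^L=1$, and the same comparison (with $\gamma$ close to $1$) gives $u^L\to1$ in $L^\infty_{\mathrm{loc}}$. Modifying the nonlinearity lets Lemma~\ref{lem4} do all the work, with no appeal to the structure of stationary solutions of the original equation.
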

\begin{proof}
It suffices to show that if $\lim\limits_{t\rightarrow\infty}u^L(t,0)>\beta$, then $L\in M_1$. We first claim that $\lim\limits_{t\rightarrow\infty}u^L(t,0)=1$. Otherwise,  for any $\gamma\in(\lim\limits_{t\rightarrow\infty}u^L(t,0),1)$, we can define a Lipschitz function $g$ satisfying
\[
  \left\{
     \begin{array}{l}
      g(s)\geq f(s), ~ \hbox{for  $s\geq 0$;} \cr
       g(0)=g(\gamma^0)=g(\gamma)=0 ~\quad\hbox{where $\gamma^0\in(0,\gamma)$;} \cr
       g'(0)>0,~g'(\gamma)>0 ~ \hbox{and $\int_0^{\gamma}g(s)\,{\dif} s<0$ .}
     \end{array}
  \right.
\]
Let $v(t,x)$ be the solution of
\begin{equation}\label{20}
  v_t=J*v-v-g(v), t>0, x\in\mathbb{R}, v(0,x)=1_{[-L,L]}(x),
\end{equation}
and $\psi$ be the unique travelling front (with speed $\mu$) of \eqref{20} satisfying $\psi'>0$, $\psi(-\infty)=0$ and $\psi(\infty)=\gamma$. (note that by \eqref{64} $\mu<0$)
Applying Lemma \ref{lem4} to \eqref{20}, then there exist $x_1,~x_2$, $\varepsilon>0$, and $\kappa>0$ so that
\begin{equation*}
\begin{split}
  &\psi(x-\mu t-x_1)+\psi(-x-\mu t-x_1)-1-\varepsilon_0 e^{-{\kappa} t}\leq v(t,x)\cr
  &\quad\leq \psi(x-\mu t-x_2)+\psi(-x-\mu t-x_2)-1+\varepsilon_0 e^{-{\kappa} t}
\end{split}
\end{equation*}
If $x$ is positive and bounded, then
\begin{equation*}
\begin{split}
  &v(t,x)\leq \psi(x-\mu t-x_2)+\psi(-\mu t-x_2)-1+\varepsilon_0 e^{-{\kappa} t}.
\end{split}
\end{equation*}
By Lemma \ref{lem9}, there exists some $k>0$, $K>0$ and $\lambda_1<0$ such that
\begin{equation*}
  k e^{\lambda_1\xi}\leq 1-\psi(\xi)\leq K e^{\lambda_1\xi},~\text{for}~ \xi\geq0.
\end{equation*}
Therefore, for $-\mu t-x_2\geq0$, we have
\begin{equation*}
\begin{split}
  &v(t,x)\leq \psi(x-\mu t-x_2)-k e^{\lambda_1(-\mu t-x_2)}+\varepsilon_0 e^{-{\kappa} t}.
\end{split}
\end{equation*}
Similarly, for $-\mu t-x_1\geq0$, we have
\begin{equation*}
\begin{split}
  &v(t,x)\geq \psi(-\mu t-x_1)+\psi(-x-\mu t-x_1)-1-\varepsilon_0 e^{-{\kappa} t}\\
  &\geq \psi(-x-\mu t-x_1)-K e^{\lambda_1(-\mu t-x_1)}+\varepsilon_0 e^{-{\kappa} t}.
\end{split}
\end{equation*}
Letting $t\rightarrow\infty$, we have
\begin{equation}
   \lim\limits_{t\rightarrow\infty}v(t,x)=\psi(\infty)=\gamma~\text{in}~L^{\infty}_{loc}(\mathbb{R}).
\end{equation}
Since $g\geq f$, by the comparison principle, we have
\begin{equation}\label{34}
  \gamma=\lim\limits_{t\rightarrow\infty}v(t,x)\leq\lim\limits_{t\rightarrow\infty}u^L(t,x)\leq\lim\limits_{t\rightarrow\infty}u^L(t,0)<\gamma,
\end{equation}
 which in turn forces a contradiction. Thus $\lim\limits_{t\rightarrow\infty}u^L(t,0)=1$ if $\lim\limits_{t\rightarrow\infty}u^L(t,0)>\beta$ and we have by \eqref{34} that
\begin{equation*}
\lim\limits_{t\rightarrow\infty}u^L(t,x)= 1 ~\text{in}~L^{\infty}_{loc}(\mathbb{R}),
\end{equation*}
and so $L\in M_1$.
\end{proof}
\begin{prop}\label{prop5}
$M_1$ is either empty or open.
\end{prop}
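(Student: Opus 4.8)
The plan is to mirror the proof of Proposition \ref{prop7}, using that $M_1$ is an ``upper set''. Indeed, if $L\in M_1$ and $\tilde L>L$, then $1_{[-\tilde L,\tilde L]}\geq 1_{[-L,L]}$, so the comparison principle (Lemma \ref{com}) gives $u^{\tilde L}\geq u^{L}$; since $u^{L}\to1$ locally uniformly and $u^{\tilde L}\leq1$, also $u^{\tilde L}\to1$ locally uniformly, i.e.\ $\tilde L\in M_1$. Hence, if $M_1\neq\emptyset$, then $M_1=(L^1,\infty)$ or $M_1=[L^1,\infty)$ with $L^1:=\inf M_1$. Moreover $M_0$ and $M_1$ are disjoint (their defining limits at $x=0$ are $0$ and $1$) and $(0,L^0)\subset M_0\neq\emptyset$ by Propositions \ref{prop3}–\ref{prop7}, so $L^1\geq L^0>0$. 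It therefore remains to show $L^1\notin M_1$.

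Suppose, for contradiction, that $L^1\in M_1$. Fix $\bar\alpha\in(0,1-\alpha)$, set $\bar\alpha':=\bar\alpha/2$, and let $\bar L>0$ be large enough that Lemma \ref{lem4} applies with parameter $\bar\alpha'$ (depending only on $\bar\alpha'$ and $f$). Since $L^1\in M_1$, the convergence $u^{L^1}(t,\cdot)\to1$ is uniform on $[-\bar L,\bar L]$ and $\alpha+\bar\alpha<1$, so there is a finite $t_0$ with $u^{L^1}(t_0,x)>\alpha+\bar\alpha$ for $|x|\leq\bar L$. Now I would invoke continuous dependence of the solution on the initial datum together with the uniform modulus of continuity in $x$ from Lemma \ref{lem5}: since $\|1_{[-\tilde L,\tilde L]}-1_{[-L^1,L^1]}\|_{L^1}=2(L^1-\tilde L)\to0$ as $\tilde L\uparrow L^1$, the Gronwall argument in the proof of Lemma \ref{exist} (run in $L^1$, where convolution and $f$ are still bounded operations) gives $\|u^{\tilde L}(t_0,\cdot)-u^{L^1}(t_0,\cdot)\|_{L^1(\mathbb R)}\to0$; as $\{u^{L}(t_0,\cdot)\}_L$ is uniformly equicontinuous in $x$ (Lemma \ref{lem5}, with modulus depending only on the accuracy), this upgrades to uniform closeness on $[-\bar L,\bar L]$. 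Hence there is a small $\delta\in(0,L^1)$ such that, with $\tilde L:=L^1-\delta$, one has $u^{\tilde L}(t_0,x)>\alpha+\bar\alpha'$ for $|x|<\bar L$.

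Next, $w(s,x):=u^{\tilde L}(t_0+s,x)$ solves IVP \eqref{53} with initial datum $w_0:=u^{\tilde L}(t_0,\cdot)$, which satisfies $0\leq w_0\leq1$, $w_0(x)>\alpha+\bar\alpha'$ for $|x|<\bar L$, and $\limsup_{|x|\to\infty}w_0(x)=0<\alpha$ — the last because, at the fixed finite time $t_0$, $u^{\tilde L}(t_0,x)\to0$ as $|x|\to\infty$, exactly as established in the proof of Proposition \ref{prop3} (bound $u^{\tilde L}$ above by $e^{Mt_0}w$ with $w_t=J*w-w$, $w(0,\cdot)=1_{[-\tilde L,\tilde L]}$, and use the Fourier representation). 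Applying Lemma \ref{lem4} to $w$ yields constants $x_1,\kappa>0,\varepsilon_0>0$ with
\begin{equation*}
  u^{\tilde L}(t_0+s,0)\;\geq\; 2U(-cs-x_1)-1-\varepsilon_0 e^{-\kappa s}\qquad(s>0).
\end{equation*}
Since $c<0$ and $U(+\infty)=1$, letting $s\to\infty$ gives $\lim_{t\to\infty}u^{\tilde L}(t,0)=1>\beta$, so $\tilde L\in M_1$ by Proposition \ref{prop4}. This contradicts $\tilde L<L^1=\inf M_1$. Therefore $L^1\notin M_1$, and $M_1=(L^1,\infty)$ is open.

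The step I expect to be the main obstacle is the transfer from $L^1$-closeness of the initial data to a pointwise (uniform-on-compacts) lower bound for $u^{\tilde L}(t_0,\cdot)$: the naive $L^\infty$ continuous dependence is useless here, since $\|1_{[-\tilde L,\tilde L]}-1_{[-L^1,L^1]}\|_{L^\infty}=1$ regardless of $\delta$, so one genuinely needs to pair $L^1$ (or $L^1_{\mathrm{loc}}$) continuous dependence with the uniform spatial equicontinuity of Lemma \ref{lem5}. Everything else is a routine combination of the comparison principle, the well-definedness and monotonicity of $t\mapsto u^L(t,0)$ (Lemma \ref{prop2}, entering through Propositions \ref{prop3}–\ref{prop4}), and the two-front lower estimate of Lemma \ref{lem4}.
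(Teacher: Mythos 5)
Your proof is correct, and its skeleton is the one the paper intends: set $L^1=\inf M_1$, obtain $(L^1,\infty)\subset M_1$ from the comparison principle, and rule out $L^1\in M_1$ by perturbing the initial datum downward. The paper itself disposes of the last step in one line (``a similar argument as in the proof of Proposition~\ref{prop7}''), so the substance of your write-up lies in how that perturbation is executed, and there you genuinely improve on the template. The continuous dependence actually proved in Lemma~\ref{exist} is an $L^\infty$ estimate, and since $\|1_{[-\tilde L,\tilde L]}-1_{[-L^1,L^1]}\|_{L^\infty}=1$ for every $\delta>0$ it yields nothing here (nor, strictly speaking, in the paper's own proof of Proposition~\ref{prop7}); you correctly diagnose this and replace it by $L^1$-continuous dependence (Young's inequality for $J*$ and the Lipschitz bound on $f$ let the Gronwall argument run in $L^1$) combined with the $L$- and $t$-uniform modulus of continuity of Lemma~\ref{lem5} to upgrade to uniform closeness on $[-\bar L,\bar L]$. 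You also make explicit the propagation mechanism the paper leaves implicit: re-initializing at time $t_0$, checking the decay of $u^{\tilde L}(t_0,\cdot)$ at infinity as in Proposition~\ref{prop3}, and invoking the two-front lower bound of Lemma~\ref{lem4} together with $c<0$ to force $\lim_{t\to\infty}u^{\tilde L}(t,0)=1>\beta$, whence $\tilde L\in M_1$ by Proposition~\ref{prop4} and the desired contradiction. The one caveat is inherited rather than introduced: your upgrade from $L^1$ to uniform closeness leans on the uniform Lipschitz-type modulus asserted in Lemma~\ref{lem5}, which is delicate for discontinuous initial data (the Duhamel formula shows the jump of $u_0$ at $x=\pm L$ does not instantly disappear), so a fully airtight version would restrict the equicontinuity to $t\geq t_0>0$ away from a set where the residual jumps live, or smooth the initial data as in Lemma~\ref{prop1}. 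That is a defect of the cited lemma, not of your argument, which is otherwise a complete and more careful rendering of the proof the paper only sketches.
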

\begin{proof}
Suppose that $M_1\neq\emptyset$ ($t^*$ defined in Lemma \ref{prop2} is finite), and set $L^1=\inf M_1$. Then the comparison theorem gives $(L_1,\infty)\subset M_1$. It remains to prove $L^1\notin M_1$, but this follows easily by a similar argument as in the proof of Proposition \ref{prop7}.
\end{proof}

\begin{prop}\label{prop6}
If $\lim\limits_{t\rightarrow\infty}u^L(t,0)=\beta$, then $L\in T$. Furthermore, $u^L(t_n,x)$ converges to $u_{\infty}(x)$ as $t_n\rightarrow\infty$ for some $t_n$ in $L^{\infty}_{loc}(\mathbb{R})$, where $u_{\infty}(x)$ satisfies
\begin{equation}
\begin{split}
   & J*u_{\infty}-u_{\infty}-f(u_{\infty})=0, \\
   & u_{\infty}(0)=\beta.
\end{split}
\end{equation}
\end{prop}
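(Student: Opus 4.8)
\emph{Plan of the argument.} The first assertion needs nothing new: by Proposition~\ref{prop3}, $L\in M_0$ precisely when $\lim_{t\to\infty}u^L(t,0)<\beta$, and by Proposition~\ref{prop4}, $L\in M_1$ precisely when $\lim_{t\to\infty}u^L(t,0)>\beta$ (the limit exists by Lemmas~\ref{prop2} and \ref{com}); so under the hypothesis $\lim_{t\to\infty}u^L(t,0)=\beta$ the value $L$ lies in neither set, i.e. $L\in T$. For the second assertion I would realise $u_\infty$ as a subsequential limit of $u^L(t_n,\cdot)$ and identify it as a steady state by exploiting that \eqref{1} is the $L^2$ gradient flow of the energy $E$.

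Concretely, I would proceed as follows. Since $0\le u^L\le1$ (Lemmas~\ref{max}--\ref{com}) and $\{u^L(t,\cdot):t>0\}$ is equicontinuous on $\mathbb{R}$ uniformly in $t$ (Lemma~\ref{lem5}), Arzelà--Ascoli lets me extract from any $t_n\to\infty$ a subsequence along which $u^L(t_n,\cdot)\to u_\infty$ in $L^\infty_{loc}(\mathbb{R})$, with $u_\infty$ continuous, $0\le u_\infty\le1$, and $u_\infty(0)=\lim_{t\to\infty}u^L(t,0)=\beta$; the point is to choose the times well. First I would check $u^L(t,\cdot)\in L^2(\mathbb{R})$ for every $t>0$: from $u^L\le e^{Mt}w$ with $w$ the linear solution, whose Fourier transform is $\widehat w(t,\xi)=e^{(\widehat J(\xi)-1)t}\widehat{1_{[-L,L]}}(\xi)$ and satisfies $|\widehat w(t,\cdot)|\le|\widehat{1_{[-L,L]}}|\in L^2$ (because $|\widehat J|\le1$), Plancherel gives $w(t,\cdot)\in L^2$, hence $u^L(t,\cdot)\in L^2$, and then also $u^L_t=J*u^L-u^L-f(u^L)\in L^2$ and $E[u^L(t,\cdot)]<\infty$. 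The gradient-flow identity $\tfrac{d}{dt}E[u^L(t,\cdot)]=-\|u^L_t(t,\cdot)\|_{L^2(\mathbb{R})}^2\le0$ shows $E$ is non-increasing along the orbit, and I would argue it is bounded from below: the nonlocal Dirichlet part $\tfrac14\iint J(x-y)(u^L(x)-u^L(y))^2\,dx\,dy$ is nonnegative, and $\int_{\mathbb{R}}F(u^L(t,\cdot))\,dx$ stays bounded below because $F\ge0$ on $[0,\beta]$ while the overshoot set $\{x:u^L(t,x)>\beta\}$, an interval about the origin by Lemma~\ref{prop1}, has length bounded uniformly in $t$ (if not, then at some $t_0$ one would have $u^L(t_0,\cdot)>\alpha+\bar\alpha$ on an arbitrarily long interval for a fixed $\bar\alpha\in(0,\beta-\alpha)$ with $u^L(t_0,\cdot)\to0$ at infinity, and restarting at $t_0$, Lemma~\ref{lem4} would give a two-front lower bound forcing $u^L(t,0)\to1$, i.e. $L\in M_1$, against the hypothesis; when the switching time $t^*$ of Lemma~\ref{prop2} is finite this is immediate, since $u^L(t,\cdot)\le u^L(t,0)\le\beta$ for $t\ge t^*$). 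Being non-increasing and bounded below, $E[u^L(t,\cdot)]$ tends to a finite limit, so $\int_1^\infty\|u^L_t(s,\cdot)\|_{L^2(\mathbb{R})}^2\,ds<\infty$, and I may pick $\tau_n\to\infty$ with $\|u^L_t(\tau_n,\cdot)\|_{L^2(\mathbb{R})}\to0$.

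Finally I would pass to the limit along these $\tau_n$. Refining the subsequence gives $u^L(\tau_n,\cdot)\to u_\infty$ in $L^\infty_{loc}$ and, shrinking once more, $u^L_t(\tau_n,\cdot)\to0$ a.e.\ on $\mathbb{R}$. For each fixed $x$, $f(u^L(\tau_n,x))\to f(u_\infty(x))$ by continuity and $J*u^L(\tau_n,x)\to J*u_\infty(x)$ (split the convolution at $|y|=R$ and use $\int_{|y|>R}J\to0$ uniformly in $n$), so $u^L_t(\tau_n,x)\to J*u_\infty(x)-u_\infty(x)-f(u_\infty(x))$; comparing with the a.e.\ convergence to $0$ yields $J*u_\infty-u_\infty-f(u_\infty)=0$ a.e., hence identically since the left-hand side is continuous in $x$. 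With $u_\infty(0)=\beta$ this is the asserted stationary problem, with $t_n:=\tau_n$.

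I expect the energy step to be the main obstacle: making the Lyapunov functional $E$ genuinely usable along the threshold orbit — its finiteness (which rests on the Fourier/Plancherel estimate, essentially already present in the proof of Proposition~\ref{prop3}), the dissipation identity, and above all its uniform lower bound, which forces one to rule out an unbounded overshoot region by playing Lemma~\ref{lem4} against the hypothesis $\lim_{t\to\infty}u^L(t,0)=\beta$. Everything afterwards is routine compactness and limit-passing.
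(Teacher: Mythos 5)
Your argument is correct, but it takes a genuinely different route from the paper's. Both proofs start the same way (the first assertion is immediate from Propositions \ref{prop3} and \ref{prop4}; compactness of $\{u^L(t,\cdot)\}$ comes from Lemmas \ref{max}--\ref{lem5} and Arzel\`a--Ascoli), and both ultimately force $J*u-u-f(u)$ to vanish along a time sequence by a Lyapunov argument. The difference is which Lyapunov functional. The paper does not use the energy $E$ from the Introduction directly: it builds a left-truncated function $w$ (equal to $u$ on $[-t,0]$, glued to $0$ far to the left and to $1$ for $x\geq 1$) and a renormalized functional $V(t)$ with the correction term $H(x)F(1)$, and then pays for the truncation with the lengthy estimates $I_1,I_2,I_3$ showing $V$ is bounded and with an error term $P(t)=V'(t)-Q(t)\to 0$ in the dissipation identity. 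You instead exploit that in the threshold regime the solution itself is localized: $u^L(t,\cdot)\in L^2(\mathbb{R})$ by the Fourier--Plancherel bound (via the same comparison $u^L\leq e^{Mt}w$ already used in Proposition \ref{prop3}), so $E[u^L(t,\cdot)]$ is finite and \emph{exactly} non-increasing, with no truncation error; the only genuinely new input you need is the uniform bound on the length of the overshoot set $\{x\colon u^L(t,x)>\beta\}$ (an interval by Lemma \ref{prop1}), which you obtain correctly by playing Lemma \ref{lem4} and $c<0$ against the hypothesis $\lim_{t\to\infty}u^L(t,0)=\beta$ --- the same mechanism as in Proposition \ref{prop4}. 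Your route buys a cleaner dissipation step at the cost of this localization input; the paper's truncation is more robust (it would survive for front-like data that are not spatially integrable) and follows the classical Fife--McLeod template. The one step you must write out in full is the identity $\frac{\dif}{\dif t}E[u^L(t,\cdot)]=-\|u^L_t(t,\cdot)\|_{L^2(\mathbb{R})}^2$: Lemma \ref{exist} gives well-posedness only in $L^\infty$, so you first need $t\mapsto u^L(t,\cdot)$ to be $C^1$ as an $L^2(\mathbb{R})$-valued map (rerun the contraction argument of Lemma \ref{exist} in $L^2\cap L^\infty$, using that $v\mapsto J*v-v-f(v)$ is Lipschitz there on order-bounded sets) before differentiating under the integral. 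With that in place, the remaining limit passage along the times $\tau_n$ is routine and correct.
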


\begin{proof}
The basic idea in the proof comes from \cite{bates1997traveling} and \cite{fife1977approach}. In what follows, for notational convenience, we will suppress the superscript $L$.
If $\lim\limits_{t\rightarrow\infty}u(t,0)=\beta$, then $0\leq \lim\limits_{t\rightarrow\infty}u(t,x)\leq\beta$ for all $x\in\mathbb{R}$. Consequently, by \eqref{64}, in this case
 \begin{equation*}
  c=\frac{\int_{\mathbb{R}} f(u)\,{\dif}u}{\int_{\mathbb{R}}(U')^2\,{\dif}\xi}=\frac{\int_0^{\beta} f(u)\,{\dif}u}{\int_{\mathbb{R}}(U')^2\,{\dif}\xi}=0.
 \end{equation*}

We know that $u$ is bounded and equicontinuous for $t\geq N$ by Lemmas \ref{max}, \ref{com} and \ref{lem5}. Let $\{t_n'\}$ be a given sequence. If there is a finite accumulation point $t_{\infty}$, then the continuity of $u$ implies that $u(t,\cdot)$ approaches $u(t_{\infty},\cdot)$ along a subsequence. So assume there is none. For any $K>0$, let $u^K(t,x)$ be the restriction of $u$ to the set $|x|\leq K$, $t\geq N$. Applying Arzel\`a-Ascoli theorem, for each $K=1,2,\cdots$, there exists a subsequence $\{t_{n,K}\}$ such that $\{u^K(t_{n,K},x)\}$ converges as $t_{n,K}\rightarrow\infty$ in $L^{\infty}[-K,K]$. We may always choose $\{t_{n,K+1}\}$ to be a subsequence of $\{t_{n,K}\}$. We then take a diagonal subsequence of $\{t_{n,K}\}$, denoted by $\{t_n\}$, such that $\{u(t_n,x)\}$ converges uniformly on each interval $[-K,K]$ to a limit $u_{\infty}(x)$,  as $t_n\rightarrow\infty$ in the $L^{\infty}$ norm.



%
%
%
%
%
%
%
%
%
%
%
%
%
Define the left truncation
\begin{equation}\label{24}
  w(t,x):=\left\{
           \begin{array}{ll}
             \eta(-x-t)u(t,x), & \hbox{$x<0$;} \cr
             1-\eta(x)(1-u(t,x)), & \hbox{$x\geq0$;}\cr
           \end{array}
         \right.
\end{equation}
where $\eta\in C^{\infty}(\mathbb{R})$ satisfies $\eta(x)=1$ for $x\leq 0$ and $\eta(x)=0$ for $x\geq1$.
Then $0\leq w(t,x)\leq1$, $w(t,x)\equiv0$ for $x\leq-t-1$, and $w(t,x)\equiv1$ for $x\geq1$. Besides, it is easy to see that $w(t,x)\geq u(t,x)$ for  $x\geq0$, and $w(t,x)\leq u(t,x)$ for  $x<0$.

We define a Lyapunov functional by
\[
  V(t):=\int_{\mathbb{R}}\bigg[\frac{1}{2}(J* w-w)w-F(w)+H(x)F(1)\bigg]\,{\dif} x,
\]
where $F(u):=\int_0^uf(s)\,{\dif} s$, $H(x)$ is the Heaviside step function. Now we are going to prove that $V$ is well-defined and bounded for all $t\geq0$. To start with, the last two terms can be splitted into three parts (note that $w(t,x)\equiv0$ for $x\leq-t-1$, and $w(t,x)\equiv1$ for $x\geq1$):
\[
  \int_{\mathbb{R}}[H(x)F(1)-F(w)]\,{\dif} x=-\int_{-t-1}^{-t}F(w)\,{\dif} x-\int_{-t}^{0}F(w)\,{\dif} x+\int_0^1(F(1)-F(w))\,{\dif} x.
\]

From now on, let $C$ be a generic positive constant which may vary from line to line. By \textbf{(H2)} and \textbf{(H3)}, there exists some constant $C$ such that $ |F(u)|=|\int_0^u[f(s)-f(0)]\,{\dif}s|\leq\max\limits_{s_\theta\in(0,s)}|f'(s_\theta)|\frac{1}{2}u^2\leq Cu^2. $ Therefore, it is easy to check that the first and the last term above are bounded. For the second term, noting that
\begin{equation*}
\begin{split}
&  \left|\int_{-t}^{0}F(w)\,{\dif} x\right|\leq C\int_{-t}^{0}w^2\,{\dif} x\leq C\int_{-t}^{0}w\,{\dif} x=C\int_{-t}^{0}u\,{\dif} x,
\end{split}
\end{equation*}
then we have by Lemma \ref{lem4} with $c=0$ that
\begin{equation}
\begin{aligned}
&u\leq U(x-x_2)+U(-x-x_2)-1+\varepsilon_0 e^{-{\kappa} t}\leq U(-x-x_2)+\varepsilon_0 e^{-{\kappa} t}.
\end{aligned}
\end{equation}
Hence
\begin{equation}
\begin{split}
   &   \left|\int_{-t}^{0}F(w)\,{\dif} x\right|\leq C\int_{-t}^{0}U(x-x_2)\,{\dif} x+Ct e^{-{\kappa} t}.
\end{split}
\end{equation}
On the other hand, since $f'(0)>0$, we can always find some positive constants $C$ and $R$ such that for $x\leq -R$,
\begin{equation*}
J*U(x)-U(x)=f(U)=f(U)-f(0)\geq CU(x).
\end{equation*}
Integrating the above inequality on $(-\infty,-R]$, we have
\[
  \begin{aligned}
    &\int_{-\infty}^{-R}U(x)\,{\dif} x\leq C\int_{-\infty}^{-R}[J*U(x)-U(x)]\,{\dif} x\\
    &=C\int_{-\infty}^{-R}\int_{-\infty}^{\infty}J(y)\big(U(x-y)-U(x)\big)\,{\dif} y{\dif} x \\
      &  =-C\int_{-\infty}^{-R}\int_{-\infty}^{\infty}J(y)\int_0^1U'(x-sy)y\,{\dif} s{\dif} y{\dif} x
          \\
      &  =-C\int_{-\infty}^{\infty}yJ(y)\int_0^1[U(-R-sy)-U(-\infty)]\,{\dif} s{\dif} y  \\
      &\leq C\int_{-\infty}^{\infty}|y|J(y)\,{\dif} y<\infty.
  \end{aligned}
\]
Thus
\begin{equation}\label{69}
  \int_{-\infty}^0U(x)\,{\dif} x<\infty.
\end{equation}
In addition, by L'hopstipal's Rule,
\begin{equation}\label{70}
 \lim\limits_{t\rightarrow\infty} t e^{-\kappa t}=\lim\limits_{t\rightarrow\infty}\dfrac{1}{ \kappa}e^{-\kappa t}=0.
\end{equation}
Therefore $|\int_{-t}^{0}F(w)\,{\dif} x|$ and hence $|\int_{\mathbb{R}}[H(x)F(1)-F(w)]\,{\dif} x|$ is bounded for all $t\geq0$.

For the first term, note that
\begin{equation*}
  \left|\int_{\mathbb{R}}(J* w-w)w\,{\dif} x\right|\leq  \int_{\mathbb{R}}|(J* w-w)w|\,{\dif} x\leq  \int_{\mathbb{R}}|J* w-w|\,{\dif} x.
\end{equation*}
Thus it is sufficient to show that $\int_{\mathbb{R}}|J* w-w|\,{\dif} x<\infty$. We split this integration with respect to the three intervals: $(-\infty,-t]$, $(-t,0]$, $(0,\infty)$ and denote the resulting integrals by $I_1$, $I_2$ and $I_3$, respectively. Recalling $w\leq u$ if $x\leq 0$, we find that
\[
\begin{aligned}
&I_1=\int_{-\infty}^{-t}|J* w-w|\,{\dif} x=\int_{-\infty}^{-t}\bigg|\int_{-\infty}^{\infty}J(x-y)(w(t,y)-w(t,x)){\dif}y\bigg|\,{\dif} x\cr
&\leq \int_{-\infty}^{-t}{\dif} x\int_{-\infty}^0J(x-y)|w(t,y)-w(t,x)|\,{\dif} y\cr
&+\int_{-\infty}^{-t}{\dif} x\int_0^{\infty}J(x-y)|w(t,y)-w(t,x)|\,{\dif} y\cr
&\leq \int_{-\infty}^{-t}{\dif} x\int_{-\infty}^0J(x-y)w(t,y)\,{\dif} y+\int_{-\infty}^{-t}{\dif} x\int_{-\infty}^0J(x-y)w(t,x)\,{\dif} y\\
&+2\int_{-\infty}^{-t}{\dif} x\int_0^{\infty}J(x-y)\,{\dif} y\cr
&\leq \int_{-\infty}^{-t}{\dif} x\int_{-t-1}^0J(x-y)w(t,y)\,{\dif} y
+\int_{-t-1}^{-t}{\dif} x\int_{-\infty}^0J(x-y)w(t,x)\,{\dif} y\cr
&+2\int_{-\infty}^{-t}{\dif} x\int_0^{\infty}J(x-y)\,{\dif} y\cr
&\leq\int_{-\infty}^{-t}{\dif} x\int_{-t-1}^0J(x-y)u(t,y)\,{\dif} y
+\int_{-t-1}^{-t}{\dif} x\int_{-\infty}^0J(x-y)u(t,x)\,{\dif} y\cr
&+2\int_{-\infty}^{-t}{\dif} x\int_0^{\infty}J(x-y)\,{\dif} y\cr
&:=I_{11}+I_{12}+I_{13}.
\end{aligned}
\]
We are now in position to apply Lemma \ref{lem4} to $u$. Therefore
\begin{equation*}
  \begin{aligned}
&I_{11}+I_{12}\leq\int_{-\infty}^{-t}{\dif} x\int_{-t-1}^0J(x-y)[U(y-x_2)+U(-y-x_2)-1+\varepsilon_0e^{-\kappa t}]\,{\dif} y\cr
&
+\int_{-t-1}^{-t}[U(x-x_2)+U(-x-x_2)-1+\varepsilon_0e^{-\kappa t}]\,{\dif} x\cr
&\leq \int_{-\infty}^{-t}{\dif} x\int_{-t-1}^0J(x-y)U(y-x_2)\,{\dif} y+(t+1)\varepsilon_0e^{-\kappa t}\cr
&
+U(-t-x_2)+\varepsilon_0e^{-\kappa t}.
   \end{aligned}
\end{equation*}
Rewriting the first term above, we get
\begin{equation*}
  \begin{split}
      & \int_{-\infty}^{-t}{\dif} x\int_{-t-1}^0J(x-y)U(y-x_2)\,{\dif} y \\
      & =\int_{-\infty}^{-t}{\dif} x\int_{-t-1}^{-\frac{t}{2}}J(x-y)U(y-x_2)\,{\dif} y+\int_{-\infty}^{-t}{\dif} x\int_{-\frac{t}{2}}^0J(x-y)U(y-x_2)\,{\dif} y\\
    &\leq \int_{-1}^{\infty}(1+y)J(y)\,{\dif}yU(-\frac{t}{2}-x_2)+\frac{t}{2}\int^{\infty}_{\frac{t}{2}}J(y)\,{\dif}y\\
&\leq(1+\int_{\mathbb{R}}|y|J(y)\,{\dif}y)U(-\frac{t}{2}-x_2)+\int^{\infty}_{\frac{t}{2}}yJ(y){\dif}y.
   \end{split}
\end{equation*}
Moreover, we have
\begin{equation*}
  \begin{split}
      & I_{13}=2\int_{-\infty}^{-t}{\dif} x\int_0^{\infty}J(x-y)\,{\dif} y \leq 2\int_{t}^{\infty}|y|J(y)\,{\dif}y.
   \end{split}
\end{equation*}
Combining those above estimates, we conclude that $I_1$ is bounded for all $t\geq0$. In fact, it is easy to see that $I_1\rightarrow 0$ as $t\rightarrow\infty$. Similarly,
\begin{equation*}
  \begin{split}
      &  I_2=\int_{-t}^0|J* w-w|\,{\dif} x\\
      & \leq \int_{-t}^0\,{\dif} x\int_{-\infty}^{0}J(x-y)|w(t,y)-w(t,x)|{\dif}y+\int_{-t}^0\,{\dif} x\int^{\infty}_{0}J(x-y)|w(t,y)-w(t,x)|{\dif}y\\
&\leq \int_{-t}^0\,{\dif} x\int_{-\infty}^{0}J(x-y)[w(t,y)+w(t,x)]{\dif}y
+2\int_{-t}^0\,{\dif} x\int^{\infty}_{0}J(x-y){\dif}y:=I_{21}+I_{22}.
  \end{split}
\end{equation*}
Noting that $w= u$ when $x\in (-t,0)$ and then applying Lemma \ref{lem4}, we have
\begin{equation*}
  \begin{split}
      &  I_{21}= \int_{-t}^0\,{\dif} x\int_{-\infty}^{0}J(x-y)u(t,y){\dif}y+\int_{-t}^0\,{\dif} x\int_{-\infty}^{0}J(x-y)u(t,x){\dif}y\\
      &\leq \int_{-t}^0\,{\dif} x\int_{-\infty}^{0}J(x-y)(U(y-x_2)+U(-y-x_2)-1+\varepsilon_0e^{-\kappa t}){\dif}y\\
&+\int_{-t}^0\,{\dif} x\int_{-\infty}^{0}J(x-y)(U(x-x_2)+U(-x-x_2)-1+\varepsilon_0e^{-\kappa t}){\dif}y\\
&\leq \int_{-\infty}^{0}U(y-x_2)\,{\dif}y+2t\varepsilon_0e^{-\kappa t}+\int_{-t}^0U(x-x_2){\dif}x,
  \end{split}
\end{equation*}
which is bounded by \eqref{69} and \eqref{70}.
Since \textbf{(H1)} implies
\begin{equation*}
I_{22}\leq 2\int^{\infty}_0|y|J(y)\,{\dif} y<\infty,
\end{equation*}
we can conclude that $I_2<\infty$. Finally, we have
\begin{equation*}
\begin{split}
&I_{3}=\int_0^1\bigg|\int_{\mathbb{R}}J(x-y)(w(t,y)-w(t,x)){\dif}y\bigg|\,{\dif} x+\int_1^\infty\bigg|\int_{\mathbb{R}}J(x-y)(w(t,y)-w(t,x)){\dif}y\bigg|\,{\dif} x\\
&\leq 2\int_0^1\int_{\mathbb{R}}J(x-y)\,\dif y\,{\dif}x+2\int_1^\infty\int_{-\infty}^0 J(x-y){\dif}y\,{\dif} x+\int_1^\infty\int_{0}^1 J(x-y){\dif}y\,{\dif} x\\
&\leq C(1+\int_{\mathbb{R}}|y|J(y)\,{\dif}y)<\infty.
\end{split}
\end{equation*}
Combining the estimates for $I_1-I_3$, we have proved that the Lyapunov functional $V[w]$ is bounded for all $t\geq0$. By Lebesgue's Theorem and the fact that $\frac{\partial w}{\partial t}$, $J*w-w$ and $f(w)$ are bounded for $t\geq0$, we can change the order of differentiation and integration to obtain
\[
  V'(t)=\int_{\mathbb{R}}(J* w-w-f(w))\frac{\partial w}{\partial t}(t,x)\,{\dif} x.
\]
Define $Q(t)$ by $Q(t):=\int_{\mathbb{R}}{(J* w-w-f(w))}^2\,{\dif} x$. We proceed to show $P(t):=V'(t)-Q(t)\rightarrow0$ as $t\rightarrow\infty$. Since $P(t)\equiv0$ if $x\in (-\infty,-t-1]\cup(-t,0)\cup [1,\infty)$, we have
\[
  \begin{aligned}
     P(t) & = \int_{-t-1}^{-t}\bigg(J* w-w-f(w)\bigg)\bigg(\frac{\partial w}{\partial t}-J* w+w+f(w)\bigg)\,{\dif} x\cr
&+\int_{0}^{1}\bigg(J* w-w-f(w)\bigg)\bigg(\frac{\partial w}{\partial t}-J* w+w+f(w)\bigg)\,{\dif} x:=P1+P2.
   \end{aligned}
\]
It follows from \textbf{(H2)}, Lemma \ref{lem4} and the boundedness of $\frac{\partial w}{\partial t}$, $J*w-w$ and $f(w)$ that
\[
  \begin{aligned}
    P_1 & \leq C\int_{-t-1}^{-t}|J* w-w-f(w)|\,{\dif} x\\
&\leq C\int_{-t-1}^{-t}|J* w-w|\,{\dif} x+C\int_{-t-1}^{-t}|f(w)|\,{\dif} x\\
&\leq I_1+C\int_{-t-1}^{-t}Cw\,{\dif} x\leq I_1+C\int_{-t-1}^{-t}u\,{\dif} x\\
&\leq I_1+C\int_{-t-1}^{-t}[U(x-x_2)+U(-x-x_2)-1+\varepsilon_0e^{-\kappa t}]\,{\dif} x\\
&\leq I_1+CU(-t-x_2)+\varepsilon_0e^{-\kappa t}\rightarrow0,~\text{as}~t\rightarrow\infty.
  \end{aligned}
\]
The definition of $I_1$ was mentioned earlier. Similarly, we can prove that $P_2$ converges to $0$ as $t\rightarrow\infty$.
%
%
%
%
Therefore
\begin{equation}\label{25}
V'(t)-Q(t)=P(t)\rightarrow0~\mbox{as~}t\rightarrow\infty.
\end{equation}
Since $Q(t)\geq0$, it follows that $\liminf\limits_{t\rightarrow\infty}V'(t)\geq0$. Note that $\liminf\limits_{t\rightarrow\infty}V'(t)>0$ implies $V(t)\rightarrow\infty$ as $t\rightarrow\infty$, which is a contradiction. Thus there exists a sequence $\{t_n\}$ with $t_n\rightarrow\infty$ such that  $V'(t_n)\rightarrow0$. Combining this and \eqref{25}, we have
\begin{equation}\label{28}
  \lim\limits_{t_n\rightarrow\infty}Q(t_n)=0.
\end{equation}
Thus there is a subsequence of $\{t_n\}$, denote by $\{t_n'\}$ such that $w(t_n',x)$ converges to a limit function $w_{\infty}$ in the $L^{\infty}$ norm. From this and \eqref{28}, for any finite interval $I$,
\begin{equation*}
  \int_I (J* w-w-f(w))^2|_{t=t_n'}\,{\dif} x\rightarrow \int_I (J* w_{\infty}-w_{\infty}-f(w_{\infty}))^2\,{\dif} x=0.
\end{equation*}
Thus $w_{\infty}$ is a stationary wave of \eqref{1}:
\begin{equation}
  J* w_{\infty}-w_{\infty}-f(w_{\infty})=0.
\end{equation}
Note that $w_{\infty}(0)=\lim\limits_{t_n'\rightarrow\infty}w(t_n',0)=\lim\limits_{t_n'\rightarrow\infty}u(t_n',0)=\beta$. Since $w=u$, if $-t\leq x\leq 0$, then $u_{\infty}(x)=w_{\infty}(x)$ for $x<0$. The proof is completed by using the symmetrical argument for $x\geq0$.
%
%
%
%
%
%
%
%
\end{proof}

\begin{rem}\label{small}
If the monotonicity assumption $1+f'(s)>0$~for all $s\in(0,1)$ in \textbf{(H2)} is violated, $u_\infty$ might be discontinuous. See Section 3 in \cite{bates1997traveling} for more details.
\end{rem}

The following proposition shows the sharpness of transition between extinction and propagation.

\begin{prop}\label{prop8}
There is only one element in $T$.
\end{prop}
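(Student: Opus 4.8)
The plan is to argue by contradiction. Recall from the proof of Proposition~\ref{prop3} that $u_*^L:=\lim_{t\to\infty}u^L(t,0)$ is well defined and, by Lemmas~\ref{com} and \ref{prop2}, non-decreasing in $L$; by Propositions~\ref{prop3} and \ref{prop4}, $M_0=\{L:u_*^L<\beta\}$ and $M_1=\{L:u_*^L>\beta\}$, so $T=\{L>0:u_*^L=\beta\}$. Since $M_0$ is open and nonempty (Propositions~\ref{prop3}, \ref{prop7}) and $M_1$ is open and nonempty (Proposition~\ref{prop5} together with Lemma~\ref{lem4} applied to large $L$), we have $M_0=(0,L^0)$, $M_1=(L^1,\infty)$ with $0<L^0\le L^1<\infty$, and since $L^0\notin M_0\cup M_1$, monotonicity of $u_*^L$ gives $T=[L^0,L^1]\neq\emptyset$. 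Thus it suffices to prove $L^0=L^1$, so suppose $\rho:=L^1-L^0>0$.

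The first step is a translation–comparison argument. For $|a|\le\rho$ one has $1_{[-L^0,L^0]}(\cdot-a)\le 1_{[-L^1,L^1]}$, so by translation invariance of \eqref{57} and Lemma~\ref{com}, $u^{L^0}(t,x-a)\le u^{L^1}(t,x)$ for all $t>0$, $x\in\mathbb{R}$ and $|a|\le\rho$. Since $u^{L^0}(t,\cdot)$ is even and non-increasing in $|x|$ (Lemma~\ref{prop1}), taking the supremum over $|a|\le\rho$ gives $u^{L^1}(t,x)\ge u^{L^0}(t,0)$ for $|x|\le\rho$ and all $t>0$. As $L^0\in T$, $u^{L^0}(t,0)\to\beta$; and as $L^1\in T$, Proposition~\ref{prop6} yields $t_n\to\infty$ with $u^{L^1}(t_n,\cdot)\to U$ in $L^{\infty}_{loc}(\mathbb{R})$, where $U$ solves $J*U-U-f(U)=0$, $U(0)=\beta$, and $U$ is even and non-increasing in $|x|$ (Lemma~\ref{prop1}). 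Letting $t_n\to\infty$ in the previous inequality gives $U(x)\ge\beta$ for $|x|\le\rho$, while $U\le U(0)=\beta$ everywhere; hence $U\equiv\beta$ on $[-\rho,\rho]$.

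The crux is to show that no such $U$ exists: a nonconstant, even, $|x|$-monotone stationary solution of \eqref{1} cannot be constant $\equiv\beta$ on an interval of positive length. Here \textbf{(H2)} is essential: since $1+f'(s)>0$, the map $\Phi(s):=s+f(s)$ is a strictly increasing $C^1$ bijection, and since $J\in C^1$ with $J'\in L^1$ the convolution $J*U$ is $C^1$, so $U=\Phi^{-1}(J*U)\in C^1$ (the regularity that fails when the monotonicity hypothesis is dropped, cf.\ Remark~\ref{small}). Put $x_*:=\sup\{r\ge0:U\equiv\beta\ \text{on}\ [-r,r]\}$, which is finite because $U\not\equiv\beta$, and $\ge\rho>0$; monotonicity gives $U<\beta$ on $(x_*,\infty)$, so $U'$ is odd, vanishes on $[-x_*,x_*]$, and is $\le0$ but not identically $0$ on $(x_*,\infty)$. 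Differentiating the stationary equation yields $J*U'=\Phi'(U)U'$, which vanishes identically on $[-x_*,x_*]$; using the oddness of $U'$ this forces $\int_{x_*}^{\infty}\big(J(y-x)-J(y+x)\big)U'(y)\,dy=0$ for every $x\in[0,x_*]$, and combining this with the sign of $U'$ and the tail decay of $U$ (in the spirit of Lemma~\ref{lem9}) propagates the flatness outward, giving $U'\equiv0$ on $(x_*,\infty)$, i.e.\ $U\equiv\beta$ — a contradiction. Hence $L^0=L^1$ and $T$ is a single point.

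I expect this last rigidity step to be the main obstacle. In the local equation $U''=f(U)$, flatness at level $\beta$ is instantly absurd ($0=U''=f(\beta)\ne0$), but here $J*U$ on an interval still feels the values of $U$ elsewhere, so the contradiction must be squeezed out of the differentiated equation together with evenness, monotonicity in $|x|$, and decay at infinity. (If the flat region is large one can finish more cheaply: $U\equiv\beta>\alpha$ on $[-\rho,\rho]$ implies $u^{L^1}(t,\cdot)>\alpha+\bar\alpha$ on a long interval for large $t$, whence Lemma~\ref{lem4} forces $u^{L^1}\to1$ locally uniformly, contradicting $L^1\in T$; reducing the general case to this one is again governed by the rigidity above.)
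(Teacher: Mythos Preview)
Your translation--comparison setup is exactly the paper's: suppose $L_1<L_2$ lie in $T$, shift $1_{[-L_1,L_1]}$ by $\varepsilon$ with $|\varepsilon|<\rho:=L_2-L_1$, compare with $u^{L_2}$, and conclude $u_\infty(\varepsilon)\ge\beta=u_\infty(0)$ for the subsequential limit $u_\infty$ of $u^{L_2}$. The paper stops at this line and simply declares that it ``contradicts Lemmas~\ref{max} and~\ref{com}'' --- presumably invoking (without saying so) the strict monotonicity in $|x|$ that those lemmas yield via Lemma~\ref{prop1}. You have in fact gone further than the paper in making explicit that, combined with $U(\varepsilon)\le U(0)$, this forces $U\equiv\beta$ on $[-\rho,\rho]$, and in recognising that a separate argument is then needed.

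The gap is exactly where you say it is. The rigidity claim --- that a nonconstant, even, $|x|$-monotone stationary solution cannot be constant on a nondegenerate interval --- does not follow from the identity $\int_{x_*}^{\infty}\big(J(y-x)-J(y+x)\big)U'(y)\,dy=0$ for $x\in[0,x_*]$ together with $U'\le0$ on $(x_*,\infty)$: under \textbf{(H1)} the kernel $J$ need not be monotone on $(0,\infty)$, so the bracket $J(y-x)-J(y+x)$ has no definite sign and nothing forces $U'\equiv0$ beyond $x_*$. The appeal to ``tail decay in the spirit of Lemma~\ref{lem9}'' is misplaced, since that lemma concerns the travelling front $(U,c)$ with $c<0$, not the stationary profile here. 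Your fallback via Lemma~\ref{lem4} requires $\rho\ge\bar L$ with $\bar L$ fixed by that lemma, and you supply no mechanism to enlarge $\rho$, so ``reducing the general case to this one'' is not actually carried out. In short: the approach matches the paper's, your diagnosis of the crux is correct, but the proof is incomplete at precisely the step you yourself flagged.
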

\begin{proof}
Assume, to the contrary, that $L_1<L_2$ are both in $T$. Let $u_i$ be the solution of \eqref{57} with $u_i(0,x)=1_{(-L_i,L_i)}(x)$, $i=1, 2$. We then have $\lim\limits_{t\rightarrow\infty}u_1(t,0)=\beta$. Note that the equation \eqref{57} is translation invariant, thus we have $\bar{u}(t,\varepsilon)\rightarrow\beta$ as $t\rightarrow\infty$ when $\bar{u}$ solves \eqref{57} with initial condition $\bar{u}(0,x)=1_{(-L_1+\varepsilon,L_1+\varepsilon)}(x)$. But if $|\varepsilon|<L_2-L_1$, then $\bar{u}(0,x)<u_2(0,x)$ and by the comparison principle,
\[
  u_{\infty}(0)=\beta=\lim\limits_{t\rightarrow\infty}\bar{u}(t,\varepsilon)\leq \lim\limits_{t\rightarrow\infty}u_2(t,\varepsilon)=u_{\infty}(\varepsilon),
\]
which contradicts Lemmas \ref{max} and \ref{com}.
\end{proof}

\begin{proof}[Proof of Theorem 1.1]
The first conclusion can be proved by combining the proofs of Propositions \ref{prop3} and \ref{prop7}. For the third conclusion, it can
be proved using Propositions \ref{prop4} and \ref{prop5}. Finally, the second conclusion is straightforward combining Propositions \ref{prop6} and \ref{prop8}.
\end{proof}

\section{Numerical Results}
In this section, we present a numerical example to illustrate the results of Theorem \ref{main}. Set $f(u)=u(u-\alpha)(u-1)$ and $J(x)=\frac{1}{\sqrt{2\pi}}e^{-\frac{1}{2}x^2}$. It is easy to verify that $J$ satisfies \textbf{(H1)}, and $f$ satisfies \textbf{(H2)} and \textbf{(H3)} (we have by a simple calculation that $\beta=\frac{1}{3}(-\sqrt{2(2\alpha-1)(\alpha-2)}+2\alpha+2)$. Consider a two dimensional region $R=[0,T]\times[-X,X]$ with $X=120$ and $T=200$. We obtain the following uniform grid by choosing $M=48000$ and $N=400$:
  \begin{align}
     & x_i=-X+i\triangle x~\mbox{for}~i=0,\ldots,M, \\
     & t_k=k\triangle t~\mbox{for}~k=0,\ldots,N,
  \end{align}
where $\triangle x=\frac{2X}{M}$ and $\triangle t=\frac{T}{N}$. We shall use the convention of using $u_i^k$ to denote $u(t_k,x_i)$, where
$i=0,\ldots,M$, $k=0,\ldots,N$. As in \cite{bates2009numerical}, we use a standard finite difference scheme as follows:
\begin{equation}
  u_i^0=1_{[-L,L]}(x_i)~\mbox{for}~i=0,\ldots,M,
\end{equation}
\begin{equation}
  \frac{u_i^{k+1}-u_i^{k}}{\triangle t}=(J*u^k)_i-u_i^k-f(u_i^k)~\mbox{for}~0\leq i\leq M,~0\leq k\leq N-1,
\end{equation}
where
\begin{equation}
  (J*u^k)_i=\triangle x\bigg[\frac{1}{2}J(x_0-x_i)u_0^k+\sum\limits_{m=1}^{M-1}J(x_m-x_i)u_m^k+\frac{1}{2}J(x_{M}-x_i)u_{M}^k\bigg].
\end{equation}
Set $\alpha=0.4$ (in this case $\beta=\frac{2}{3}$). In Figures 1 and 2 we present the evolution of $u$ when $L=1.605$ and $L=1.610$, respectively. In Figures 3 and 4 we plot $u(t,0)$ for $0\leq t\leq 200$. As can be seen in figures 1-4, the numerical results are in accordance with our theoretical analyses in Lemma \ref{prop1} and Lemma \ref{prop2}. In both cases, $u(t,x)=u(t,-x)$ for all $(t,x)\in R$ and $u$ is decreasing in $|x|$.  Moreover, when $L=1.605$, $u(t,0)$ as a function of $t$ is non-increasing on $[0,\infty)$ ($t^*=\infty$ in Lemma \ref{prop2}) and there exists some bounded $t^*$ such that $u(t,0)$ is  non-increasing on $[0,t^*)$ and non-decreasing on $[t^*,\infty)$ when $L=1.610$. As predicted by Theorem \ref{main}, there exists some threshold value $L^*\in(1.605,1.610)$ such that, if $L<L^*$, $u\rightarrow0$ as $t\rightarrow\infty$; and if $L>L^*$, $u\rightarrow1$ on compacts as $t\rightarrow\infty$.

\begin{figure}[htbp]
\centering
\begin{minipage}[t]{0.48\textwidth}
\centering
\includegraphics[width=\textwidth]{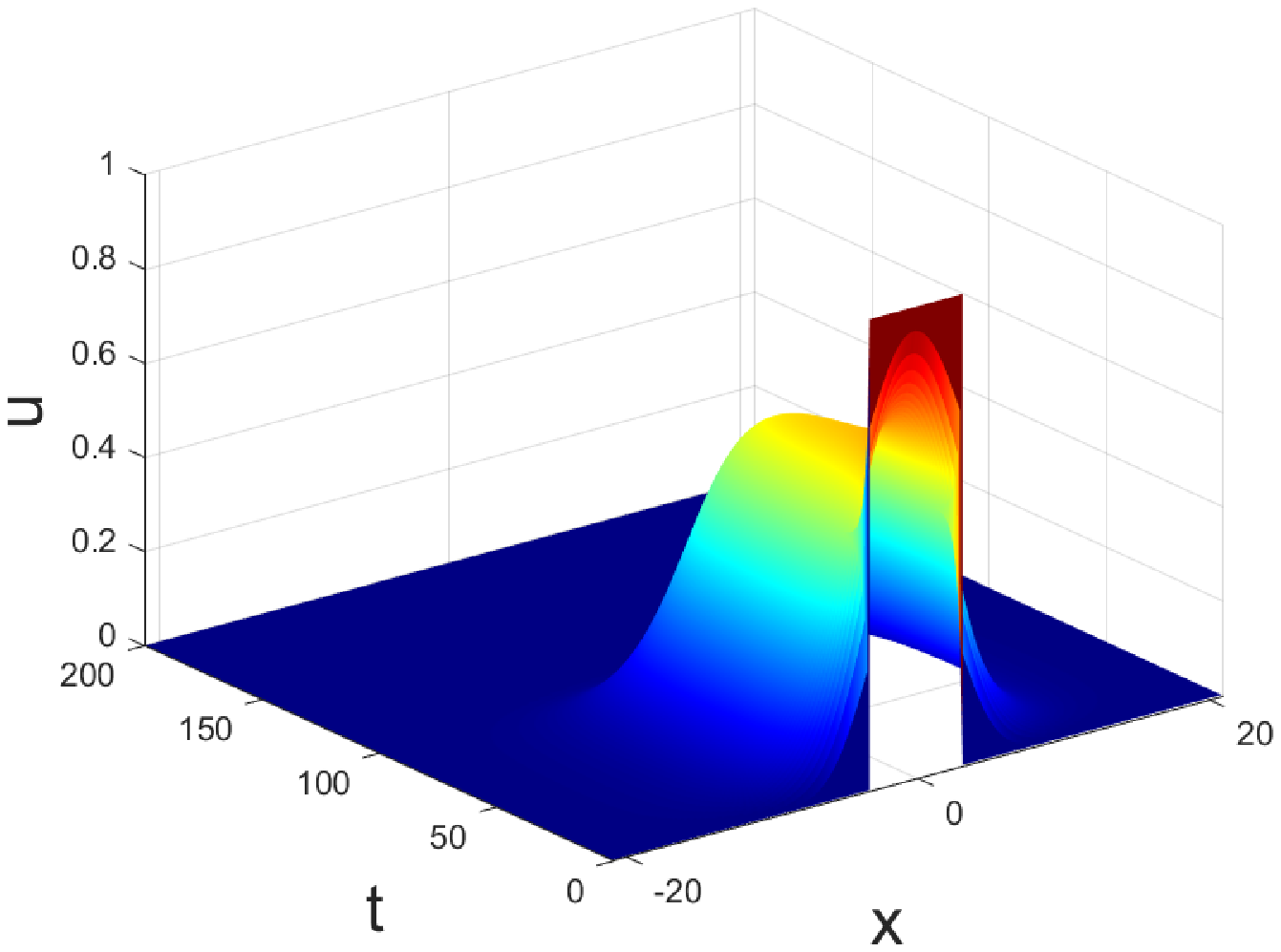}
\caption{The numerical solution when L=1.605.}\label{f1}
\end{minipage}
\begin{minipage}[t]{0.48\textwidth}
\centering
  \includegraphics[width=\textwidth]{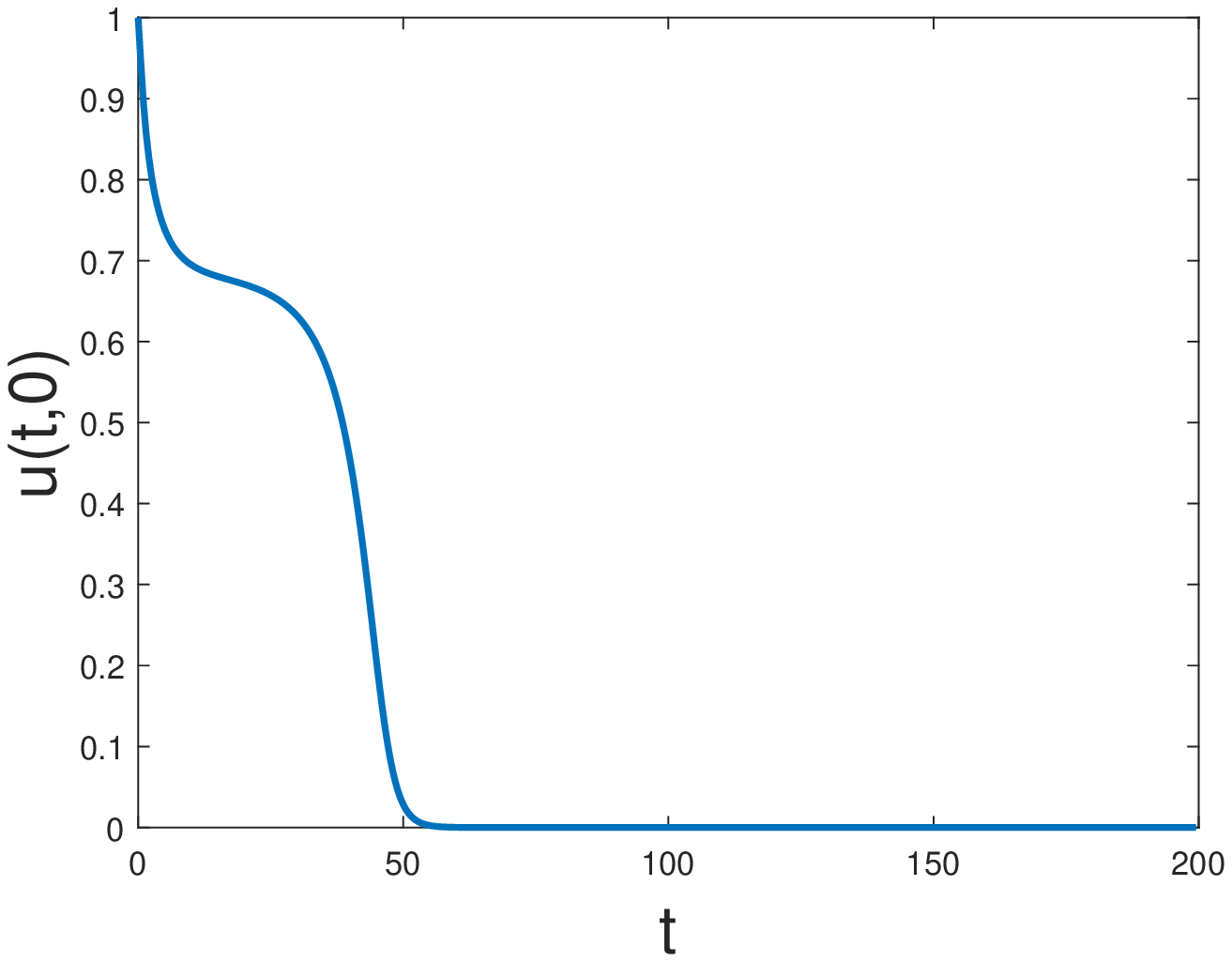}
  \caption{Plot of $u(t,0)$ when L=1.605.}\label{f2}
\end{minipage}
\end{figure}

\begin{figure}[htbp]
\centering
\begin{minipage}[t]{0.48\textwidth}
\centering
  \includegraphics[width=\textwidth]{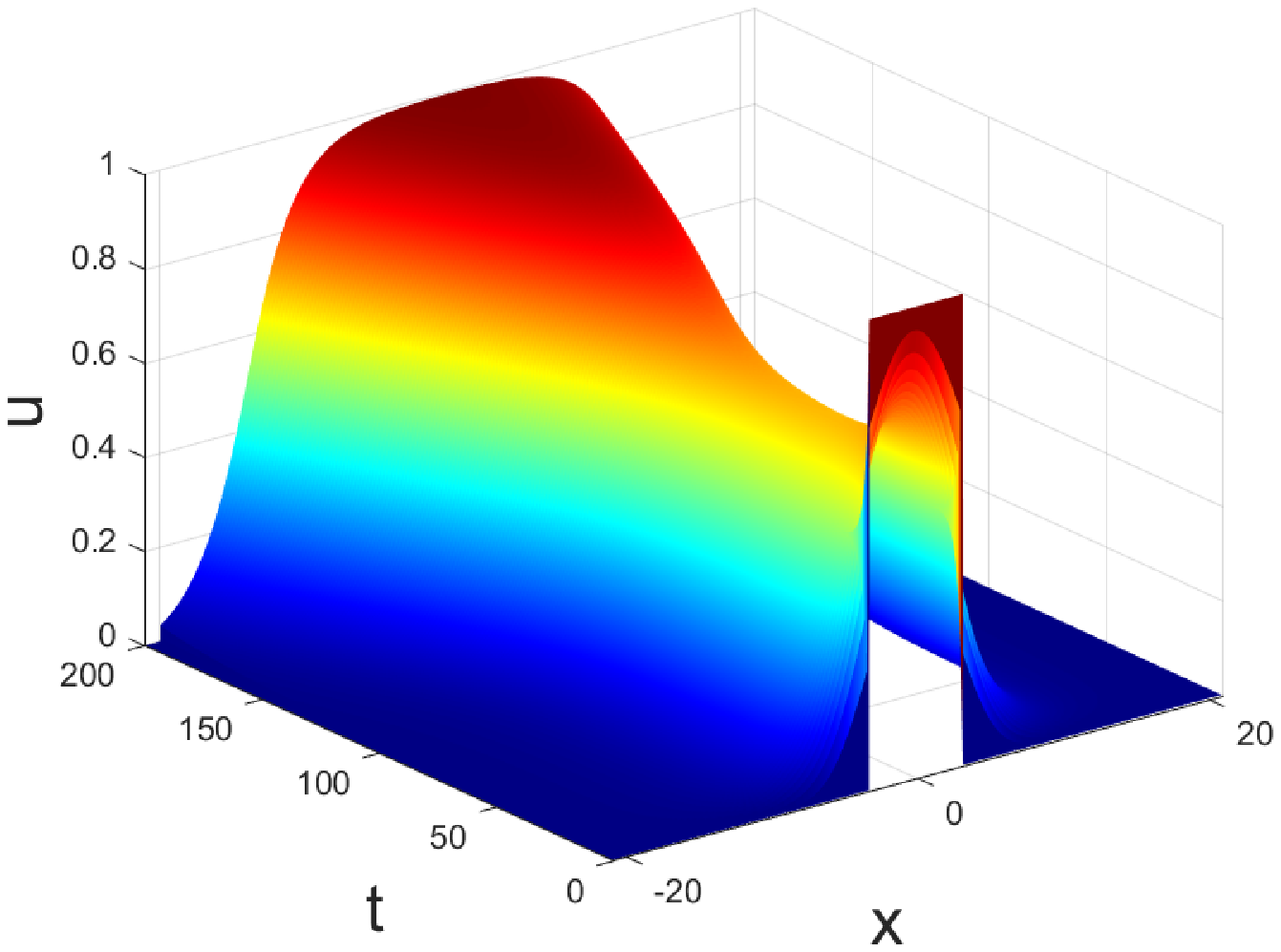}
  \caption{The numerical solution when L=1.610.}\label{f3}
\end{minipage}
\begin{minipage}[t]{0.48\textwidth}
\centering
  \includegraphics[width=\textwidth]{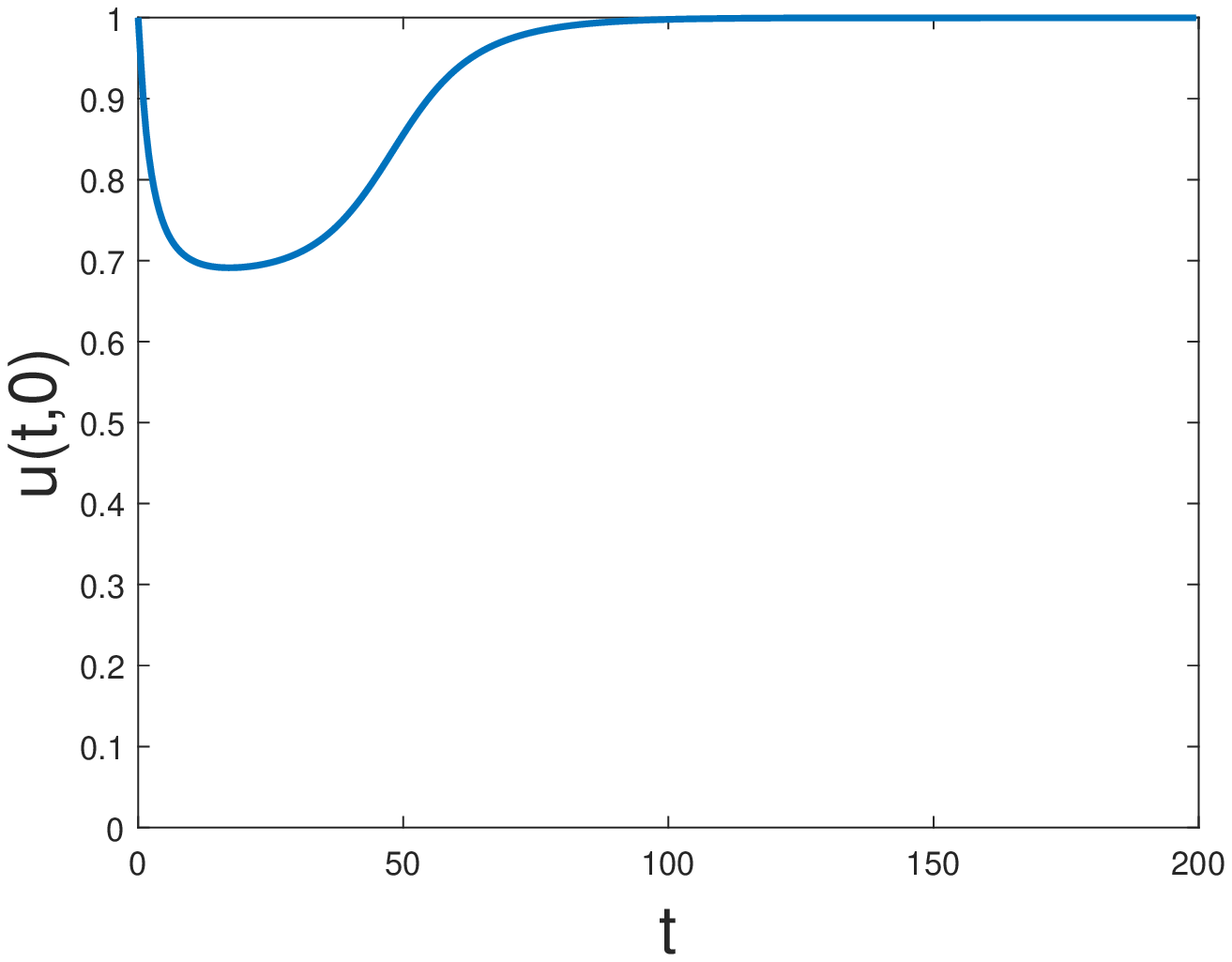}
  \caption{Plot of $u(t,0)$ when L=1.610.}\label{f4}
\end{minipage}
\end{figure}

%
%

%


\section{Concluding remarks}
In this work, we have investigated the asymptotic behaviors of solutions to nonlocal reaction diffusion equations with a one-parameter family of monotonically increasing and compactly supported initial data. We proved in Theorem \ref{main} that the solutions may either propagate (converging to 1 uniformly on compacts), become extinct (converging to 0) or converge to a nontrivial stationary wave. In particular, we have confirmed that the transition from propagation to extinction is sharp: there is only one threshold value. A few remarks are worth making to discuss possible interesting extension of the work done in this paper.

1.~Suppose that the operator $J*u-u$ in \eqref{1} is scaled by a small coefficient $\epsilon$, i.e.,
\begin{equation*}
   u_t=\epsilon(J*u-u)-f(u),~x\in\mathbb{R},~t\geq0.
\end{equation*}
Then the monotonicity assumption in \textbf{(H2)} is replaced by $\epsilon+f'(s)>0$ for $s\in(0,1)$. Thus the conclusions of Theorem \ref{main} remain true for $\epsilon$ large enough such that $\epsilon>\max_{s\in[0,1]}(-f'(s))$. The analysis for smaller $\epsilon$, however, is a much more involved task. In this case, there may exist discontinuous travelling fronts with zero speed even when $\int_0^1 f(s)\,{\dif} s\neq0$. See \eqref{64}, Remark \ref{small}, Section 3 in \cite{bates1997traveling}, and Section 5 in \cite{chen1997} for more details.

2.~In this work we have only investigated the one-dimensional equations. A natural followup question is whether similar phenomena occur in high-dimensional spaces. We expect to tackle this problem by using the comparison principle for multidimensional nonlocal reaction diffusion equations established in \cite{chen2003uniform}.

3.~For the sake of simplicity, we present the results only for a particular family of initial data $u(0,x)=1_{[-L,L]}(x).$ There is no essential difficulty with other monotone one-parameter families of continuous and bounded initial data with compact support. This has been worked out by Du and Matano \cite{du2010convergence} for one-dimensional autonomous reaction diffusion equations, and then by Pol{\'a}\v{c}ik \cite{polacik2011threshold} for multidimensional nonautonomous reaction diffusion equations.

%

\section*{Acknowledgments}
This work was completed and submitted in early 2017. Recently, we received the preprint (arXiv:2201.01512v1) by Dr. Matthieu Alfaro and colleagues, which partially overlap with ours. This work was supported in part by NSFC Grant 12071175, 11171132, 11571065 and National Research Program of China Grant 2013CB834100, Natural Science Foundation of Jilin Province (20200201253JC,  2019\\02013020JC), and Project of Science and Technology Development of Jilin Province, China (2017C028-1).

\bibliography{bibnl}
\nocite{*}
\bibliographystyle{plain}

\end{document}